\documentclass[a4paper,11pt,reqno]{amsart}
\usepackage{amssymb,amsmath,amsthm,amsfonts,mathtools,lmodern,mathrsfs,thmtools}

\usepackage[T1]{fontenc}
\usepackage[utf8]{inputenc}
\usepackage[english]{babel}
\usepackage[top=3cm,bottom=2.5cm,left=2.5cm,right=2.5cm]{geometry}
\usepackage{graphicx}
\usepackage{braket}
\usepackage[autostyle]{csquotes}
\usepackage[hypertexnames=false]{hyperref}
\hypersetup{
    hidelinks,
    colorlinks,
    linkcolor={red!50!black},
    citecolor={blue!50!black},
    urlcolor={blue!80!black},
}
\usepackage{enumerate}
\usepackage{xcolor}
\usepackage{thmtools}   
\usepackage[noabbrev,capitalize,nameinlink]{cleveref}

\usepackage{tikz-cd}

\theoremstyle{plain}
\newtheorem{theorem}{Theorem}[section]
\newtheorem{lemma}[theorem]{Lemma}
\newtheorem{corollary}[theorem]{Corollary}

\newtheorem{proposition}[theorem]{Proposition}

\theoremstyle{definition}
\newtheorem{definition}[theorem]{Definition}
\newtheorem{remark}[theorem]{Remark}

\newcommand{\R}{\mathbb R}
\newcommand{\Ran}{\mathbb R_{\mathrm{an}}}
\newcommand{\Ranexp}{\mathbb R_{\mathrm{an,exp}}}
\newcommand{\N}{\mathbb N}

\newcommand{\Z}{\mathbb Z}

\newcommand{\g}{\mathfrak{g}}

\newcommand{\spn}{\mathrm{span}}
\newcommand{\id}{e}
\newcommand{\de}{\mathrm{d}}
\newcommand{\End}{\mathrm{End}}
\newcommand{\Ad}{\mathrm{Ad}}
\newcommand{\ad}{\mathrm{ad}}
\newcommand{\abn}{\mathrm{Abn}}
\newcommand{\Lie}{\mathrm{Lie}}

\newcommand{\ddt}{\frac{\de}{\de t}}

\definecolor{deepgreen}{rgb}{0,0.6,0}

\author[E. Le Donne, A. Lerario, L. Nalon, N. Paddeu, L. Rizzi]{Enrico Le Donne, Antonio Lerario, Luca Nalon, Nicola Paddeu, Luca Rizzi}

 \address[Le Donne]{D\'epartement de Math\'ematiques, Ch. du mus\'ee 23, 1700 Fribourg (CH)}
 \email[Le Donne]{enrico.ledonne@unifr.ch}
 
 \address[Lerario]{SISSA, Via Bonomea 265, 34136 Trieste (IT)}
 \email[Lerario]{lerario@sissa.it}

 \address[Nalon]{SISSA, Via Bonomea 265, 34136 Trieste (IT)}
 \email[Nalon]{lnalon@sissa.it}

 \address[Paddeu]{D\'epartement de Math\'ematiques, Ch. du mus\'ee 23, 1700 Fribourg (CH)}
 \email[Paddeu]{nicola.paddeu@unifr.ch}

 \address[Rizzi]{SISSA, Via Bonomea 265, 34136 Trieste (IT)}
 \email[Rizzi]{lrizzi@sissa.it}

\thanks{E.~Le Donne, L.~Nalon, and N.~Paddeu were partially supported by the Swiss National Science Foundation 	(grant 200021-204501 `\emph{Regularity of sub-Riemannian geodesics and applications}'). L.~Nalon was partially supported by the Swiss National Science Foundation Postdoc.~Mobility Fellowship	(project number P500-2235462 `\emph{Lie groups of polynomial growth}'). L.~Rizzi was supported by the European Research Council (ERC) under the European Union’s Horizon 2020 research and innovation programme (grant agreement GEOSUB, No. 945655), and by the IRP project GEOSUBMAN by INSMI-CNRS. The authors also acknowledge the INdAM support.}

\begin{document}
	
	\title[Sard property in rank $2$ metabelian groups]{Sard property for rank 2 polarizations \\ in metabelian Lie groups}
	
	\begin{abstract} We provide sharp bounds on the dimension of the abnormal set for rank $2$ polarizations on metabelian Lie groups, establishing the Sard property for the end-point map of such groups. The proof is based on a novel approach that makes essential use of tools from tame geometry. We also obtain bounds for the dimension of the Goh-abnormal set for metabelian Lie groups where the codimension of the derived subgroup is at most $2$, with no assumption on the rank of the polarization. We thus infer that these polarized groups, equipped with sub-Riemannian structures, satisfy the minimizing Sard property.
	\end{abstract}
	\maketitle
	\tableofcontents
    
\section{Introduction}
Lie groups equipped with left-invariant geodesic distance functions are a classic object of study in both control theory and geometric group theory. By a result of Berestovskii \cite{MR995016}, all rectifiable curves in such a metric Lie group $G$ are tangent to a common sub-bundle 
\begin{equation*}
    \Delta = \bigcup_{g \in G} \left(\de L_g \right)_\id V
\end{equation*}
of the tangent bundle, for some bracket-generating subspace $V$ of the Lie algebra $\g = T_\id G$. Following the terminology of Gromov \cite[Section~0.1]{MR1421823}, we shall call the pair $(G,V)$ a \emph{polarized group} and we define its end-point map as
\begin{equation}  
    \End \colon L^1([0,1],V) \to G, \quad u \mapsto \gamma_u(1),
\end{equation}
where $\gamma_u$ is the absolutely continuous curve starting at the identity element $e \in G$ with derivative $\dot{\gamma}_u(t)=(\de L_{\gamma_u(t)})_\id u(t)$. If $u$ is a critical point of the end-point map we say that $\gamma_u$ is an \emph{abnormal curve}. The set of critical values of the end-point map is called the \emph{abnormal set} and it is denoted by $\abn(G,V)$. If it has zero measure, we say that $(G,V)$ satisfies the \emph{Sard property}.  Establishing the Sard property in polarized groups, and more generally in polarized manifolds, is a central open problem of sub-Riemannian geometry, \cite[Sec.~10.2]{MR1867362}, \cite[Prob.~ III]{A-openproblems}, \cite[Conj.~2]{RT-MorseSard}, with implications on the measure-theoretical properties of sub-Riemannian manifolds and geometric group theory, see e.g.\ \cite{MR3153949,MR4245620,MR5038044,borza2025failuremeasurecontractionproperty}. Despite the vast effort in the investigation of the Sard property, only partial results are known, see \cite{ZZ-Rigid,MR3352244, MR3569245,MR3981990,MR4104464,MR4524416,lerario2026sardpropertiespolynomialmaps,R-subdiff,BR-DukeMartinet,BFPR-StrongSardInventiones,BPR-Sardpreprint1,BPR-Sardpreprint2}.

Our main contribution is a sharp rectifiability result for the abnormal set in a class of metabelian (i.e., 2-step solvable) Lie groups, see \cref{def:metabelian}. Throughout the paper, $m$-rectifiability is defined  as in \cite[Definition~15.3]{MR1333890}, and it is understood in local charts. For convenience, we consider the empty set to be $m$-rectifiable, for every $m \in \Z$.
\begin{theorem} \label[theorem]{theo1_intro}
Let $(G,V)$ be a polarized Lie group such that $G$ is metabelian and $\dim(V)=2$. Then the abnormal set $\abn(G,V)$ is  $(\dim(G)-3)$-rectifiable.
In particular, $(G,V)$ satisfies the Sard property.
\end{theorem}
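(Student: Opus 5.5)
We aim to realise $\abn(G,V)$ as the image of an explicit finite-dimensional family of curves, parametrised by a set of dimension at most $\dim(G)-3$, through a map that is definable in an o-minimal structure. Rectifiability then follows from tame-geometry results: definable sets have well-defined dimension and admit finite $C^1$ cell decompositions, so a definable set of dimension $k$ is $k$-rectifiable.

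\textbf{Step 1: structure of abnormal controls.} Write $\g=\mathfrak a\oplus\mathfrak w$, where $\mathfrak n=[\g,\g]$ is abelian and $\mathfrak n\subseteq\mathfrak a$. Let $V=\spn\{X_1,X_2\}$, so a control is $u=u_1X_1+u_2X_2$. A curve $\gamma_u$ is abnormal if and only if there is a nonzero covector $\lambda(t)=\Ad^*_{\gamma_u(t)}\lambda_0$ annihilating $V$ for all $t$. Since $V$ has rank $2$, differentiating the conditions $\langle\lambda(t),X_1\rangle=\langle\lambda(t),X_2\rangle=0$ gives $u_2\langle\lambda,[X_1,X_2]\rangle=0$ and $u_1\langle\lambda,[X_1,X_2]\rangle=0$. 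Hence $\lambda(t)$ also annihilates $[X_1,X_2]$ wherever $u\neq 0$, and one can keep differentiating along the flow. In a metabelian group $\Ad_g$ acts on $\mathfrak n$ through the abelian quotient $G/N$. This action is explicit: it is given by the exponentials of the commuting operators $\ad X$ restricted to $\mathfrak n$, with $X$ ranging over a complement of $\mathfrak n$. We use this to show that the annihilator conditions force the projection of $\gamma_u$ to the abelian quotient $G/N$ (for the first-layer variables) to lie on a one-dimensional set. That set is a line, or a level set of exponential-polynomial functions. In the degenerate case, $\lambda_0$ lies in the annihilator of an $\Ad$-invariant proper subalgebra, and the curve lies in a proper subgroup $H$ with $\dim H\le \dim G-1$. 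That case is handled by induction on $\dim G$, applied to the smaller metabelian polarized group, or by a direct dimension count.

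\textbf{Step 2: parametrising the endpoints.} In exponential coordinates of the second kind, $G\cong\R^{k}\times\mathfrak n$ with group law $(x,n)(x',n')=(x+x',\,n+e^{\ad x}n')$ (up to the non-abelian correction in the first factor). The endpoint of an abnormal curve is then $\big(x(1),\,\int_0^1 e^{\ad x(t)}\,\de x_V(t)\big)$, where $x(t)$ moves along the one-dimensional locus found in Step 1. We reparametrise by the arclength of $x(t)$. The $\mathfrak n$-component becomes a linear combination of integrals $\int e^{\mu s}s^j\,\de s$ against the controls. Because $u$ is essentially restricted to one direction, with a sign change allowed, the reachable endpoints form a set depending on boundedly many parameters. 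These are the $\lambda_0$ modulo scaling and the constraint equations, together with the endpoint parameter along the curve. The resulting map is definable in $\Ranexp$.

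\textbf{Step 3: the dimension count, which is the main obstacle.} We must show that the parameter space has dimension at most $\dim(G)-3$. The condition that $\lambda$ annihilates $V$ and $[X_1,X_2]$ along the whole curve imposes at least three independent linear conditions on $\lambda_0$. This cuts the admissible covectors to a projective space of dimension at most $\dim G-4$, and the one-parameter freedom along the curve brings the total to at most $\dim G-3$. The difficulty is that the number of free parameters interacts with how many derivatives $\ad^jX_i$ of the constraints are independent; this is governed by the spectral data (eigenvalues and Jordan blocks) of $\ad X_1,\ad X_2$ on $\mathfrak n$. We would first try a stratification by the Jordan type and by the dimension $d$ of the $\ad$-invariant span of $\lambda_0$-relevant vectors. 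On each stratum, each additional free parameter should be matched by an additional independent constraint, giving dimension at most $\dim G-3$ uniformly. Since the stratification is finite and each piece is definable in $\Ranexp$, we can take the union and apply the o-minimal rectifiability of definable sets of dimension at most $\dim(G)-3$. The Sard property then follows because $\dim(G)-3<\dim(G)$.
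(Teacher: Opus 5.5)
\textbf{There is a genuine gap in your Step 2.}

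Your outline shares the paper's skeleton. Via the Goh condition, $\lambda$ ranges over the projectivisation of $\{X_1,X_2,[X_1,X_2]\}^\perp$, which has dimension $\dim(G)-4$. Since $Y=[X_1,X_2]\in\g'$ and $\Ad$ acts on $\g'$ through $G/G'$, $P^\lambda_Y$ depends only on the $B$-component. Hence $b(t)$ stays in the zero set $Z^\lambda_r$ of a nonzero analytic (indeed exponential-polynomial) function on $B\cong\R^2$, and the count is $\dim\Lambda+1$.

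The gap is your assertion that, because $u$ is ``essentially restricted to one direction, with a sign change allowed'', the endpoints depend on boundedly many parameters. Knowing that $b$ lies in a one-dimensional set does not make the space of such curves finite-dimensional. The path $b$ is an arbitrary absolutely continuous path in $Z^\lambda_r$, which is in general a finite graph with singular vertices and cycles. It may oscillate infinitely often, switch branches at vertices (possibly infinitely many times, accumulating), and wind around cycles.

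What you need is that the endpoint of the multiplicative integral of $b$ depends only on the homotopy class of $b$ rel endpoints in $Z^\lambda_r$. The paper gets this from tree-like paths: a null-homotopic loop in a finite graph is tree-like, and the multiplicative integral of a tree-like loop is trivial (\cref{trivial_integral}, \cref{prop:homotopy}). Since $\pi_1$ of a finite graph is finitely generated free, every endpoint then has the form $g^{\lambda,\mu_1}\cdots g^{\lambda,\mu_{\ell-1}}\gamma^{\lambda,\mu_\ell}(t)$ for a finite word in the edge-arcs. With cycles present the answer is a genuinely \emph{countable} union: winding $n$ times around a loop gives, in general, distinct endpoints. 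So ``boundedly many parameters'' is false as stated, although countability suffices for rectifiability.

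\textbf{Further ingredients that are missing or mis-stated.}

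First, the combinatorial type of $Z^\lambda_r$ changes with $\lambda$. To get edge-arcs depending definably on $\lambda$, the paper applies Hardt's definable triviality (\cref{def_triviality}) to $E_r\to\Lambda$, compatibly with $\{b=0\}$. This gives finitely many pieces $\Lambda_{i,r}$, over each of which the fibre is a fixed graph $F_{i,r}$.

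Second, your justification of $\Ranexp$-definability is wrong. The locus is not a line, so the $\g'$-component is not a combination of integrals $\int e^{\mu s}s^j\,\de s$. It is $\int_0^t\rho(\ad_{b(\lambda,\tau)})\,\dot b(\lambda,\tau)\,\de\tau$ along a curved $\Ran$-definable arc. O-minimal structures are not closed under parametric integration in general, and the paper needs the Cluckers--Miller theorem (\cref{CM_integration}) to land in $\Ranexp$.

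Third, what you call the main obstacle in Step 3 is not one. Once the homotopy reduction is in place, each family is parametrised by $\Lambda_{i,r}\times[0,1]$, of dimension at most $\dim(G)-3$. No stratification by the Jordan type of $\ad X_i|_{\g'}$ is required.

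Finally, your induction on a proper subgroup $H$ in the ``degenerate case'' is not available as stated. A horizontal curve in $H$ only uses the at most one-dimensional space $V\cap\mathfrak{h}$, which is not a rank-$2$ bracket-generating polarization of $H$. The paper instead treats $\dim(G/G')=1$ directly: a nonconstant analytic function of one variable forces $b\equiv 0$, so the curve lies in $\exp(V\cap\g')$. A separate argument covers $\dim V=\dim\g-1$.
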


The latter result is noteworthy even when restricted to the case of metabelian stratified groups of rank two, i.e., polarized groups $(G,V)$ where $V$ is a two-dimensional first layer of a stratification, see \cite[Section~9.2]{donne2024metricliegroupscarnotcaratheodory}. The Lie algebras of the groups covered by \cref{theo1_intro} include all quotients of the free-metabelian Lie algebra $\mathfrak{m}$ on two generators. For this algebra, the dimensions of the layers of the lower central series $\{\mathfrak{m}_k\}_{k \in \mathbb{N}}$ grow linearly, specifically $\dim(\mathfrak{m}_k/\mathfrak{m}_{k+1})=k$, see \cite[Lemma~3.1]{MR2351997}. Consequently, \cref{theo1_intro} provides the first proof of the Sard property for a class of stratified groups where the nilpotency step and the dimensions of the layers in their stratifications is simultaneously unbounded, cf. \cite{MR4104464,MR4524416}. The estimate of \cref{theo1_intro} is sharp as a consequence of the following.

\begin{proposition}\label{prop:free-metabelian-sharpness}
Let $(M_{2,s},V)$ be the free-metabelian stratified group of rank $2$ and step
$s\geq 2$. Then
$\abn(M_{2,s},V)$ contains a smooth embedded submanifold of
dimension $\dim(M_{2,s})-3$. Consequently, $\abn(M_{2,s},V)$ is not $(\dim(M_{2,s})-4)$-rectifiable.
\end{proposition}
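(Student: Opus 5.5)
The plan is to exhibit an explicit $(\dim(M_{2,s})-3)$-parameter family of abnormal curves and to show that their end-points sweep out an immersed, hence locally embedded, submanifold. For the non-rectifiability claim, note that a set containing an embedded $(n-3)$-dimensional submanifold, where $n=\dim(M_{2,s})$, has positive $\mathcal H^{n-3}$-measure. Every $(n-4)$-rectifiable set is $\mathcal H^{n-3}$-null, so the last sentence of \cref{prop:free-metabelian-sharpness} follows from the first.

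\emph{Structure and abnormality criterion.} Write $\mathfrak m=V\oplus[\mathfrak m,\mathfrak m]$ with $V=\spn\{X,Y\}$. The derived algebra is abelian and is a cyclic module over $\R[x,y]$, generated by $Z=[X,Y]$, where $x,y$ act by $\ad_X,\ad_Y$. In the free-metabelian case this module is the space $\R[x,y]_{\le s-2}$ of polynomials of degree at most $s-2$, so
\[
n=\dim(M_{2,s})=2+\tfrac{s(s-1)}{2}.
\]
Let $c=\pi\circ\gamma$ be the projection of a horizontal curve $\gamma$ to $G/[G,G]\cong\R^2$. The coordinates of $\gamma(1)$ in $[G,G]$ are the integrals $\int_c \omega_{p}$ of $1$-forms satisfying $d\omega_p=p\,dx\wedge dy$, one for each monomial $p$ of degree at most $s-2$. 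Equivalently, for closed $c$ they are the polynomial moments of the enclosed region.

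A covector $\lambda\in[\mathfrak m,\mathfrak m]^*$ is the same as a polynomial $P$ of degree at most $s-2$, via $\lambda(q(x,y)Z)=\langle P,q\rangle$. Computing $\lambda(\Ad_{\gamma(t)}X)$ and $\lambda(\Ad_{\gamma(t)}Y)$ in these coordinates gives the following abnormality criterion. If $c$ lies in the zero set $\{P=0\}$ of a nonzero polynomial $P$ of degree at most $s-2$, then $\lambda\circ\Ad_{\gamma(t)}$ annihilates $V$ for all $t$, so $\gamma$ is abnormal. This is the known criterion for rank-$2$ Carnot groups (Le Donne--Montgomery--Ottazzi--Pansu--Vittone), and I would re-verify it directly in the metabelian coordinates. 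In the degenerate case $s=2$ (Heisenberg) the abnormal set is the single point $\{e\}$, which has dimension $0=n-3$, so the statement holds trivially there.

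\emph{The family.} Fix $s\ge3$ and consider polynomials $P$ of degree at most $s-2$ with $P(0)=0$ and $\nabla P(0)\neq0$, taken up to scaling. This is a manifold of dimension $\frac{s(s-1)}2-2$. For each such $P$, let $c_P$ be the arc of $\{P=0\}$ starting at the origin, parametrized by arclength $\tau\in(0,\epsilon)$. Let $F(P,\tau)$ be the end-point of the horizontal lift of $c_P|_{[0,\tau]}$. The parameter count is
\[
\Big(\tfrac{s(s-1)}{2}-2\Big)+1=\tfrac{s(s-1)}{2}-1=n-3.
\]
By the criterion above, the image of $F$ lies in $\abn(M_{2,s},V)$. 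Since $F$ is smooth (even real-analytic), it remains to find one point where $dF$ has full rank $n-3$. An immersion restricts to an embedding on a small neighbourhood, which gives the desired submanifold.

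\emph{Main obstacle: the rank computation.} I expect the hard step to be checking that $dF$ is injective somewhere. I would pick a convenient base point, such as $P_0=y-\phi(x)$ with $\phi$ a generic polynomial of degree at most $s-2$ satisfying $\phi(0)=0$, with $\tau$ small but nonzero. Then I would compute the variations of the moments $\int_{c_P}\omega_p$.

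A variation $\delta P$ moves the arc by a normal displacement $-\delta P/|\nabla P|$. By Stokes, the first variation of $\int\omega_p$ is $\int_0^\tau p\,\delta\phi\,dx$ up to sign. Injectivity therefore reduces to the nondegeneracy of the pairing
\[
(p,\delta\phi)\mapsto \int_0^\tau p(x,\phi(x))\,\delta\phi(x,\phi(x))\,dx ,
\]
restricted to the admissible variations, together with the independent $\tau$-direction and the two horizontal coordinates. This is a Gram/moment-matrix argument: the functions $x\mapsto p(x,\phi(x))$ span a sufficiently large space of polynomials on $[0,\tau]$, and the $L^2$ Gram matrix of linearly independent functions is invertible.

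If the generic-graph choice turns out to make the bookkeeping awkward, the fallback is to check the rank at a single explicit example by induction on $s$. The idea is to use the quotient maps $M_{2,s}\to M_{2,s-1}$ and to show that the new top-degree directions are reached by varying the top-degree coefficients of $P$.
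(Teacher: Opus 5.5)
Your plan follows the paper's proof in outline: the same family (arcs through the origin of the zero sets of $P^\lambda_Y(0,\cdot)$, parametrized by $\lambda$ modulo scaling plus one time parameter), the same abnormality criterion, and the same kernel computation. In the kernel, the two horizontal coordinates force the endpoint displacement to vanish. This is what removes the boundary term from your Stokes formula, and it gives $\delta\tau=0$ once $\delta P$ is shown to be trivial. The components along the $Y_{\alpha,\beta}$ then give $\int p(x,\phi(x))\,\delta P(x,\phi(x))\,\de x=0$ over the arc, for every $p$ of degree at most $s-2$.

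The step you assert rather than prove is the ``linear independence'' in your Gram argument, and it is exactly where genericity enters. It closes in two lines. First, $\delta P$ itself has degree at most $s-2$, so $p=\delta P$ is an admissible test function. Hence $\delta P(x,\phi(x))\equiv 0$ on the arc, and therefore identically in $x$. Second, dividing in $\R[x][y]$ by $y-\phi(x)$, which is monic in $y$, gives $\delta P=(y-\phi(x))\,Q$. Since $\deg(y-\phi(x))=s-2$, $Q$ must be constant, i.e.\ $\delta P\in\spn\{P_0\}$. So for $s\ge 4$, ``generic'' must include $\deg\phi=s-2$. If $\deg\phi\le s-3$, the direction $\delta P=x\,(y-\phi(x))$ is admissible, leaves the curve unchanged (the zero set of $(1+\varepsilon x)(y-\phi(x))$ near the origin is the same graph), and lies in the kernel.

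With this made explicit, your base point is a real simplification over the paper's. The paper takes $P^{\lambda_0}_Y=(1-x_1^{s-3})x_2-x_1$, whose zero set is the graph of the rational function $x/(1-x^{s-3})$. It therefore has to:
\begin{itemize}
\item control the sign of the weight $x^{s-3}-1$ in the resulting quadratic form;
\item prove irreducibility via Gauss's lemma;
\item invoke the real Nullstellensatz to get divisibility;
\item fix the $x_2$-coefficient to remove the scaling direction.
\end{itemize}
Your polynomial graph makes the weight identically $1$, and it replaces irreducibility and the Nullstellensatz by division with remainder. It also treats $s=3$ uniformly (take $\phi(x)=ax$). For $s=3$, by contrast, the paper's base polynomial collapses to $-x_1$, and its implicit-function step is unavailable.
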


In sub-Riemannian geometry, the presence of energy-minimizing abnormal curves affects the regularity of sub-Riemannian distances, see \cite[Chapter~11]{MR3971262}. This fact motivates the study of the minimizing-abnormal set, which in a sub-Riemannian group $(G,d)$ is defined as:
\begin{equation}
\abn^{min}(G,d) \coloneq q \Set{ \gamma_u(1) : \text{$\de \End_u$ is singular, $\gamma_u$ energy-minimizer}}.
\end{equation}
The most general result so far is that such set is closed and nowhere dense \cite{agrasmoothness,RT-MorseSard}. If the minimizing-abnormal set has zero measure, the group $(G,d)$ is said to satisfy the \emph{minimizing Sard property}. In view of the Pontryagin Maximum Principle, the minimizing-abnormal set of a sub-Riemannian group is partitioned into the normal-abnormal set $\abn^{nor}(G,d)$ and the minimizing-strictly-abnormal set $\abn^{min,str}(G,d)$, we discuss these details in \cref{min_abn_prel}. An application of the Morse-Sard theorem to the sub-Riemannian exponential map proves that the normal-abnormal set $\abn^{nor}(G,d)$ has zero measure. Consequently, the challenge of establishing the minimizing Sard property rests  on the study of strictly-abnormal minimizers. 

Verifying whether a strictly-abnormal curve minimizes energy is generally a difficult task, particularly for non-regular examples, see \cite{CJMRSSS25, socio-alec-tommaso}. However, strictly-abnormal minimizers are necessarily Goh-abnormal, see \cite{AS-Morse} and \cite[Chapter 12]{MR3971262}. Such higher-order condition depends on the underlying polarization $(G,V)$ and not on the metric, see \cref{def:goh}. The second result of the current work provides a dimension bound for the Goh-abnormal set $\abn^{Goh}(G,V)$, defined as in \eqref{eq:goh_abnormal_set}, in a broader class of metabelian Lie groups.

\begin{theorem} \label[theorem]{thm:rank_two_intro}
    Let $(G,V)$ be a polarized Lie group such that $G$ is metabelian and such that $\dim(G/[G,G]) = 2$.  Then the Goh-abnormal set $\abn^{Goh}(G,V)$  is $(\dim(G)-3)$-rectifiable. 
    In particular, for every left-invariant, sub-Riemannian metric $d$ on $G$, the sub-Riemannian group $(G,d)$ satisfies the minimizing Sard property.
\end{theorem}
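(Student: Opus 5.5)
The plan is to describe abnormal curves through their covectors and use the structure of the metabelian algebra. Write $\g = \mathfrak{a}\oplus\mathfrak{c}$, where $\mathfrak{a}=[\g,\g]$ is abelian and $\mathfrak{c}$ is a complement with $\dim\mathfrak{c}=2$. Let $X,Y$ span a complement of $V\cap\mathfrak{a}$ in $V$; since $V$ is bracket generating, $V+\mathfrak{a}=\g$, so $X,Y$ project onto a basis of $\g/\mathfrak{a}$.

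A control $u$ is abnormal iff there is $\lambda\neq 0$ in $\g^*$ with $\lambda(\Ad_{\gamma_u(t)\gamma_u(1)^{-1}}\text{-type transport of } V)=0$ for all $t$. More concretely, $\lambda(\Ad_{\gamma_u(t)}W)=0$ for all $W\in V$ and all $t$, after left-translating to the identity. The Goh condition adds $\lambda(\Ad_{\gamma_u(t)}[W_1,W_2])=0$ for $W_1,W_2\in V$.

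\textbf{Step 1: reduce to the horizontal projection.} Since $\mathfrak{a}$ is abelian, $A=\exp\mathfrak{a}$ acts on $\mathfrak{a}$ trivially. For $g=a\,c$ with $a\in A$, the map $\Ad_g$ restricted to $\mathfrak{a}$ equals $\Ad_c$ restricted to $\mathfrak{a}$, and this depends only on the projection $\pi(g)\in G/A\cong\R^2$ through the commuting operators $\rho(\xi)=\exp(\xi_1\ad X+\xi_2\ad Y)|_{\mathfrak{a}}$ (the two operators commute on $\mathfrak{a}$ since $[X,Y]\in\mathfrak{a}$ acts trivially). On the full $\g$, $\Ad_g$ differs by an $\ad$ of an element of $\mathfrak{a}$, which maps $\g$ into $\mathfrak{a}$.

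\textbf{Step 2: constrain the curve.} Applying the Goh condition to $[X,Y]$ and to $[W,Z]$ with $W\in V$, $Z\in V\cap\mathfrak{a}$, I get conditions of the form $\lambda(\rho(\xi(t))Z')=0$ for a bracket-generating family of $Z'\in\mathfrak{a}$, for all $t$, where $\xi(t)=\pi(\gamma_u(t))$ is a planar curve. The orbit $\{\rho(\xi)Z'\}$ spans $\mathfrak{a}$ once $\xi$ ranges over an open set, since $\mathfrak{a}$ is generated as a $\R[\ad X,\ad Y]$-module by these $Z'$. Hence either $\lambda|_{\mathfrak{a}}=0$, which is impossible because then $\lambda$ would vanish on $V+\mathfrak{a}=\g$, or the curve $\xi$ lies in the real-analytic (exponential-polynomial) set
\[
\Sigma_\lambda=\{\xi\in\R^2 : \lambda(\rho(\xi)Z')=0\ \text{for all } Z'\}.
\]
This set is a proper analytic subset of the plane, hence of dimension at most $1$. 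So $\xi$ runs inside a finite union of analytic arcs, and the support of the control is essentially one dimensional in the $\mathfrak{c}$-direction.

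\textbf{Step 3: count dimensions.} This is definable in $\Ranexp$, the tame-geometry ingredient. With $\xi$ confined to a curve, the endpoint is $\gamma_u(1)=a\cdot c(\xi(1))$. Write $a$ as an integral of $\rho(\xi(t))$ applied to the $V\cap\mathfrak{a}$-component plus the $\mathfrak{a}$-parts generated along $\xi$. The admissible endpoints for fixed $\lambda$ then lie in $\{a\in\mathfrak{a} : \lambda\text{-annihilation}\}\times\Sigma_\lambda$. The constraint $\lambda(a\text{-contribution})=0$ costs at least one dimension in $\mathfrak{a}$ and $\Sigma_\lambda$ costs one in $\R^2$; quotienting $\lambda$ by scaling gives $\dim\mathbb{P}(\g^*)$ parameters, which are compensated by the fact that the conditions are linear in $\lambda$. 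The precise statement is that the incidence set $\{(\lambda,g)\}$ is definable, and its image has dimension at most $\dim G-3$ by fiber counting. Definable sets of dimension $k$ are $k$-rectifiable, being finite unions of $C^1$ cells. This gives the first claim. The minimizing Sard property then follows since, as recalled from \cite{AS-Morse}, strictly abnormal minimizers are Goh, and the normal part has measure zero.

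\textbf{Main obstacle.} The hardest step is the dimension count in Step 3. A naive count over all $\lambda$ gives no gain, so I must show that for generic $\xi$-curves the set of admissible $\lambda$ is small. Equivalently, the map $(\lambda,\text{curve})\mapsto$ endpoint must have fibers of dimension at least $\dim\mathbb{P}(\g^*)-\dim\ker$ minus two. I would first try a stratification by the rank of $\lambda|_{\mathfrak{a}}$ composed with $\rho$, using o-minimal cell decomposition to handle each stratum uniformly.
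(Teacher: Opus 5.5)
There is a genuine gap, and it sits in Step~3, which carries the whole content of the theorem.

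Your Steps~1 and~2 are essentially correct. They amount to \cref{poly_invariance} plus the fact that $\g'$ is generated, as a module over the commuting operators $\ad_X|_{\g'}$ and $\ad_Y|_{\g'}$, by $[X,Y]$ and $V\cap\g'$. So the planar component $b(t)$ of a Goh-abnormal curve stays in a proper analytic subset $\Sigma_\lambda$ of the plane; when $V\cap\g'=0$ this is exactly the set $Z^\lambda$ the paper starts from.

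Step~3, however, contains no argument. For fixed $\lambda$ you only confine endpoints to a set of codimension two. The union over $\lambda\in\mathbb{P}(V^\perp)$, a family of dimension $\dim G-\dim V-1$, then yields no bound, and ``compensated by linearity in $\lambda$'' is not a mechanism. Your claim that the incidence set $\{(\lambda,g)\}$ is definable is also unjustified. That set is built from arbitrary $L^1$ controls and, already for one $\lambda$, from paths winding arbitrarily often around loops of $\Sigma_\lambda$. The paper only obtains a countable union of definable pieces, and it explicitly does not rule out that the abnormal set is dense, which a definable set of dimension at most $\dim G-3$ cannot be.

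The missing idea is that, for fixed $\lambda$, the reachable set is a countable union of \emph{one-dimensional} pieces. The paper proves this when $V\cap\g'=0$:
\begin{itemize}
\item The control is then $\dot b$, so $\gamma_u$ is the multiplicative integral of the planar curve $b$.
\item By the tree-like property (\cref{prop:homotopy}), its endpoint depends only on the homotopy class of $b$, relative to its endpoints, in the finite graph $Z^\lambda_r$.
\item Hence $H^\lambda$ is covered by countably many one-parameter families: words in the edges followed by a partial edge.
\item The Goh condition puts $\lambda$ in $\mathbb{S}(\{X_1,X_2,[X_1,X_2]\}^\perp)$, of dimension $\dim G-4$.
\item Hardt triviality together with the Cluckers--Miller integration theorem make the construction uniform in $\lambda$. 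This gives countably many $\Ranexp$-definable maps on domains of dimension at most $\dim G-3$.
\end{itemize}

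You also never control the components $z(t)\in V\cap\g'$ of the control. These add $\int_0^1 e^{\ad_{b(t)}}z(t)\,\de t$ to $a(1)$, subject only to the constraints coming from $X$ and $Y$. The paper separates them by splitting on whether $\lambda$ annihilates $I=I(V\cap\g')$:
\begin{itemize}
\item If it does, the curve projects to an abnormal curve of $(G/N,V/I)$ with $N=\langle\exp I\rangle$. That polarization has rank two and meets the derived algebra trivially, so \cref{theo1_intro} applies.
\item If it does not, some $Y\in V\cap\g'$ has $P^\lambda_Y\not\equiv0$, and the Goh condition makes $P^\lambda_Y(0,\cdot)$ and its gradient vanish along $b$.
\end{itemize}

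Be aware that the paper's conclusion $b\equiv0$ in this second case is too quick. The common zeros of a nonzero analytic function and its gradient need not be totally disconnected when the function has a repeated factor. Take $M_{2,s}$ with $s\ge4$, $V=\spn\{X_1,X_2,Y_{0,0}\}$, and $\lambda$ the element of the dual basis corresponding to $Y_{2,0}$. Then $P^\lambda_{Y_{0,0}}(0,x_1X_1+x_2X_2)=x_1^2/2$. Every curve with control $X_2+u_3(t)Y_{0,0}$ is Goh-abnormal for this $\lambda\notin I^\perp$, yet it leaves $G'$.

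So in that regime only your weaker conclusion, that $b$ stays in a one-dimensional analytic set, is available. What is still missing, both from your proposal and from that step of the paper, is a genuine dimension count that also absorbs the free $V\cap\g'$-directions.
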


Standard arguments used to establish the negligibility of the normal-abnormal set actually demonstrate that the larger set of conjugate points is a subanalytic set of codimension one. Thus, the improved bound on the dimension of the Goh-abnormal set of \cref{thm:rank_two_intro} does not extend \emph{a priori} to the entire minimizing-abnormal set. Furthermore, general results regarding fine estimates for the dimension of the normal-abnormal set $\abn^{nor}(G,d)$ are currently unavailable. Consequently, direct bounds on the dimension of the whole abnormal set, such as the estimate provided in \cref{theo1_intro}, are useful to determine dimensional bounds of $\abn^{min}(G,d)$ as well.

The technical core of the present work lies in the machinery that we introduce to prove \cref{theo1_intro}. From an idea originated in \cite{MR3077915}, in all polarized groups, there exists a finite-dimensional family $\{P^\lambda\}_{\lambda \in \Lambda}$ of analytic functions such that each abnormal curve is contained in the zero-level set $Z^\lambda$ of some $P^\lambda$. We call these functions $\{P^\lambda\}_{\lambda \in \Lambda}$  the \emph{abnormal functions}, see \eqref{eq:def_plambda}. In metabelian Lie groups, some of these functions are invariant under the action of the derived subgroup. We use this observation to show that, in the groups covered by \cref{theo1_intro}, the set $H^\lambda$ of points reachable by rectifiable curves within $Z^\lambda$ is a one-dimensional object. In order to formalize the latter argument, we rely on the notion of tree-like equivalence for curves of bounded variations introduced in \cite{MR2630037}. 

Finally, in order to carry out a dimensional argument, we need a sufficiently tame description of how the sets $H^\lambda$ vary with the parameter $\lambda$. To achieve this, we rely on tools from tame geometry (see~\cite{MR1633348}) to obtain a subanalytic description of the family $Z^\lambda$. As a first (and expected) step, this reduces the problem to working within the o-minimal structure $\mathbb{R}_{\mathrm{an}}$ of globally subanalytic sets; see \cref{sec:tame-sets} and in particular \cref{rem:ran} for the distinction between analytic and globally subanalytic sets.
 The crucial additional point is that the corresponding end-point map, written in mixed coordinates, involves parameterized integration, see \cref{def_integral}. This operation is not handled directly inside $\Ran$ and forces us to pass to the larger o-minimal structure $\Ranexp$, where stability under integration is available. In this way, we obtain a countable family of $\Ranexp$-definable maps whose images cover the abnormal set, and whose domains have dimension at most $\dim(G)-3$. This is the key mechanism underlying the proof of \cref{theo1_intro}.

  \begin{remark}
     Note that, in the proof of both \cref{{theo1_intro}} and \cref{{thm:rank_two_intro}}, we actually prove the stronger fact that the abnormal set is contained in the countable union of definable sets of dimension at most $\dim(G)-3$  (in the o-minimal structure $\Ranexp$). In particular, the abnormal set is contained in the countable union of $C^1$-submanifolds of dimension at most $\dim(G)-3$.
 \end{remark}

 \begin{remark}
   We emphasize that \cref{theo1_intro} does not rule out the possibility that the abnormal set is dense. Nevertheless, as noted above, the minimizing abnormal set is known to be closed and nowhere dense.
\end{remark}

\medskip

\subsubsection*{\bf Structure of the paper} In \cref{prel}, we introduce the terminology and basic facts about abnormal sets and abnormal curves in polarized groups. In \cref{sec:tree}, we summarize some notions about tree-like curves. In \cref{sec:tame-sets}, we recall some facts of tame geometry and o-minimal structures that we combine  to obtain a regularity result for the end-point map in metabelian Lie groups, see \cref{def_integral}.

\cref{main} is devoted to the proofs of our main results: \cref{theo1_intro}, \cref{thm:rank_two_intro}. For the sake of completeness, we start by addressing the case of metabelian Lie groups such that $\dim(G/[G,G])=1$, for which most of the aforementioned machinery is not necessary and a stronger result is obtained, see \cref{thm:rank_one}. We end the paper with the proof of \cref{prop:free-metabelian-sharpness}.

\subsubsection*{\bf Use of artificial intelligence}

In accordance with the Leiden Declaration on Artificial Intelligence and Mathematics, endorsed by the International Mathematical Union, we disclose that OpenAI's ChatGPT (GPT-5.6 Pro) assisted in devising the family of abnormal curves used in the proof of \cref{prop:free-metabelian-sharpness}. The authors independently verified the argument, made all final mathematical decisions, and take full responsibility for the correctness and content of the manuscript.
	
\section{Preliminaries} \label{prel}

We recall some basic concepts on the sub-Riemannian geometry of polarized Lie groups, following \cite{donne2024metricliegroupscarnotcaratheodory}. Other classical references are \cite{MR3971262,MR1867362,MR3308395}.

\subsection{Polarized groups} Let $G$ be a 
Lie group, with identity element $\id$ and associated Lie algebra $\g \coloneq  T_\id G$. For $g \in G$, we denote by $L_g$ and $R_g$ the left and right translation by $g$, respectively. A subbundle $\Delta$ of the tangent bundle $TG$ is \emph{left-invariant} whenever
	\begin{equation*}
		\Delta_{g h} = \de L_g(\Delta_h) \quad \text{for every $g,h \in G$.}
	\end{equation*}
Fixed some $\Delta \subseteq TG$, we say that an absolutely continuous curve $\gamma \colon [0,1] \to G$ is \emph{horizontal} if $\dot\gamma(t)\in \Delta_{\gamma(t)}$ for a.e. $t\in[0,1]$.    
\begin{equation} \label{eq:def_distribution}
    \Delta_g \coloneq q \Set{ v \in T_gG : \left(\de L_g\right)^{-1}v \in V}, \quad \text{for every $g \in G$.}
\end{equation}
We refer to $\Delta$, defined as in \eqref{eq:def_distribution}, as the \emph{distribution} induced by $V$. 
    
\begin{definition}[Polarized group] Given a Lie group $G$, with Lie algebra $\g$, and vector subspace $V$, we say that the pair $(G,V)$ is a \emph{polarized group}, and we refer to $V$ as its \emph{polarization}. We also refer to $\Delta$, defined in \eqref{eq:def_distribution}, as the \emph{induced distribution} of $(G,V)$.
\end{definition}

We say that a polarization $V \subseteq \g$ is \emph{bracket-generating} if $\Lie(V)=\g$, where $\Lie(V)$ is the Lie algebra generated by $V$. If not stated otherwise, every polarization in this work is assumed to be bracket-generating. 

Next, we recall the correspondence between horizontal curves and their controls.
For each $u \in L^1([0,1],\g)$, we denote by $\gamma_u$ the absolutely continuous curve $\gamma \colon [0,1] \to G$ such that 
\begin{equation} \label{system}
 \begin{cases} 
 \gamma(0)=\id, \\ \dot{\gamma}(t)= \de L_{\gamma(t)}(u(t)), \quad \text{for a.e. $t \in [0,1]$.}
 \end{cases}
\end{equation}
Vice-versa, for every absolutely continuous curve $\gamma \colon [0,1] \to G$ such that $\gamma(0)=\id$, there exists a unique $u \in L^1([0,1],\g)$ such that \eqref{system} holds. We call $u$ the {\em control} of $\gamma$. If $\Delta$ is induced by a polarization $V$, the curve $\gamma_u$ is horizontal if and only if $u(t) \in V$ for a.e. $t \in [0,1]$.

We define the \emph{end-point map} of a polarized group $(G,V)$ as
\begin{equation} \label{eq:end_point}
    \End \colon L^1([0,1],V) \to G, \quad u \mapsto \gamma_u(1).
\end{equation}
We recall that the end-point map is smooth and it is surjective if and only if $G$ is connected and $V$ is bracket-generating.

\subsection{Abnormal sets in polarized groups}

Curves associated with controls that are  critical points of the end-point map \eqref{eq:end_point} are called \emph{abnormal curves}. The \emph{abnormal set} of a polarized group $(G,V)$ is the set of singular values of its end-point map
\begin{equation} \label{eq:abnormal_set}
    \abn(G,V) \coloneq q \Set{ \gamma_u(1) : \text{$\de \End_u$ is singular}}.
\end{equation}
We say that a polarized Lie group satisfies the \emph{Sard property} if $\abn(G,V)$ has zero Haar measure. The differential of the end-point map of a polarized groups has an explicit form. We refer to \cite[Proposition~7.2.1]{donne2024metricliegroupscarnotcaratheodory} for a proof of the following result.

\begin{proposition}[Differential of the end-point map] Let $(G,V)$ be a polarized group, then the differential of its end-point map is given by
    \begin{equation} \label{eq:differential_endpoint}
        \de \End_u (v) = \left(\de R_{\gamma_{u}(1)}\right) \int^1_0 \Ad_{\gamma_u(t)} v(t) \, \de t,
    \end{equation}
    where $\Ad_g \colon \g \to \g$ is the \emph{adjoint map} defined as $\Ad_g \coloneq q \de L_g \circ \de R^{-1}_g$, for every $g \in G$.
\end{proposition}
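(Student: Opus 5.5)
We will prove \eqref{eq:differential_endpoint} by linearizing the control system \eqref{system} along $\gamma_u$. The first step is to fix $u, v \in L^1([0,1],V)$ and set $u_s \coloneq u + s v$ for $s$ in a small interval. Since the end-point map is smooth, we have
\[
\de\End_u(v)=\frac{\de}{\de s}\Big|_{s=0}\gamma_{u_s}(1).
\]

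The key step is to compare the perturbed curve with the unperturbed one through the quotient
\[
\eta_s(t)\coloneq \gamma_u(t)^{-1}\gamma_{u_s}(t),
\]
which lies in $G$ and satisfies $\eta_s(0)=\id$ and $\eta_0\equiv \id$. We differentiate in $t$ using the Leibniz rule for products in $G$ together with $\frac{\de}{\de t}\gamma_u^{-1}=-\de R_{\gamma_u^{-1}}\de L_{\gamma_u^{-1}}\dot\gamma_u$. This shows that $\eta_s$ solves
\[
\dot\eta_s = \de L_{\eta_s}\bigl(u_s(t)\bigr) - \de R_{\eta_s}\bigl(u(t)\bigr).
\]
Put $w(t)\coloneq \partial_s|_{s=0}\eta_s(t)\in T_\id G=\g$. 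Differentiating in $s$ at $s=0$, where $\eta_0=\id$, gives the linear equation
\[
\dot w = v(t) + [u(t), w(t)]\quad\text{(up to the sign convention for the bracket)}, \qquad w(0)=0.
\]
This follows because the $s$-derivative of $\de L_{\eta}X - \de R_{\eta}X$ at $\eta=\id$ is $\ad$-type. Rigorously, one works in a chart near $\id$ with $\eta_s$ uniformly close to $\id$, and uses differentiability of solutions of Carathéodory ODEs with respect to $L^1$ parameters.

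Next we solve the linear equation explicitly. Since $\ddt \Ad_{\gamma_u(t)^{-1}} = -\ad_{u(t)}\circ\Ad_{\gamma_u(t)^{-1}}$, variation of constants gives
\[
w(1)=\Ad_{\gamma_u(1)^{-1}}\int_0^1 \Ad_{\gamma_u(t)}v(t)\,\de t .
\]
Finally we return to $\gamma_{u_s}(1)=\gamma_u(1)\eta_s(1)$. Differentiating in $s$ gives
\[
\de\End_u(v)=\de L_{\gamma_u(1)}w(1)=\de L_{\gamma_u(1)}\Ad_{\gamma_u(1)^{-1}}\int_0^1\Ad_{\gamma_u(t)}v(t)\,\de t .
\]
Using $\de L_g\circ\Ad_{g^{-1}}=\de R_g$, this is exactly \eqref{eq:differential_endpoint}.

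We expect the main obstacle to be the analytic justification: differentiating under the ODE with merely $L^1$ controls, and keeping the sign conventions for $\ad$ and $\Ad$ consistent. For the former we will invoke the standard smooth dependence on $L^1$ controls, which is the same fact that underlies smoothness of $\End$. As an alternative that avoids one sign check, we can verify the formula directly for linear groups $G\subseteq GL(n)$, where $\gamma_{u_s}$ solves $\dot X=Xu_s$ and Duhamel's formula gives the claim immediately. The general case then follows by the same computation done in a chart, or locally via Ado's theorem on the universal cover.
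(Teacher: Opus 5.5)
The paper gives no proof of its own here; it cites \cite[Proposition~7.2.1]{donne2024metricliegroupscarnotcaratheodory}. Your linearization along $\gamma_u$ is a sound proof once you fix one sign. You say the sign is up to convention, but relative to the identity you use in the next step it is not. Write $u^L,u^R$ for the left- and right-invariant extensions of $u(t)$. Then your equation for $\eta_s$ reads $\dot\eta_s=(u^L-u^R)(\eta_s)+s\,v^L(\eta_s)$. The field $u^L-u^R$ vanishes at $\id$, and its linearization there is $w\mapsto[w^L,u^L-u^R](\id)=[w,u]$, because left- and right-invariant fields commute. Hence
\[
\dot w=v(t)-[u(t),w(t)],\qquad w(0)=0 .
\]
In matrix groups, $\eta_s=X^{-1}X_s$ gives $\dot w=wu-uw+v$, which agrees. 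This is the sign compatible with the identity $\ddt\Ad_{\gamma_u(t)^{-1}}=-\ad_{u(t)}\circ\Ad_{\gamma_u(t)^{-1}}$ that you invoke next, which is also the convention behind the paper's $XP^\lambda_Y=P^\lambda_{[X,Y]}$. That identity says $\Ad_{\gamma_u(t)^{-1}}$ is the fundamental solution of $\dot w=-\ad_u w$. With your displayed $\dot w=v+[u,w]$, variation of constants would not return your formula for $w(1)$. After this correction, the rest is correct: the ODE for $\eta_s$, the resulting $w(1)=\Ad_{\gamma_u(1)^{-1}}\int_0^1\Ad_{\gamma_u(t)}v(t)\,\de t$, and $\de L_g\circ\Ad_{g^{-1}}=\de R_g$.

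A shorter route, which also explains why $\de R_{\gamma_u(1)}$ appears, uses the right quotient $\zeta_s(t):=\gamma_{u_s}(t)\gamma_u(t)^{-1}$ instead. The same Leibniz computation gives
\[
\dot\zeta_s=s\,\de L_{\zeta_s}\left(\Ad_{\gamma_u(t)}v(t)\right),\qquad \zeta_s(0)=\id .
\]
So $\zeta_s$ is simply the curve with control $s\,\Ad_{\gamma_u(\cdot)}v(\cdot)\in L^1([0,1],\g)$. An elementary estimate in a chart at $\id$ gives $\zeta_s(1)=s\int_0^1\Ad_{\gamma_u(t)}v(t)\,\de t+o(s)$. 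Then $\gamma_{u_s}(1)=\zeta_s(1)\gamma_u(1)$ yields the formula at once. This avoids linearizing along a non-trivial trajectory, the variation-of-constants step, and all $\ad$-sign bookkeeping. The Ado/linear-group detour you mention is unnecessary in either approach.
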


In view of \eqref{eq:differential_endpoint}, a curve $\gamma_u$ is abnormal if and only if there exists $\lambda \in \g^* \setminus \{0\}$ such that
\begin{equation} \label{eq:lambda_abnormal}
    \lambda \left( \int^1_0 \Ad_{\gamma_u(t)} v(t) \, \de t \right) = 0, \quad \text{for every $v \in L^1([0,1],V)$.}
\end{equation}
We say that a curve $\gamma_u$ is \emph{$\lambda$-abnormal} if \eqref{eq:lambda_abnormal} holds. We point out that being $\lambda$-abnormal is invariant under multiplication of $\lambda$ by a non-zero scalar. Therefore, it is sufficient to consider covectors in the unit sphere $\mathbb{S}(\g^*)$ with respect to some Euclidean norm on $\g^*$.

A standard approximation argument with Dirac masses shows that \eqref{eq:lambda_abnormal}, is equivalent to 
\begin{equation}
\label{eq:abn-equivalence}
    \lambda \left( \Ad_{\gamma_u(t)} X \right) = 0, \quad \text{for every $t \in [0,1]$ and $X \in V$}.
\end{equation}
In particular $\lambda$-abnormal curves exist if and only if $\lambda \in V^\perp\setminus \{0\}$, where $V^\perp$ denotes the annihilator of the set $V$. 

Given $\lambda \in \g^*$ and $X \in \g$, define the real-analytic\footnote{Every Lie group admits a canonical real-analytic structure for which the group operations are analytic. In particular, the conjugation map $(g,h)\mapsto ghg^{-1}$ is analytic on $G\times G$, hence its differential with respect to $h$ at the identity, namely $g\mapsto \Ad_gX$, is analytic for every fixed $X\in\g$. Composing with the linear functional $\lambda\in\g^*\setminus\{0\}$, we obtain that $P^\lambda_X$ is analytic.}
function
\begin{equation} \label{eq:def_plambda}
    P^\lambda_X \colon G \to \R, \quad g \mapsto \lambda \left( \Ad_g X \right),
    \end{equation}
    that we call \emph{abnormal functions}. Moreover, we also define the analytic set
\begin{equation*}
    Z^\lambda \coloneq q \Set{ g \in G \colon \text{$P^\lambda_X(g)=0$ for every $X \in V$}}.
\end{equation*}
We deduce that a curve $\gamma_u$ is $\lambda$-abnormal if and only if $\gamma_u \subseteq Z^\lambda$. We refer to the set
\begin{equation} \label{eq:reachable_set}
    H^\lambda \coloneq  \Set{ \End(u) \colon \begin{gathered}
u\in L^1([0,1],V), \ 
\text{$\gamma_u(t) \in Z^\lambda$ for every $t \in [0,1]$}  \end{gathered}} ,
\end{equation}
for $\lambda \in V^\perp \setminus \{0\}$ as the \emph{reachable set} of $Z^\lambda$. Both $Z^\lambda$ and $H^\lambda$ are invariant under rescaling of $\lambda$ by a non-zero constant. We summarize the previous discussion in the following statement.

\begin{proposition}[Description of the abnormal set in polarized groups] Let $(G,V)$ be a polarized group. The abnormal set  satisfies
\begin{equation*}
    \abn(G,V) = \bigcup_{\lambda \in \mathbb{S}(V^\perp)} H^\lambda 
\end{equation*} 
where $H^\lambda$ is defined in \eqref{eq:reachable_set}.   
\end{proposition}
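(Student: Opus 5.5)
The plan is to combine the characterization of abnormal curves in \eqref{eq:abn-equivalence} with the definition \eqref{eq:reachable_set} of the reachable sets, and to prove the two inclusions separately.

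For the inclusion $\subseteq$, take $g \in \abn(G,V)$, so $g = \End(u)$ for some control $u$ at which $\de \End_u$ is singular, i.e.\ not surjective onto $T_gG$. By \eqref{eq:differential_endpoint}, the image of $\de \End_u$ is $\left(\de R_{g}\right)W$, where
\begin{equation*}
W \coloneq \Set{ \int_0^1 \Ad_{\gamma_u(t)} v(t) \, \de t : v \in L^1([0,1],V)} \subseteq \g
\end{equation*}
is a linear subspace. Since $\de R_g$ is an isomorphism, $W$ is a proper subspace, so some $\lambda \in \g^*\setminus\{0\}$ annihilates it; this is exactly \eqref{eq:lambda_abnormal}. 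We may normalize $\lambda$ to lie in $\mathbb{S}(\g^*)$.

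Next we pass to the pointwise condition \eqref{eq:abn-equivalence}. The function $t\mapsto \lambda(\Ad_{\gamma_u(t)}X)$ is continuous. If it were nonzero at some $t_0$ for some $X\in V$, we would choose $v = X\cdot \mathbf 1_{[t_0-\epsilon,t_0+\epsilon]}$; for small $\epsilon$ the integral has a definite sign, contradicting \eqref{eq:lambda_abnormal}. Taking $t=0$, where $\gamma_u(0)=\id$ and $\Ad_\id = \mathrm{Id}$, gives $\lambda(X)=0$ for every $X\in V$. Hence $\lambda\in V^\perp$, and after rescaling $\lambda \in \mathbb{S}(V^\perp)$. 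The pointwise condition then says precisely that $P^\lambda_X(\gamma_u(t))=0$ for all $t$ and all $X\in V$, i.e.\ $\gamma_u([0,1])\subseteq Z^\lambda$. Therefore $g=\End(u)\in H^\lambda$.

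For the inclusion $\supseteq$, let $g\in H^\lambda$ with $\lambda\in\mathbb{S}(V^\perp)$, and let $u$ be a control with $\gamma_u\subseteq Z^\lambda$ and $\End(u)=g$. Then $\lambda(\Ad_{\gamma_u(t)}X)=0$ for all $t$ and $X\in V$. Integrating against an arbitrary $v\in L^1([0,1],V)$, using linearity of $\lambda$ and of $\Ad$, gives \eqref{eq:lambda_abnormal}. So the nonzero covector $\lambda$ annihilates $W$. Because $\left(\de R_g\right)^*$ is also an isomorphism, the image of $\de \End_u$ is contained in a proper hyperplane of $T_gG$. Thus $u$ is critical and $g\in\abn(G,V)$.

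No step is a real obstacle; the only point needing care is the Dirac-mass approximation, which is handled by the continuity argument above. Combining the two inclusions gives $\abn(G,V) = \bigcup_{\lambda \in \mathbb{S}(V^\perp)} H^\lambda$.
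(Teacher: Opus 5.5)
Your proposal is correct and follows the same route as the paper. Criticality is characterized by a nonzero covector annihilating the image as in \eqref{eq:lambda_abnormal}, which is then converted to the pointwise condition \eqref{eq:abn-equivalence}; your continuity and indicator-function argument is an explicit version of the paper's Dirac-mass approximation. Evaluating at $t=0$ gives $\lambda\in V^\perp$, and rescaling puts $\lambda$ in $\mathbb{S}(V^\perp)$, so both inclusions go through as you describe.
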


We conclude this section with some properties of functions as in \eqref{eq:def_plambda}, as well as discussing how the abnormality condition behaves under quotients by normal Lie subgroups.

\begin{remark} \cite[Section~7.3.2]{donne2024metricliegroupscarnotcaratheodory}\label[remark]{rem:derivative_plambda}
     For every $\lambda \in \g^*$ and $X,Y \in \g$, we get $X P^\lambda_Y = P^\lambda_{[X,Y]}$. 
    Therefore, if $t \in [0,1]$ is a point of differentiability of a curve $\gamma_u$, with $u \in L^1([0,1],\g)$, we obtain
    \begin{equation} \label{eq:diff}
        \ddt P^\lambda_X(\gamma_u(t)) = P^\lambda_{[u(t),X]} (\gamma_u(t)).
    \end{equation}
\end{remark}

\begin{remark} \label[remark]{rem:abnormal_ideal}
    We claim that, for every $\lambda \in \g^*$, the set $I^\lambda \subseteq \g$, defined as
    \begin{equation*}
        I^\lambda \coloneq q \Set{ X \in \g : \text{$P^\lambda_X(g)=0$ for every $g \in G$}},
    \end{equation*}
    is an ideal of $\g$. Indeed, since $\Ad$ is linear, we get that $I^\lambda$ is a vector subspace. Moreover, if $Y \in I^\lambda$, then $P^\lambda_Y \equiv 0$ and from \cref{rem:derivative_plambda} we get $0 \equiv XP^\lambda_Y = P^\lambda_{[X,Y]}$, for every $X \in \g$.
\end{remark}

\begin{proposition} \label[proposition]{rem:quotient}
    Let $(G,V)$ be a polarized group with Lie algebra $\g$, and $N \le G$ be a normal Lie subgroup of $G$ with Lie algebra $I \subseteq \g$. Then $(G/N,V/I)$ is a polarized group. Denote by $\pi \colon G \to G/N$ the quotient map and let $\gamma_u$ be an horizontal curve. Then $\pi \circ \gamma_u$ is horizontal for $(G/N,V/I)$ and it is abnormal if and only if $\gamma_u$ is $\lambda$-abnormal for some $\lambda \in \mathbb{S}(I^\perp)$.
\end{proposition}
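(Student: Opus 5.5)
Two things need checking: that $(G/N,V/I)$ is a polarized group, and the abnormality equivalence. For the first, $G/N$ is a Lie group with Lie algebra $\g/I$, and the differential of $\pi$ at $\id$ is the quotient map $\g\to\g/I$; its image $V/I$ (meaning $(V+I)/I$) is a subspace. It is bracket-generating because the quotient map is a Lie algebra homomorphism: $\Lie((V+I)/I)=(\Lie(V)+I)/I=\g/I$. Horizontality of $\pi\circ\gamma_u$ comes from differentiating $\pi\circ L_g=L_{\pi(g)}\circ\pi$. This gives $\frac{\de}{\de t}\pi(\gamma_u(t))=\de L_{\pi(\gamma_u(t))}(u(t)+I)$, so $\pi\circ\gamma_u=\gamma_{\bar u}$ with control $\bar u=u+I\in V/I$.

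For the abnormality equivalence I will use the characterization \eqref{eq:abn-equivalence} on both groups. First I identify $(\g/I)^*$ with $I^\perp\subseteq\g^*$ via $\mu\mapsto\lambda=\mu\circ q$, where $q\colon\g\to\g/I$. The key identity is that $\pi\circ C_g=C_{\pi(g)}\circ\pi$ for conjugations. Differentiating at $\id$ gives $q\circ\Ad_g=\Ad_{\pi(g)}\circ q$. Hence, for $\lambda=\mu\circ q$ and $X\in V$,
\[
\mu\bigl(\Ad_{\pi(\gamma_u(t))}(X+I)\bigr)=\mu\bigl(q(\Ad_{\gamma_u(t)}X)\bigr)=\lambda\bigl(\Ad_{\gamma_u(t)}X\bigr).
\]
So $\pi\circ\gamma_u$ is $\mu$-abnormal in $(G/N,V/I)$ exactly when $\gamma_u$ is $\lambda$-abnormal in $(G,V)$. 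Since $\mu\neq0$ if and only if $\lambda\neq0$, and abnormality is invariant under rescaling, we may normalize $\lambda$ to lie in $\mathbb S(I^\perp)$. This gives both implications.

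The one delicate point is that abnormality in the quotient requires a $\mu$ that is nonzero on $\g/I$. This is exactly why $\lambda$ must lie in $I^\perp\setminus\{0\}$, and not merely in $V^\perp$. I also need to check that the end-point map of the quotient is defined on controls in $V/I$ and that lifting controls causes no issue. Since the criterion \eqref{eq:abn-equivalence} depends only on the curve in $G/N$, no lifting argument is needed. Beyond this, I expect no serious obstacle.
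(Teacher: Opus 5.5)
Your proposal is correct and follows essentially the same route as the paper: you identify $(\g/I)^*$ with $I^\perp$, use the intertwining $q\circ\Ad_g=\Ad_{\pi(g)}\circ q$ (which the paper writes as $P^\lambda_X=P^\lambda_{\de\pi(X)}\circ\pi$ for $\lambda\in I^\perp$), and compare the pointwise criterion \eqref{eq:abn-equivalence} in $G$ and in $G/N$. Your version is only slightly more explicit than the paper's, namely in deriving the intertwining identity by differentiating conjugation and in checking the bracket-generating property via $q(\Lie(V))=\Lie(q(V))$.
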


\begin{proof}
    Clearly, $G/N$ is a Lie group and $V/I= \de\pi(V)$ is bracket-generating whenever $V$ is, since each surjective Lie algebra morphism maps bracket-generating sets in bracket-generating sets. Moreover, if $\gamma_u$ be an horizontal curve in $(G,V)$, then
    \begin{equation*}
        \ddt (\pi \circ \gamma_u) = \de \pi \circ \de L_{\gamma_u(t)}(u(t))= \de L_{\pi(\gamma_u(t))} \de \pi(u(t)) \in  \de L_{\pi(\gamma_u(t))}(\de\pi(V))= \de L_{\pi(\gamma_u(t))}(V/I),
            \end{equation*}
        where in the second equality we used that $\pi$ is a Lie group morphism. For the second part of the statement, we use the canonical identification $\left(\de \pi(\g)\right)^* \cong \ker(\de \pi)^\perp = I^\perp$ and that, since $\pi$ is a Lie group morphism, we have $\Ad_{\pi(g)}(\de \pi(X)) = \de \pi \left(\Ad_g(X)\right)$ for every $g \in G$ and $X \in \g$. Thus
        \begin{equation} \label{poly_proj}
            P^\lambda_X = P^\lambda_{\de\pi(X)} \circ \pi, \quad \text{for every $\lambda \in \mathbb{S}(I^\perp)$.}
        \end{equation}
        Fix $\lambda \in \mathbb{S}(I^\perp)$. On the one hand $\pi \circ \gamma_u$ is $\lambda$-abnormal if and only if $P^\lambda_{\de \pi(X)}(\pi \circ \gamma_u(t))=0$ holds for every $t \in [0,1]$ and $X \in V$. On the other hand, $\gamma_u$ is $\lambda$-abnormal if and only if $P^\lambda_X(\gamma_u(t))=0$ holds for every $t \in [0,1]$ and $X \in V$. In view of \eqref{poly_proj}, the two conditions are equivalent.
\end{proof}

\subsection{Abnormal minimizers and Goh-abnormal curves} \label{min_abn_prel}

A scalar product $\langle\cdot,\cdot\rangle$ on the polarization $V$ of a connected polarized group $(G,V)$ endows $G$ with a left-invariant distance function $d$ such that
\begin{equation} \label{eq:distance}
    d(\id,g) \coloneq  \inf \Set{\left( \int^1_0 \langle u(t),u(t)\rangle \de t\right)^{1/2}\colon u \in L^1([0,1],V), \, \gamma_u(1)=g }.
\end{equation}
The resulting metric space $(G,d)$ is called \emph{sub-Riemannian group}, and the datum $(G,V,\langle\cdot,\cdot\rangle)$ is its associated \emph{sub-Riemannian structure}. The infimum in \eqref{eq:distance} is always realized and we refer to a curve $\gamma_u$, with $u \in L^1([0,1],V)$, realizing it as \emph{energy-minimizing curve}.

The importance of abnormal curves stems from the fact that they satisfy the first-order necessary condition for being energy-minimizer provided by the Pontryagin's Maximum Principle, which for sub-Riemannian Lie groups is a simple consequence of the Lagrange's Multipliers Rule and it states as follows.

\begin{theorem}[Pontryagin’s Maximum Principle {\cite[Theorem~7.3.3]{donne2024metricliegroupscarnotcaratheodory}}]
\label[theorem]{thm:pont}
Let $(G,d)$ be a sub-Riemannian group with sub-Riemannian structure $(G,V,\langle\cdot,\cdot\rangle)$. If $\gamma_u \colon [0,1] \to G$ is an energy-minimizing curve, then at least one of the following holds:
\begin{enumerate}[(i)]
    \item there exist $\lambda \in \g^*$ such that $\langle u(t),X\rangle = \lambda\left(\Ad_{\gamma_u(t)}X\right)$ for every $X \in V$ and $t \in [0,1]$, 
    \item $\gamma_u$ is abnormal.
\end{enumerate}
\end{theorem}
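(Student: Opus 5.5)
The statement to prove is the Pontryagin Maximum Principle (\cref{thm:pont}): the structure is the Lagrange multiplier rule applied to the smooth map
\begin{equation*}
F \colon L^2([0,1],V) \to \R \times G, \quad F(u) \coloneq \left(J(u), \End(u)\right), \qquad J(u) \coloneq \frac12\int_0^1 \langle u(t),u(t)\rangle \,\de t .
\end{equation*}
Working in $L^2$ is harmless: by Cauchy--Schwarz any energy minimizer has finite energy, and $L^2\subseteq L^1$, so the end-point map restricts to a smooth map on $L^2$ with the differential given by \eqref{eq:differential_endpoint}.

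\textbf{Step 1 (openness alternative).} If $u$ minimizes $J$ on $\End^{-1}(\gamma_u(1))$, then $\de F_u$ is not surjective onto $\R\times T_{\gamma_u(1)}G$. Suppose instead it were surjective. The target is finite-dimensional, so one can choose finitely many directions $v_0,\dots,v_n$ with $n=\dim G$ whose images under $\de F_u$ span. Restrict $F$ to the finite-dimensional affine slice $s\mapsto u+\sum_i s_i v_i$. By the inverse function theorem this restriction is a local submersion near $s=0$, hence locally open. In particular it reaches $(J(u)-\varepsilon,\gamma_u(1))$ for small $\varepsilon>0$, which contradicts minimality. So $\mathrm{Im}\,\de F_u$ is a proper subspace, and there is a nonzero covector $(\nu,\xi)\in\R\times T^*_{\gamma_u(1)}G$ annihilating it. Explicitly,
\begin{equation*}
\nu\, \de J_u(v) + \xi\big(\de\End_u(v)\big)=0 \quad\text{for all } v .
\end{equation*}

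\textbf{Step 2 (the two cases).} Set $\lambda_0\coloneq -\xi\circ \de R_{\gamma_u(1)}\in\g^*$, and use $\de J_u(v)=\int_0^1\langle u,v\rangle\,\de t$ together with \eqref{eq:differential_endpoint}. The relation becomes
\begin{equation*}
\nu\int_0^1\langle u(t),v(t)\rangle\,\de t=\lambda_0\Big(\int_0^1 \Ad_{\gamma_u(t)} v(t)\,\de t\Big) \quad\text{for all } v .
\end{equation*}
\begin{itemize}
\item If $\nu=0$, then $\lambda_0\neq0$ and the relation is exactly \eqref{eq:lambda_abnormal}. So $\gamma_u$ is abnormal, which is case (ii).
\item If $\nu\neq0$, normalize to $\nu=1$ and set $\lambda=\lambda_0$. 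Taking $v=\phi X$ with $X\in V$ and $\phi$ an arbitrary scalar function, the fundamental lemma of calculus of variations gives $\langle u(t),X\rangle=\lambda(\Ad_{\gamma_u(t)}X)$ for a.e.\ $t$.
\end{itemize}

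\textbf{Step 3 (from a.e.\ to every $t$).} The right-hand side $t\mapsto\lambda(\Ad_{\gamma_u(t)}X)$ is continuous, and $u$ is only defined up to null sets. So after choosing the continuous representative of $u$, the identity holds for every $t\in[0,1]$, which gives case (i).

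\textbf{Main obstacle.} Step 1 is the only delicate point: the domain is infinite-dimensional, and a direct inverse function argument there is not needed. It is avoided by reducing to a finite-dimensional affine slice on which the restricted differential is already surjective. This is the standard proof behind \cite[Theorem~7.3.3]{donne2024metricliegroupscarnotcaratheodory}.
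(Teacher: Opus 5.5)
Your proposal is correct and follows the route the paper indicates: the paper does not prove the statement itself but quotes it from Le Donne's monograph as a direct consequence of the Lagrange multiplier rule for the pair (energy, end-point map), and your finite-dimensional-slice argument is a valid proof of that rule in this setting. One detail is missing: for $\nu=0$ you only obtain \eqref{eq:lambda_abnormal} for $v\in L^2([0,1],V)$, and this extends to all $v\in L^1([0,1],V)$ by density, because $v\mapsto\int_0^1\Ad_{\gamma_u(t)}v(t)\,\de t$ is continuous on $L^1$.
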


Curves satisfying (i) in \cref{thm:pont} are called \emph{normal trajectories} and they are known to be locally energy-minimizing, see \cite{MR1867362}. Nevertheless, there are energy-minimizing curves that are not normal trajectories and therefore they are abnormal curves, see \cite{MR1297101}. We refer to abnormal energy-minimizing curves that are not normal as \emph{strictly-abnormal minimizers}. Strictly-abnormal minimizers satisfy a second-order condition that depends solely on the polarization.

\begin{definition}[Goh-abnormal] \label[definition]{def:goh}
    Let $(G,V)$ be a polarized group. We say that a curve $\gamma_u$ is \emph{Goh-abnormal} if $u \in L^1([0,1],V)$ is a critical point for the end-point map of the polarized group $(G,V+[V,V])$. The \emph{Goh-abnormal set} is defined as follows:
    \begin{equation} \label{eq:goh_abnormal_set}
    \abn^{Goh}(G,V) \coloneq q \Set{ \gamma_u(1) : \text{$\gamma_u$ is a Goh-abnormal curve}}.
\end{equation}
\end{definition}

\begin{theorem}[Goh condition {\cite[Theorem~12.12]{MR3971262}}]
    Let $(G,d)$ be a sub-Riemannian group. If a curve $\gamma_u \colon [0,1] \to G$ is a strictly-abnormal minimizer, then $\gamma_u$ is Goh-abnormal.
\end{theorem}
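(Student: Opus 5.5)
The plan is the classical second-order argument of Goh and Agrachev--Sarychev, specialized to the group setting where every quantity is explicit. Fix a strictly-abnormal minimizer $\gamma_u$ and fix a scalar product on $V$. I would first move to the extended end-point map
\begin{equation*}
F \colon L^2([0,1],V) \to G \times \R, \qquad F(w) = \bigl(\End(w), J(w)\bigr), \qquad J(w) = \int_0^1 \langle w(t),w(t)\rangle \, \de t .
\end{equation*}
Since $\gamma_u$ minimizes energy among curves with the same end-point, $F$ is not locally open at $u$.

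The first step is to invoke the Agrachev--Sarychev second-order openness criterion. Suppose that for every nonzero covector $\mu$ annihilating $\operatorname{Im}\de F_u$, the quadratic form $\mu\circ \de^2F_u$ restricted to $\ker \de F_u$ had negative index at least the corank of $\de F_u$. Then $F$ would be locally open at $u$. Non-openness therefore produces some $\mu=(\lambda,\nu)\neq 0$ annihilating $\operatorname{Im}\de F_u$ whose Hessian has \emph{finite} negative index on $\ker \de F_u$.

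If $\nu\neq 0$, the annihilation condition is exactly condition (i) of \cref{thm:pont}, so $\gamma_u$ would be normal. Strict abnormality thus forces $\nu=0$. Consequently $\lambda\in\g^*\setminus\{0\}$, $\gamma_u$ is $\lambda$-abnormal, and
\begin{equation*}
Q(v) \coloneq \lambda\bigl(\de^2\End_u(v,v)\bigr)
\end{equation*}
has finite negative index on $\ker \de\End_u \cap \ker \de J_u$, which is a subspace of finite codimension.

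The second step is to compute $Q$. Differentiating \eqref{eq:differential_endpoint} once more (after right-translating to $\g$), one expects a formula of the form
\begin{equation*}
Q(v) = \lambda\left( \int_0^1\!\!\int_0^t \bigl[\Ad_{\gamma_u(s)} v(s), \Ad_{\gamma_u(t)} v(t)\bigr]\, \de s\, \de t \right),
\end{equation*}
up to a harmless sign or normalization. Equivalently, writing $w(t)=\int_0^t \Ad_{\gamma_u(s)}v(s)\,\de s$, the form $Q$ becomes $\lambda$ applied to the bracket of $w$ with $\dot w$.

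The third step is Goh's oscillation trick. Fix $t_0\in(0,1)$ and $X,Y\in V$. For small $\varepsilon$, consider controls
\begin{equation*}
v(t)=a\!\left(\tfrac{t-t_0}{\varepsilon}\right)X + b\!\left(\tfrac{t-t_0}{\varepsilon}\right)Y,
\end{equation*}
with $a,b$ compactly supported and with zero mean, so that $(\int a,\int b)$ traces a closed loop of signed area $A$. A rescaling computation shows that the leading term is
\begin{equation*}
Q(v)= \varepsilon^2 A\, \lambda\bigl(\Ad_{\gamma_u(t_0)}[X,Y]\bigr) + o(\varepsilon^2).
\end{equation*}
The first-order terms vanish because of \eqref{eq:abn-equivalence} and the zero-mean condition.

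Now suppose $\lambda(\Ad_{\gamma_u(t_0)}[X,Y])\neq 0$. By continuity this quantity is nonzero on a small interval around $t_0$. Choosing the sign of $A$ appropriately, we get infinitely many such perturbations with disjoint supports in that interval, each making $Q$ negative. Since $Q$ vanishes on products of disjointly supported perturbations to leading order, they span an infinite-dimensional subspace on which $Q$ is negative definite. Its intersection with the finite-codimension kernel is still infinite-dimensional, which contradicts finiteness of the index. Hence $\lambda(\Ad_{\gamma_u(t)}[X,Y])=0$ for all $t$ and all $X,Y\in V$.

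Combined with \eqref{eq:abn-equivalence}, this says that $\lambda$ annihilates $\Ad_{\gamma_u(t)}(V+[V,V])$ for all $t$. That is exactly $\lambda$-abnormality for $(G,V+[V,V])$, i.e.\ $\gamma_u$ is Goh-abnormal.

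The main obstacle is the third step. One must control the cross terms between different bumps and the $o(\varepsilon^2)$ remainder, which come from the variation of $\Ad_{\gamma_u(t)}$ over the support. This has to be uniform enough that negativity survives both after taking linear combinations and after projecting onto the kernel constraints. I would handle this by choosing bumps supported on pairwise separated intervals, so that cross terms are of strictly smaller order. Finite codimension is then dealt with by a dimension count: a $k$-dimensional negative subspace with $k$ larger than the codimension plus the claimed index already gives the contradiction.
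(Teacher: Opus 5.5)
The paper gives no proof of this statement; it imports it from Agrachev--Barilari--Boscain. Your argument is the standard proof behind that reference, specialized to groups, and it is correct: second-order openness for the extended end-point map, strict abnormality forcing $\nu=0$, the Chen--Magnus expression for the Hessian, and Goh's localized zero-mean loops giving infinite negative index.

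The one step worth writing out is the cross term. For perturbations $v_1,v_2$ supported in intervals $I_1<I_2$, the polar form of $Q$ is, up to your normalization, $\lambda\bigl([\int_0^1\Ad_{\gamma_u(s)}v_1(s)\,\de s,\int_0^1\Ad_{\gamma_u(s)}v_2(s)\,\de s]\bigr)$. Its two arguments are $o(\varepsilon)$ by the zero-mean condition and the continuity of $t\mapsto\Ad_{\gamma_u(t)}$, so cross terms are $o(\varepsilon^2)$, and the finite-$N$ dimension count of your last paragraph then closes the argument.
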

 
We say that a sub-Riemannian group $(G,d)$ with structure $(G,V,\langle\cdot,\cdot\rangle)$ satisfies the \emph{minimizing Sard property} if the \emph{minimizing-abnormal set}, defined as
\begin{equation} \label{eq:min_abnormal_set}
    \abn^{min}(G,V) \coloneq q \Set{ \gamma_u(1) : \text{$\de \End_u$ is singular, $\gamma_u$ energy-minimizer}},
\end{equation}
has zero measure. A direct application of the Morse-Sard theorem in the finite-dimensional case ensures that the \emph{normal-abnormal set}, defined as 
\begin{equation} \label{eq:norm_abnormal_set}
    \abn^{nor}(G,d) \coloneq q \Set{ \gamma_u(1) : \text{$\de \End_u$ is singular, $\gamma_u$ normal trajectory}},
\end{equation}
is a subanalytic set of dimension at most $\dim(G)-1$. We thus obtain the following result.

\begin{proposition}
    Let $(G,V)$ be a polarized group and assume that the Goh-abnormal set $\abn^{Goh}(G,V)$ has zero measure. Then, for every scalar product $\langle\cdot,\cdot\rangle$ on $V$, the sub-Riemannian group $(G,d)$ with structure $(G,V,\langle\cdot,\cdot\rangle)$ satisfies the minimizing Sard property.
\end{proposition}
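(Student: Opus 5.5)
The plan is to decompose the minimizing-abnormal set using the Pontryagin Maximum Principle (\cref{thm:pont}) and show that each piece has zero measure. Let $\gamma_u$ be an energy-minimizing curve with $\de\End_u$ singular. By \cref{thm:pont}, either $\gamma_u$ is a normal trajectory, or it is not normal. In the second case $\gamma_u$ is a strictly-abnormal minimizer, and the Goh condition theorem shows it is Goh-abnormal. This gives the inclusion
\begin{equation*}
\abn^{min}(G,V)\subseteq \abn^{nor}(G,d)\cup \abn^{Goh}(G,V).
\end{equation*}
The second set has zero measure by hypothesis. Since the hypothesis concerns only the polarization $V$, it holds for every choice of scalar product $\langle\cdot,\cdot\rangle$ on $V$.

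It then remains to show that $\abn^{nor}(G,d)$ is negligible. Here I would use the finite-dimensional Morse--Sard argument cited just before the statement. Normal trajectories are precisely the projections of the solutions of the Hamiltonian system associated with $H(g,\lambda)=\tfrac12\sum_i \lambda(\Ad_g X_i)^2$, where $(X_i)$ is an orthonormal basis of $V$. In left-trivialized coordinates these curves are parametrized by their initial covector $\lambda_0\in\g^*$ through the sub-Riemannian exponential map $\exp\colon \g^*\to G$, which is real-analytic.

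Let $\gamma_u$ be a normal trajectory with covector $\lambda_0$, and suppose $\de\End_u$ is singular. Then $\gamma_u(1)=\exp(\lambda_0)$, and I claim this endpoint is a critical value of $\exp$. To see this, note that $\exp$ factors as $\lambda\mapsto u_\lambda\mapsto \End(u_\lambda)$. Hence $\operatorname{im}\de\exp_{\lambda_0}\subseteq \operatorname{im}\de\End_{u_{\lambda_0}}$, and the latter is a proper subspace.

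The classical Sard theorem, applied to the smooth map $\exp$ between manifolds of the same dimension $\dim G$, then yields that its set of critical values has measure zero. Consequently $\abn^{nor}(G,d)$ is negligible, and $\abn^{min}$ is contained in a union of two null sets.

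The only delicate point is the claim that an abnormal endpoint of a normal trajectory is a critical value of the exponential map. This needs the factorization above and the smooth dependence of $u_\lambda$ on $\lambda$ in $L^1$, which follows from analytic dependence of ODE solutions on initial data. Everything else is routine bookkeeping with the cited results.
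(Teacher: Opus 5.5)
Your proof is correct and follows essentially the same route as the paper. The Pontryagin Maximum Principle splits $\abn^{min}$ into normal-abnormal endpoints and strictly-abnormal minimizers; the former is negligible by Sard's theorem applied to the sub-Riemannian exponential map, and the latter lies in $\abn^{Goh}(G,V)$ by the Goh condition. Your factorization $\lambda\mapsto u_\lambda\mapsto\End(u_\lambda)$ usefully makes explicit the step the paper leaves implicit, namely why abnormal endpoints of normal trajectories are critical values of the exponential map.
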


In the case of 2-dimensional polarizations, the Goh condition is automatically verified, as explained by the following result, which is a direct consequence of \eqref{eq:diff}.

\begin{proposition} \cite[Proposition~7.3.5]{donne2024metricliegroupscarnotcaratheodory}\label[proposition]{prop:goh_2}
    Let $(G,V)$ be a polarized group such that $\dim(V)=2$, then a non-constant horizontal curve $\gamma_u$ is abnormal if and only if it is Goh-abnormal.  
\end{proposition}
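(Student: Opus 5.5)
We will use \eqref{eq:diff} directly. Write $V=\spn\{X_1,X_2\}$ and $u(t)=u_1(t)X_1+u_2(t)X_2$. Definition~\ref{def:goh} says $\gamma_u$ is Goh-abnormal when it is $\lambda$-abnormal for the polarization $V+[V,V]$. By \eqref{eq:abn-equivalence}, this means there is $\lambda\neq 0$ with $P^\lambda_X(\gamma_u(t))=0$ for every $X\in V+[V,V]=\spn\{X_1,X_2,[X_1,X_2]\}$ and every $t\in[0,1]$. One implication is trivial: this condition contains the condition for $X\in V$, so a Goh-abnormal curve is abnormal with the same $\lambda$. The point to prove is the converse.

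Assume $\gamma_u$ is $\lambda$-abnormal, so $f_i(t):=P^\lambda_{X_i}(\gamma_u(t))\equiv 0$ on $[0,1]$ for $i=1,2$. Each $f_i$ is absolutely continuous, since it is a smooth function composed with an absolutely continuous curve. Hence $\dot f_i=0$ almost everywhere. By \eqref{eq:diff}, at a.e. $t$ we have
\begin{equation*}
\dot f_1(t)=P^\lambda_{[u(t),X_1]}(\gamma_u(t))=-u_2(t)\,h(t),\qquad \dot f_2(t)=u_1(t)\,h(t),
\end{equation*}
where $h(t):=P^\lambda_{[X_1,X_2]}(\gamma_u(t))$ is continuous. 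So $u_1(t)h(t)=u_2(t)h(t)=0$ for a.e. $t$, which means $h(t)=0$ at almost every $t$ where $u(t)\neq 0$.

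The remaining task, and the only delicate step, is to upgrade this to $h\equiv 0$ on all of $[0,1]$. We may first reparametrize $\gamma_u$ to remove the intervals where $u=0$. Such a reparametrization changes neither the abnormality condition, which only concerns the image set $\gamma_u([0,1])$, nor the endpoint. Alternatively, we can argue directly. Consider the open set $U=\{t: h(t)\neq0\}$. On each component of $U$ we have $u=0$ almost everywhere, so $\gamma_u$ is constant there. By continuity, that constant value equals $\gamma_u$ at a boundary point of the component. At such a boundary point either $h=0$ already, or the boundary point is an endpoint of $[0,1]$.

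If $U$ were all of $[0,1]$, the curve would be constant, which the hypothesis excludes. So each component of $U$ has at least one boundary point $t_0$ inside $[0,1]$ with $h(t_0)=0$. The curve is constant on the closure of the component and equal to $\gamma_u(t_0)$. Then $h=P^\lambda_{[X_1,X_2]}\circ\gamma_u$ takes the value $h(t_0)=0$ throughout the component, which contradicts the definition of $U$. Hence $U=\emptyset$.

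It follows that $P^\lambda_X(\gamma_u(t))=0$ for all $X\in V+[V,V]$ and all $t\in[0,1]$. By \eqref{eq:abn-equivalence} applied to the polarized group $(G,V+[V,V])$, the curve $\gamma_u$ is Goh-abnormal, with the same covector $\lambda$.
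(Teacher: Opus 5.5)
Your proof is correct and follows the paper's route: the paper presents the statement as a direct consequence of \eqref{eq:diff}, deferring to \cite[Proposition~7.3.5]{donne2024metricliegroupscarnotcaratheodory}. Your component argument on $\{h\neq 0\}$ supplies the one detail left implicit there, namely how non-constancy lets you pass from $h=0$ wherever $u\neq 0$ to $h\equiv 0$ on $[0,1]$; the optional reparametrization remark is not needed (and the zero set of $u$ need not be a union of intervals), so it is right that you did not rely on it.
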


\subsection{Mixed coordinates in solvable Lie groups}\label{sec:mixed} A group $G$ is \emph{$s$-step solvable} if its derived series $(D^n(G))_{n \in \N}$, recursively defined as
\begin{equation*}
    D^0(G)\coloneq  G, \quad D^{n+1}(G)\coloneq  [D^n(G),D^n(G)] = \langle ghg^{-1}h^{-1}\colon g,h \in D^n(G)\rangle, \quad \text{for $n \in \N$,}
\end{equation*}
satisfies $D^s(G)=\{\id\}$. 

\begin{definition}[Metabelian group]\label[definition]{def:metabelian}
    A group $G$ is \emph{metabelian} if it is $2$-step solvable. Equivalently, $G$ is metabelian if its derived subgroup
   $
    G' \coloneq q  D^1(G)$
    is abelian.
\end{definition}

It is well known that a connected Lie group $G$ is $s$-step solvable if and only if its associated Lie algebra $\g$ is, that is, if and only if its derived series $(D^n(\g))_{n \in \N}$, recursively defined as
\begin{equation*}
    D^0(\g)\coloneq  \g, \quad D^{n+1}(\g) \coloneq  [D^n(\g),D^n(\g)]= \spn\Set{[X,Y]\colon X,Y \in D^n(\g)}, \quad \text{for $n \in \N$,}
\end{equation*}
satisfies $D^s(\g)=\{0\}$, see \cite[Theorem~11.2.5]{hilgert2011structure}. Similarly, a Lie algebra $\g$ is \emph{metabelian} if its derived subalgebra $\g' \coloneq  D^1(\g)$ is abelian. 

We recall that a subgroup $H \subseteq G$ is an \emph{integral subgroup} if it is of the form $H = \langle \exp(\mathfrak{h})\rangle$ for some Lie subalgebra $\mathfrak{h} \subseteq \g$, and that it is normal if and only if $\mathfrak{h}$ is an ideal. We make use of the following fact, for which refer to \cite[Proposition~11.2.4 and Proposition~11.2.15]{hilgert2011structure}. 

\begin{remark} \label[remark]{rem:integral}
    In a connected Lie group $G$, elements of the derived series $(D^n(G))_{n \in \N}$ are normal integral subgroups and $D^n(G) = \langle \exp(D^n(\g))\rangle$, for every $n \in \N$. Moreover, if $G$ is solvable and simply connected, then integral subgroups are closed and simply connected. In particular, they are Lie subgroups.
\end{remark}

Let $G$ be a simply connected, solvable Lie group and denote by $\g$ its Lie algebra. We point out that, in contrast with the nilpotent case, the exponential map $\exp \colon \g \to G$ may fail to be a global diffeomorphism. Nevertheless, if $B \subseteq \g$ is a vector subspace in direct sum with $\g'\coloneq  [\g,\g]$, then the map 
\begin{equation} \label{eq:mixed_coordiantes}
    \Phi \colon \g \cong \g' \times B \to G, \quad (a,b) \mapsto \exp(a)\exp(b)
\end{equation}
is a global diffeomorphism, see \cite[Lemma~14.3.6]{hilgert2011structure}. In this regard, we stress that $G$ is solvable, thus $G'\coloneq  \exp(\g')$ is a nilpotent Lie subgroup, see \cite[Corollary~5.4.12]{hilgert2011structure}. Being $G'$ also simply connected from \cref{rem:integral}, we get that the map $\exp \colon \g' \to G'$ is a global diffeomorphism. If $g = \Phi(a,b)$, we say that $(a,b)$ are the \emph{mixed coordinates} of $g$. 

\begin{remark} \label[remark]{poly_invariance}
   Let $G$ be a metabelian, simply connected Lie group with Lie algebra $\g$ and mixed coordinates as in \eqref{eq:mixed_coordiantes}.
   Fix $a,b\in \g'\times B$.
   We claim that
    \begin{equation*}
        P^\lambda_Y(a,b) = P^\lambda_Y(0,b), \quad \text{for every $\lambda \in \g^*$  and $Y \in \g'$.}
    \end{equation*}
    Indeed, we recall that $\Ad:G\to \mathrm{Aut}(\g)$ is a group homomorphism. Therefore, omitting the map $\Phi$ from the notation, we get $$P^\lambda_Y(a,b) = \lambda \left( \Ad_{(a,b)}Y\right) = \lambda \left( \Ad_{\exp(a)}\Ad_{\exp(b)} Y \right) = \lambda \left(\Ad_{\exp(b)} Y \right) = P^\lambda_Y(0,b),$$ 
    where we used that, since $Y \in \g'$, then $\Ad_{\exp(b)} Y \in \g'$, together with the fact that $G'$ is abelian; thus, its adjoint map is trivial.     
    \end{remark}

\subsection{End-point map in solvable Lie groups} 
Let $G$ be a simply connected, solvable Lie group with Lie algebra $\mathfrak{g}$. Fix a subspace $B \subseteq \mathfrak{g}$ such that $\mathfrak{g} = \mathfrak{g}' \oplus B$, and consider the mixed coordinates on $\mathfrak{g}' \times B$ as in \eqref{eq:mixed_coordiantes}. For each $(a,b) \in \mathfrak{g}' \times B$, let $L_{(a,b)} \colon \mathfrak{g} \to \mathfrak{g}$ denote the left-translation map expressed in mixed coordinates, specifically $L_{(a,b)} \coloneq  \Phi^{-1} \circ L_{\Phi(a,b)} \circ \Phi$. 
We denote its differential at the origin by $\de L_{(a,b)} \colon \mathfrak{g} \to \mathfrak{g}$.
We have that the mixed coordinates of a curve $\gamma_u$ associated with a control $u \in L^1([0,1], \mathfrak{g})$ are determined by the following ODE:
\begin{equation} \label{eq:ode}
\frac{\de}{\de t} \Phi^{-1}(\gamma_u(t)) = \de L_{\Phi^{-1}(\gamma_u(t))}(u(t)), \quad \Phi^{-1}(\gamma_u(0)) = 0.
\end{equation}
Equivalently, in integral form:
\begin{equation} 
\label{eq:integral_ode}\Phi^{-1}(\gamma_u(t)) = \int^t_0 \de L_{\Phi^{-1}(\gamma_u(\tau))}(u(\tau)) \, \de \tau.
\end{equation}
Notably, in the metabelian setting, equation \eqref{eq:ode} admits an explicit solution. In the following discussion, we provide the details in the case where $u \in L^1([0,1], B)$, which is sufficient for our purposes. First, we define the following entire functions

\begin{equation} \label{eq:power_series}
    \varphi(z) \coloneq q \frac{1 - e^{-z}}{z} = \sum_{k=0}^\infty \frac{(-1)^k}{(k+1)!}z^k,  \quad \rho(z) \coloneq q e^z - \varphi(-z) = \sum_{k=0}^\infty \frac{k}{(k+1)!}z^k.
\end{equation}

\begin{lemma}
    For each $(a,b) \in \mathfrak{g}' \times B$ and $X \in B$, it holds
    \begin{equation} \label{eq:differential}
        \de L_{(a,b)}(X)= ( \rho(\ad_b)X,X) \in \g' \times B,
    \end{equation}
    where for $Y,Z\in \g$, we denote $\ad_YZ\coloneq [Y,Z]$.
    In particular, if $(a(t),b(t))$ are the mixed coordinates of a curve $\gamma_u$ with $u \in L^1([0,1], B)$, then
    \begin{equation} \label{eq:curve_coordinates}
\begin{cases}
a(t) = \int^t_0 \rho(\ad_{b(\tau)}) u(\tau) \de \tau, \\
b(t) = \int_0^t u(\tau) \de \tau.
\end{cases}
\end{equation}
\end{lemma}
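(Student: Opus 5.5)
The plan is to compute the left translation in mixed coordinates to first order along a curve through the origin, and then feed the result into \eqref{eq:integral_ode}. Fix $(a,b) \in \g' \times B$ and $X \in B$. In mixed coordinates the origin is $\Phi(0,0)=\id$, and the vector $X \in B$ corresponds to the tangent vector of $s \mapsto (0,sX)$, that is, of $s \mapsto \exp(sX)$. Hence $\de L_{(a,b)}(X)$ is the derivative at $s=0$ of $\Phi^{-1}\bigl(\exp(a)\exp(b)\exp(sX)\bigr)$.

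\textbf{Step 1: rewrite the product in mixed form.} For small $s$ I write
\begin{equation*}
\exp(b)\exp(sX) = \exp(c(s))\exp(b+sX), \qquad \exp(c(s)) \coloneq \exp(b)\exp(sX)\exp(-(b+sX)).
\end{equation*}
The element $\exp(c(s))$ lies in $G'$, because its image in the abelian quotient $G/G'$ is trivial. Since $\exp\colon \g'\to G'$ is a diffeomorphism, $c(s)\in\g'$ is well defined and smooth in $s$, with $c(0)=0$. Because $G'$ is abelian, $\exp(a)\exp(c(s))=\exp(a+c(s))$. Therefore
\begin{equation*}
\Phi^{-1}\bigl(\exp(a)\exp(b)\exp(sX)\bigr) = (a+c(s),\, b+sX),
\end{equation*}
and the derivative at $s=0$ is $(c'(0),X)$. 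So it remains to identify $c'(0)$.

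\textbf{Step 2: compute $c'(0)$.} Set $h(s)\coloneq\exp(b)\exp(sX)\exp(-b-sX)$, so $h(0)=\id$. By the product rule,
\begin{equation*}
h'(0) = \Ad_{\exp b}X + \de L_{\exp b}\Bigl(\tfrac{\de}{\de s}\big|_{s=0}\exp(-b-sX)\Bigr).
\end{equation*}
The differential of the exponential is $\de\exp_Y(Z)=\de L_{\exp Y}\,\varphi(\ad_Y)Z$, with $\varphi$ as in \eqref{eq:power_series}. Taking $Y=-b$ and $Z=-X$, the second term becomes $-\varphi(-\ad_b)X$. Using $\Ad_{\exp b}=e^{\ad_b}$, I get
\begin{equation*}
c'(0)=h'(0)=e^{\ad_b}X-\varphi(-\ad_b)X=\rho(\ad_b)X.
\end{equation*}
As a consistency check, this vector lies in $\g'$: we have $\rho(0)=0$, so every term of the series for $\rho(\ad_b)X$ lies in the image of $\ad_b$. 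This proves \eqref{eq:differential}.

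\textbf{Step 3: derive the curve equations.} For $u\in L^1([0,1],B)$, I plug \eqref{eq:differential} into the ODE \eqref{eq:ode} with $(a,b)=\Phi^{-1}(\gamma_u(t))$. The $B$-component gives $\dot b=u$. The $\g'$-component gives $\dot a=\rho(\ad_{b(t)})u(t)$. Integrating from the initial condition $0$ yields \eqref{eq:curve_coordinates}.

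The only delicate point is Step 2. There I must get the sign conventions in the formula for $\de\exp$ right, since $\varphi(z)=(1-e^{-z})/z$ corresponds to left trivialization. I must also justify that $c(s)$ stays in $\g'$; this follows from the quotient argument in Step 1.
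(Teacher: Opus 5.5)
Your proof is correct and matches the paper's argument. You factor $\exp(b)\exp(sX)=\exp(c(s))\exp(b+sX)$ with $\exp(c(s))\in G'$ via the abelian quotient $G/G'$, absorb it into the $\g'$-coordinate using commutativity of $G'$, and compute $c'(0)=e^{\ad_b}X-\varphi(-\ad_b)X=\rho(\ad_b)X$ from the left-trivialized formula for $\de\exp$; the curve equations then follow from \eqref{eq:integral_ode}, exactly as in the paper.
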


\begin{proof}
    First, we observe that $\exp(b)\exp(tX)\exp(-b-tX) \in G'$, for every $t \in [0,1]$, since it is a trivial element in the abelian quotient $G/G'$. Next, we compute
    \begin{align*}
        \ddt \bigg|_{t=0 } \exp(b)\exp(tX)\exp(-b-tX) &= \Ad_{\exp(b)}X - \de L_{\exp(b)} \de L^{-1}_{\exp(b)}\left(\varphi(-\ad_b)\right)X \\
        &= e^{\ad_b}X - \varphi(-\ad_b)X = \rho(\ad_b)X,
    \end{align*}
    where in the first equality we used the formula for the differential of the exponential function, see \cite[Proposition~3.4.2]{hilgert2011structure}. Since $G'$ is abelian and $a, \rho(\ad_b)X \in \g'$, we infer that
    \begin{align*}
        L_{\Phi(a,b)}\exp(tX) &= \exp(a)\exp(b)\exp(tX) \\ &= \exp(a+ t \rho(\ad_b)X + o(t))\exp(b+tX) \\
        &= \Phi(a+ t \rho(\ad_b)X + o(t),b+tX).
    \end{align*}
    By differentiating the latter equation at $t=0$ we obtain \eqref{eq:differential}. The second part of the statement follows from \eqref{eq:integral_ode} by using \eqref{eq:differential}.    
\end{proof}    

\section{Multiplicative integrals and tree-like curves} \label{sec:tree}
In our discussion, it is useful to interpret the end-point map in terms of the multiplicative integral. If what follows, every curve is assumed to be absolutely continuous.

\begin{definition}[Multiplicative integral]
\label{def:multiplicative_integral}
Let $G$ be a Lie group, with Lie algebra $\g$. The \emph{multiplicative integral} $\gamma \colon [0,1] \to G$ of a curve $\sigma \colon [0,1] \to \g$ is the curve satisfying \eqref{system}, with $u = \dot{\sigma}$. 
\end{definition}

Hereafter, given a curve $\sigma \colon [0,1] \to \g$, we denote by $\overline{\sigma}$ its \emph{inverse}, i.e. $\overline{\sigma}(t) \coloneq  \sigma(1-t)$. Given two curves $\sigma_1, \sigma_2 \colon [0,1] \to  \g$, we say that $\sigma_2$ is compatible with $\sigma_1$ if $\sigma_1(1)=\sigma_2(0)$, and in that case we define their \emph{concatenation} $\sigma_1 * \sigma_2$ as the curve
\begin{equation*}
    \sigma_1 * \sigma_2 \colon [0,1]\to \g \quad  t \mapsto \begin{cases}
        \sigma_1(2t)  & \text{if $t \le \tfrac{1}{2}$,} \\ \sigma_2(2t-1) &\text{if $t > \tfrac{1}{2}$.}
    \end{cases}
\end{equation*}

We summarize how multiplicative integrals behave under concatenation and inverse of curves.

\begin{remark} \label{rem:path}
 Let $\gamma \colon [0,1] \to G$ be the multiplicative integral of some curve $\sigma \colon [0,1] \to \g$. We claim that the multiplicative integral of the inverse curve $\overline{\sigma}$ of $\sigma$ is the curve 
\begin{equation*}
    \overline{\gamma} \colon [0,1] \to G, \quad t \mapsto \gamma(1)^{-1}\gamma(1-t).
\end{equation*}
Indeed, $\overline{\gamma}(0) = \id$ and for a.e. $t \in [0,1]$ we get
\[
\ddt \gamma(1)^{-1}\gamma(1-t) = \de L_{\gamma(1)^{-1}}\left(\ddt \gamma(1-t) \right) = \de L_{\gamma(1)^{-1}\gamma(1-t)} \left(-\ddt \sigma(1-t) \right) = \de L_{\overline{\gamma}(t)} \left(\ddt \overline{\sigma}(t)\right).
\]
\end{remark}

\begin{remark} \label{rem:path2}
Let $\gamma_1, \gamma_2 \colon [0,1] \to G$ be the multiplicative integrals of some compatible curves $\sigma_1, \sigma_2 \colon [0,1] \to \g$, respectively. We claim that the multiplicative integral $\gamma$ of the concatenation $\sigma_1 * \sigma_2$ satisfies 
\begin{equation*}
    \gamma\colon [0,1] \to G, \quad t \mapsto \begin{cases}
       \gamma_1(2t)  & \text{if $t \le \tfrac{1}{2}$,} \\ \gamma_1(1)\gamma_2(2t-1) &\text{if $t > \tfrac{1}{2}$.} \end{cases}
\end{equation*}
Indeed, $(\gamma)(0) = \id$ and for a.e. $t \in [0,1]$ we get
\begin{align*}
    \ddt \gamma_1(2t) &= \de L_{\gamma_1(2t)}(2\dot{\sigma}_1(2t))= \de L_{(\gamma)(t)}\left(\ddt (\sigma_1*\sigma_2)(t)\right), & &\text{if $t \le \tfrac{1}{2}$,} \\
        \ddt \gamma_1(1)\gamma_2(2t-1) &= \de L_{\gamma_1(1)} \de L_{\gamma_2(2t-1)}(2\dot{\sigma}_2(2t-1)) = \de L_{(\gamma)(t)}\left(\ddt (\sigma_1*\sigma_2)(t)\right), & &\text{if $t > \tfrac{1}{2}$.} 
\end{align*}
In particular, for the endpoint it holds $\gamma(1)=\gamma_1(1)\gamma_2(1)$.
\end{remark}

We next introduce the notion of tree-like equivalence between horizontal curves, which provides a sufficient criterion for two rectifiable curves to have the same end-point.

\begin{definition}[Tree-like curve and tree-equivalence]
    Let $(W,\lVert \cdot \rVert)$  be a normed vector space. A (closed) curve $\sigma \colon [0,1] \to W$ is \emph{tree-like} if there exists a continuous function $h \colon [0,1] \to [0,+\infty)$, with $h(0)=h(1)=0$, such that 
    \begin{equation*}
        \lVert \sigma(t) -\sigma(s) \rVert \le h(t) + h(s) - 2 \inf_{\tau \in [s,t]} h(\tau), \quad \text{for every $0 \le s \le t \le 1$.}
    \end{equation*}
    Two curves $\sigma_1, \sigma_2 \colon [0,1] \to W$, with $\sigma_1(1)=\sigma_2(1)$, are \emph{tree-equivalent} if $\sigma_1 * \overline{\sigma}_2$ is tree-like.
\end{definition}

Tree-like curves were introduced in \cite{MR2630037} to characterize paths of bounded variation having the same signature, generalizing a result by Chen \cite{MR106258}.
The deep characterization result by Hambly and Lyons has an equivalent formulation in terms of multiplicative integrals, see \cite[Corollary~1.7]{MR2630037} where the authors use the terminology ``development of a path'' in place of ``multiplicative integral''. In the present work, however, we only need one direction of this characterization, which is straightforward and it is obtained by combining \cite[Lemma~6.3 and Corollary~6.4]{MR2630037} with \cref{rem:path} and \cref{rem:path2}.

\begin{proposition} \label[proposition]{trivial_integral}
    Let $G$ be a Lie group with Lie algebra $\g$. If $\sigma \colon [0,1] \to \g$ is a tree-like curve and $\gamma$ is the multiplicative integral of $\sigma$, then $\gamma(1)=\id$. In particular, if $\sigma_1,\sigma_2 \colon [0,1] \to \g$ are tree-equivalent and $\gamma_1,\gamma_2$ are their respective multiplicative integrals, then $\gamma_1(1)=\gamma_2(1)$.
\end{proposition}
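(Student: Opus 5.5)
The plan is to obtain the first claim from the cited results of Hambly and Lyons together with \cref{rem:path} and \cref{rem:path2}, and then deduce the second claim from the first. First I reduce to the case of piecewise linear paths. By \cite[Lemma~6.3]{MR2630037}, a tree-like curve $\sigma$ of bounded variation can be approximated, in a way controlled by its height function $h$, by curves obtained from the constant curve through finitely many insertions and deletions of back-and-forth segments of the form $\tau * \overline{\tau}$. Then \cite[Corollary~6.4]{MR2630037} says that any continuous multiplicative functional that is invariant under such cancellations is trivial on tree-like paths. The development map $\sigma \mapsto \gamma(1)$ fits this framework, provided I check two things.

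The first check is that the development is multiplicative under concatenation and inverts under reversal. For concatenation, \cref{rem:path2} gives that the endpoint of the multiplicative integral of $\sigma_1 * \sigma_2$ is $\gamma_1(1)\gamma_2(1)$. For reversal, \cref{rem:path} gives that the endpoint for $\overline{\sigma}$ is $\gamma(1)^{-1}\gamma(0)=\gamma(1)^{-1}$. Combining the two, the endpoint for $\tau * \overline{\tau}$ is $\id$. Consequently, inserting or removing such a retracing piece anywhere in a path does not change the endpoint, by multiplicativity applied to the pieces before and after it.

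The second check is continuity of the endpoint with respect to the relevant convergence, namely uniform convergence with bounded total variation. This follows from standard Gronwall-type estimates for the ODE \eqref{system}. A reparametrization-invariance remark is also needed, since \eqref{system} is invariant under absolutely continuous monotone reparametrizations. I expect this continuity and approximation step to be the only delicate point, and it is exactly what the cited results of \cite{MR2630037} supply. Together these give $\gamma(1)=\id$ for every tree-like $\sigma$.

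For the second claim, suppose $\sigma_1$ and $\sigma_2$ are tree-equivalent, so that $\sigma_1 * \overline{\sigma}_2$ is tree-like and, by the first part, its multiplicative integral ends at $\id$. The two remarks compute this endpoint as $\gamma_1(1)\gamma_2(1)^{-1}$. Hence $\gamma_1(1)=\gamma_2(1)$.
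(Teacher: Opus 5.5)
Your proposal follows the paper's route. The paper also obtains the first claim by combining \cite[Lemma~6.3 and Corollary~6.4]{MR2630037} with \cref{rem:path} and \cref{rem:path2}, and it gets the second claim exactly as you do: by those remarks the multiplicative integral of $\sigma_1 * \overline{\sigma}_2$ ends at $\gamma_1(1)\gamma_2(1)^{-1}$, which the first claim forces to be $\id$. Your description of what the cited lemma and corollary contain (approximation by inserting and deleting retracings, continuity of the development) is your own reading of the reference, since the paper does not spell it out.
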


We recall the notion of geometric realization of a graph from \cite[Section~1.6]{MR3753580} and we extend it to subsets of $\R^n$.

\begin{definition} \label[definition]{geo_real}
Let $K$ be a finite 1-dimensional simplicial complex and $Z \subseteq \R^n$, with the subspace topology. A \emph{geometric realization of $Z$ as finite graph} is an homeomorphism $\varphi \colon | K| \to Z$, where $|K|$ denotes the polyhedron of the complex.
\end{definition}

If $Z \subseteq \R^n$ is connected and it admits a geometric realization as finite graph that is absolutely continuous on each 1-simplex, then the induced path metric $d$ on $Z$ is geodesic and compatible with the Euclidean topology. Moreover, the universal cover $\widehat{Z}$ of $Z$ admits a compatible geodesic distance function $\widehat{d}$ such that the covering map $\pi \colon \widehat{Z} \to Z$ is a submetry. The metric space $(\widehat{Z},\widehat{d})$ is a \emph{tree}. 

\begin{proposition} \label[proposition]{prop:homotopy}
    Let $G$ be a Lie group with Lie algebra $\g$, and $Z \subseteq \g$ be connected and admitting a geometric realization as finite graph  that is absolutely continuous on each 1-simplex. If a closed curve $\sigma \colon [0,1] \to Z$ has trivial homotopy (in $Z$), then $\sigma$ is tree-like. In particular, if $\sigma_1,\sigma_2 \colon [0,1] \to Z$, with $\sigma_1(0)=\sigma_2(0)$ and $\sigma_1(1)=\sigma_2(1)$ are homotopically equivalent (in $Z$), then $\gamma_1(1)=\gamma_2(1)$, where $\gamma_1$ and $\gamma_2$ are the multiplicative integrals of $\sigma_1$ and $\sigma_2$, respectively.
    \end{proposition}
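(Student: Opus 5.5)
The plan is to lift the closed curve to the universal cover $\widehat{Z}$, which is a tree, and to build the height function $h$ required in the definition of tree-like from the tree distance to the lifted basepoint.

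\textbf{Step 1: lifting.} Fix the geometric realization $\varphi\colon|K|\to Z$ and the induced geodesic path metric $d$ on $Z$. Since $\sigma$ is absolutely continuous, it is continuous for the Euclidean topology, which agrees with the $d$-topology. Since $\sigma$ is null-homotopic in $Z$, it lifts to a closed curve $\widehat\sigma\colon[0,1]\to\widehat Z$ with $\widehat\sigma(0)=\widehat\sigma(1)=:o$. Define
\[
h(t)\coloneq \widehat d(o,\widehat\sigma(t)).
\]
This is continuous and satisfies $h(0)=h(1)=0$.

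\textbf{Step 2: the tree inequality.} In the tree $(\widehat Z,\widehat d)$, fix $s\le t$. The path $\widehat\sigma|_{[s,t]}$ connects $\widehat\sigma(s)$ to $\widehat\sigma(t)$, so it must pass through every point of the geodesic segment $[\widehat\sigma(s),\widehat\sigma(t)]$. In particular it passes through the branch point $c$ where the geodesics from $o$ to $\widehat\sigma(s)$ and from $o$ to $\widehat\sigma(t)$ separate. Hence
\[
\inf_{[s,t]}h\le \widehat d(o,c),
\]
and therefore
\[
\widehat d(\widehat\sigma(s),\widehat\sigma(t)) = h(s)+h(t)-2\widehat d(o,c)\le h(s)+h(t)-2\inf_{[s,t]}h.
\]

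\textbf{Step 3: comparison with the norm.} It remains to show
\[
\lVert\sigma(t)-\sigma(s)\rVert\le C\,\widehat d(\widehat\sigma(s),\widehat\sigma(t)).
\]
Since $\pi$ is a submetry, $d(\sigma(s),\sigma(t))\le\widehat d(\widehat\sigma(s),\widehat\sigma(t))$. Moreover, the Euclidean distance is bounded by the length of any path in $Z$, so $\lVert x-y\rVert\le d(x,y)$, provided the norm on $\g$ is the one used to measure lengths. With a general norm we get a constant $C$ by equivalence of norms, and rescaling $h$ by $C$ preserves both $h(0)=h(1)=0$ and the inequality. This proves that $\sigma$ is tree-like.

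\textbf{Step 4: the consequence.} If $\sigma_1$ and $\sigma_2$ are homotopic rel endpoints in $Z$, then $\sigma_1*\overline{\sigma}_2$ is a null-homotopic closed curve in $Z$. By Steps 1--3 it is tree-like, so $\sigma_1$ and $\sigma_2$ are tree-equivalent, and \cref{trivial_integral} gives $\gamma_1(1)=\gamma_2(1)$.

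The main obstacle is Step 2. It requires a careful use of the $\mathbb{R}$-tree structure: the existence of the branch point $c$ as the Gromov-product point, and the fact that any continuous path between two points covers the unique arc joining them. I would justify the latter via uniqueness of arcs in trees together with the fact that a path contains an arc between its endpoints. Finiteness of the graph keeps $\widehat Z$ locally finite, so this is standard.
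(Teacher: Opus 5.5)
Your proposal is correct and follows the paper's proof. Both arguments lift the null-homotopic loop to the universal-cover tree and set $h(t)=\widehat d(\widehat\sigma(0),\widehat\sigma(t))$. Both then use that $\widehat\sigma|_{[s,t]}$ passes through the center of the tripod on $\widehat\sigma(0),\widehat\sigma(s),\widehat\sigma(t)$, and bound the norm by the path metric and the submetry $\pi$. The only difference is cosmetic: the paper fixes from the start the Euclidean norm that defines the path metric, so your rescaling constant $C$ is not needed.
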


\begin{proof}
    Fix an Euclidean norm $\lVert \cdot \rVert$ on $\g$ and consider on $Z$ the geodesic distance function $d$ induced by the inclusion $Z \xhookrightarrow{} \g$. We lift this metric to the universal cover $(\widehat{Z},\widehat{d})$, so that the covering map $\pi \colon \widehat{Z} \to Z$ is a submetry. Consider a closed curve $\sigma \colon [0,1] \to Z$ with trivial homotopy (in $Z$), then its lift $\hat{\sigma} \colon [0,1] \to \hat{Z}$ is also a closed curve. We define the function
    \begin{equation} \label{def_h}
        h \colon [0,1] \to [0,+\infty), \quad t \mapsto \hat{d}(\hat{\sigma}(0),\hat{\sigma}(t)).
    \end{equation}
We point out that $h$ is continuous and that $h(0)=h(1)=0$, since $\hat{\sigma}$ is a closed curve. 

Fix $0 \le s \le t \le 1$. Since $(\hat{Z},\hat{d})$ is a tree, the curve $\hat{\sigma}|_{[s,t]}$ passes through the center of the tripod $T(\hat{\sigma}(0),\hat{\sigma}(s),\hat{\sigma}(t))$, see \cite[Lemma~11.13]{MR3753580}. In particular, there is $\tau \in [s,t]$ such that 
\begin{equation*}
    \hat{d}(\hat{\sigma}(t),\hat{\sigma}(s)) = h(t) + h(s) - 2h(\tau).
\end{equation*}
We then estimate
\begin{equation*}
    \lVert \sigma(t) -\sigma(s) \rVert \le d(\sigma(t),\sigma(s)) \le \hat{d}(\hat{\sigma} (t),\hat{\sigma}(s)) \le h(t) + h(s) - 2 \inf_{\tau \in [s,t]} h(\tau).
\end{equation*}
    We thus proved that $\sigma$ is a tree-like curve in $\g$, with function $h$ as in \eqref{def_h}. The second part of the statement follows from \cref{trivial_integral}.
\end{proof} 

\section{Tame geometry and o-minimal structures}\label{sec:tame-sets}

We briefly recall the notions from o-minimal geometry that will be used in the sequel. For general background we refer to \cite{MR1633348}.

We begin with the notion of \emph{o-minimal structure}, starting from the class of semialgebraic sets. Recall that a subset of $\R^n$ is called \emph{semialgebraic} if it is obtained from finitely many polynomial equalities and inequalities by taking finite unions, finite intersections, and complements.

\begin{definition} \label{def:o-minimal_structure}
An \emph{o-minimal structure} (on the ordered field of real numbers) is a family
\[
\mathcal S=(\mathcal S_n)_{n\in\N},
\]
where each $\mathcal S_n$ is a collection of subsets of $\R^n$, satisfying the following properties:
\begin{enumerate}[(i)]
    \item every semialgebraic subset of $\R^n$ belongs to $\mathcal S_n$;
    \item if $A,B\in\mathcal S_n$, then $A\cup B$, $A\cap B$, and $\R^n\setminus A$ belong to $\mathcal S_n$;
    \item if $A\in\mathcal S_m$ and $B\in\mathcal S_n$, then $A\times B\in\mathcal S_{m+n}$;
    \item if $A\in\mathcal S_{n+1}$, then its image under a coordinate projection $\R^{n+1}\to\R^n$ belongs to $\mathcal S_n$;
    \item the sets in $\mathcal S_1$ are exactly the finite unions of points and open intervals.
\end{enumerate}
A subset of $\R^n$ is called \emph{$\mathcal{S}$-definable} if it belongs to $\mathcal S_n$. A map between definable sets is $\mathcal{S}$-\emph{definable} if its graph is $\mathcal{S}$-definable.
\end{definition}

Let $m\in\N$, $U\subseteq\R^m$ be an open neighborhood of $[-1,1]^m$, and $f_0 \colon U\to\R$ be a real-analytic function. The associated \emph{restricted analytic function} is the map $f:\R^m\to\R$ defined by
\[
f(x)=
\begin{cases}
f_0(x), & x\in[-1,1]^m,\\
0, & x\notin[-1,1]^m.
\end{cases}
\]

\begin{definition}[$\Ran$]
The o-minimal structure $\Ran$ is the smallest o-minimal structure containing, in addition to all semialgebraic sets, the graphs of all restricted analytic functions. 
\end{definition}

The $\Ran$-definable sets (resp.\ functions) are called \emph{globally subanalytic} sets (resp.\ functions); see also \cite[Section~1.1]{MR2769219}. Every $\Ran$-definable subset of $\R^n$ can be written as a finite union of sets of the form
\[
\pi\left(
\bigcap_{i=1}^r \Set{(x,y)\in\R^{n+\ell}: t_i(x,y)=0}
\cap
\bigcap_{j=1}^s \Set{(x,y)\in\R^{n+\ell}: u_j(x,y)>0}
\right),
\]
where $\pi:\R^{n+\ell}\to\R^n$ is a coordinate projection, and the $t_i,u_j$ are either polynomials or restricted analytic functions.

\begin{remark} \label{rem:ran}
    We stress the distinction between subanalytic and \emph{globally} subanalytic sets. Indeed, we require globally subanalytic sets to be defined by semialgebraic functions and \emph{restricted} analytic functions. This constraint turns the class defined in this way into an o--minimal family. Note that the zero-level set of the unrestricted sine function is infinite, which would violate axiom (v) of \cref{def:o-minimal_structure}.
\end{remark}

\begin{definition}[$\Ranexp$]
    The o-minimal structure $\Ranexp$ is the smallest o-minimal structure obtained from $\Ran$ by adjoining the (unrestricted) real exponential function. Equivalently, the $\Ranexp$-definable subsets of $\R^n$ are the sets obtained from semialgebraic sets, the graphs of restricted analytic functions, and the  whole graph $\{(x,e^x) : x\in \R\}$ of the real exponential function   by taking finite unions, finite intersections, complements, Cartesian products, and coordinate projections.
\end{definition}

We now recall the basic tame-geometric properties that will be used later on and that follows from o-minimality. 
First, we recall following basic regularity result for continuous definable maps. A simple proof is found in \cite[Lemma 7.6]{MR2823973}.
\begin{lemma}\label[lemma]{lem:ac}
Let $\mathcal{S}$ be an o-minimal structure. Let $I \subset \R$ be a compact interval. An $\mathcal{S}$-definable continuous function $f : I \to \R$ is absolutely continuous.
\end{lemma}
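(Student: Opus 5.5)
The plan is to prove \cref{lem:ac} by combining the monotonicity theorem for o-minimal structures with the fact that a continuous monotone function with finitely many pieces is of bounded variation. The remaining issue is to exclude a singular (Cantor-type) part, and o-minimality gives differentiability almost everywhere in a strong sense that handles this.

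\textbf{Step 1: decompose $I$.} By the monotonicity theorem (see \cite{MR1633348}), since $f$ is $\mathcal S$-definable there is a finite partition $a=t_0<t_1<\dots<t_k=b$ of $I=[a,b]$. On each open interval $(t_{i-1},t_i)$ the function $f$ is $C^1$ and either constant or strictly monotone. Absolute continuity is preserved under gluing finitely many intervals when $f$ is continuous at the junctions. It therefore suffices to treat a single interval $[t_{i-1},t_i]$ on which $f$ is continuous, and $C^1$ and monotone in the interior.

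\textbf{Step 2: absolute continuity on one piece.} Assume $f$ is nondecreasing on $J=[c,d]$ and $C^1$ on $(c,d)$. For $c<s<t<d$, the fundamental theorem of calculus for $C^1$ functions gives
\[
f(t)-f(s)=\int_s^t f'(\tau)\,\de\tau .
\]
Here $f'\ge 0$. Let $s\downarrow c$ and $t\uparrow d$. Continuity of $f$ on $J$ and monotone convergence give
\[
\int_c^d f'=f(d)-f(c)<\infty .
\]
So $f'\in L^1(J)$, and $f(x)=f(c)+\int_c^x f'$ for every $x\in J$. Hence $f$ is absolutely continuous on $J$. The constant case is trivial.

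\textbf{Step 3: glue.} On $I$ we now have $f(x)=f(a)+\int_a^x g$ with $g\in L^1(I)$ defined piecewise as $f'$, using continuity at each $t_i$. This is exactly absolute continuity.

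\textbf{Main obstacle.} The only nontrivial input is the monotonicity theorem, which holds in any o-minimal structure and is why o-minimality excludes pathologies such as the Cantor function. Once it is invoked, the only delicate point is passing to the limit at the endpoints of each piece, where $f'$ may blow up, for instance $\sqrt{x}$ at $0$. Monotone convergence together with continuity of $f$ on the closed interval resolves this.
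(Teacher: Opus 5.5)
Your proof is correct. The monotonicity theorem, refined by the one-variable differentiability theorem so that $f$ is also $C^1$ on each open piece, reduces the problem to finitely many continuous monotone pieces; on each piece, the fundamental theorem of calculus on compact subintervals together with monotone convergence at the endpoints gives $f(x)=f(c)+\int_c^x f'$ with $f'\in L^1$. The paper does not prove this lemma itself and only cites \cite[Lemma 7.6]{MR2823973}, so there is no internal proof to compare against; your argument is the standard one.
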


We recall the standard triangulation theorem for definable sets, see \cite[Ch.~8, (1.7)]{MR1633348} and \cite[Ch.~7, (3.2)]{MR1633348}. In what follows, a \emph{complex} in $\R^n$ is a subset of the simplices of a simplicial complex in $\R^n$. In particular, complexes in $\R^n$ are closed if and only if they are simplicial, thus containing all the faces of their simplices.

\begin{proposition} \label[proposition]{def_structure}
    Let $\mathcal{S}$ be an o-minimal structure. For each $\mathcal{S}$-definable set $A \subseteq \R^n$ there is a finite complex $K$ in $\R^n$ and a $\mathcal{S}$-definable homeomorphism $\varphi$ between the polyhedron $\lvert K \rvert$ and $A$ such that $\varphi$ is $C^1$ in every open simplex of $K$. In particular, each definable set is the finite union of some $C^1$ submanifolds. A \emph{(smooth) cell} $C \subseteq A$ is the image of an  open simplex of $K$ via such map. Note that cells are $\mathcal{S}$-definable.
\end{proposition}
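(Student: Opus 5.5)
The plan is to assemble the statement from the two standard results of \cite{MR1633348} cited just before it, rather than to reprove o-minimal triangulation from scratch. The \emph{in particular} clause will be obtained separately from $C^1$ cell decomposition. First, I apply the $C^1$ cell decomposition theorem \cite[Ch.~7, (3.2)]{MR1633348} to $A$. This gives a finite partition of $A$ into $\mathcal{S}$-definable cells $D_1,\dots,D_N$, each a $C^1$ embedded submanifold of $\R^n$ that is $C^1$-diffeomorphic to an open box. Taking this as the basic decomposition already yields the statement that $A$ is a finite union of $C^1$ submanifolds. I will not rely on images of open simplices for that claim, since a $C^1$ homeomorphism need not be an immersion.

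Second, I apply the definable triangulation theorem \cite[Ch.~8, (1.7)]{MR1633348} to $A$ together with the finitely many definable subsets $D_1,\dots,D_N$. This produces a finite complex $K$ and an $\mathcal{S}$-definable homeomorphism $\varphi\colon |K|\to A$ such that each $D_i$ is a union of images of open simplices. In particular, each open simplex $\sigma$ is mapped by $\varphi$ into a single $C^1$ cell $D_i$. Definability of the images $\varphi(\sigma)$ is immediate from the axioms of \cref{def:o-minimal_structure}, since they are images of definable sets under a definable map.

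The third step, which I expect to be the main obstacle, is upgrading $\varphi$ to be $C^1$ on every open simplex. Let $\sigma$ be an open simplex. By generic differentiability of definable maps (again \cite[Ch.~7, (3.2)]{MR1633348}), the set $B_\sigma\subseteq\sigma$ of points where $\varphi|_\sigma$ fails to be $C^1$ is definable of dimension $<\dim\sigma$. I then retriangulate $|K|$ compatibly with $\bigcup_\sigma B_\sigma$ and replace $\varphi$ by $\varphi\circ\psi$, where $\psi$ is the new triangulating homeomorphism.

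The difficulty is that $\psi$ may itself introduce new non-$C^1$ points, so the argument must be organised as a descending induction. I would first try to arrange that $\psi$ is chosen $C^1$ off the lower-dimensional skeleton, inducting on the maximal dimension of simplices on which smoothness fails. This works because a bad set of dimension $k$ can be pushed into the $(k)$-skeleton and then handled on lower-dimensional simplices. If this bookkeeping becomes unwieldy, the fallback is to quote the $C^1$ version of the triangulation theorem directly, which is the form in which the cited references state it.
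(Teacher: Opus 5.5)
The paper gives no argument of its own for this statement: it simply quotes the triangulation theorem \cite[Ch.~8, (1.7)]{MR1633348} together with the $C^1$ cell decomposition \cite[Ch.~7, (3.2)]{MR1633348}. Your Steps 1 and 2 are exactly these two results, and they are fine. Deriving the clause about finite unions of $C^1$ submanifolds from the cell decomposition, rather than from images of open simplices, is actually the more careful reading, for the reason you give: a map that is only $C^1$ on an open simplex need not be an immersion. The genuine gap is Step 3. It is the only part of your proposal that goes beyond quoting the references, and as designed it does not close.

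Concretely, let $\psi\colon |L|\to|K|$ be a retriangulation compatible with $B=\bigcup_\sigma B_\sigma$ and with the open simplices of $K$. A top-dimensional open simplex $\tau$ of $L$ is then mapped into $\sigma\setminus B_\sigma$ for some top-dimensional $\sigma$. So $\varphi\circ\psi$ is $C^1$ on $\tau$ provided $\psi|_\tau$ is, and nothing in your construction controls that. The non-$C^1$ locus of $\psi|_\tau$ is again a definable set of dimension up to $\dim\tau-1$, lying in the interior of a top-dimensional simplex. Hence neither the dimension of the bad set nor the maximal dimension of simplices on which smoothness fails has to drop. Asking that $\psi$ be $C^1$ off a lower skeleton is already the top-dimensional case of the proposition for the pair $(|K|,B)$. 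That case is no easier than the general one, since $B$ is an arbitrary definable set of lower dimension, so the descending induction is circular.

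The missing idea is that $C^1$ regularity must be built into the construction of the triangulation rather than repaired afterwards. In the inductive proof of the triangulation theorem, one triangulates the projection to $\R^{n-1}$ compatibly also with a $C^1$ cell decomposition on whose cells the finitely many boundary functions $f_1<\dots<f_k$ are $C^1$. By induction the base homeomorphism $\phi$ is $C^1$ on open simplices, hence so is each $f_i\circ\phi$. The lifted homeomorphism, on every open simplex of the subdivided prisms, is $\phi$ in the base variables and, in the last variable, either $f_i\circ\phi$ or an affine interpolation between $f_i\circ\phi$ and $f_{i+1}\circ\phi$. It is therefore $C^1$ there.

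This is what the paper's joint citation of the two theorems tacitly relies on. Your fallback of quoting a $C^1$ triangulation theorem directly therefore coincides with the paper's treatment, but Step 3 as written is not a proof of it.
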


In particular, a definable set $A \subseteq \R^n$ has a well-defined dimension $\dim(A)$, which coincides with the minimum $k$ such that $A$ is the finite union of $C^1$ submanifolds of dimension at most $k$, equivalently, such that $A$ is $k$-rectifiable.

We also recall the standard dimension inequality for definable maps, see \cite[Ch.~4, (1.6)]{MR1633348}.

\begin{proposition} \label[proposition]{def_image_dimension}
    Let $\mathcal{S}$ be an o-minimal structure and let $f \colon A \to B$ be a $\mathcal{S}$-definable map between $\mathcal{S}$-definable sets $A,B$. Then $f(A)$ is definable and  $\dim(f(A)) \le \dim(A)$.
\end{proposition}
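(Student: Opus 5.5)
The plan is to reduce the statement to two facts about coordinate projections, applied to the graph $\Gamma_f \subseteq A \times B$. First, definability of $f(A)$ is immediate. By assumption $\Gamma_f$ is $\mathcal S$-definable, and $f(A)$ is the image of $\Gamma_f$ under the coordinate projection $\R^{n}\times\R^{m}\to\R^{m}$ onto the $B$-factor. Applying axiom (iv) of \cref{def:o-minimal_structure} once for each of the $n$ discarded coordinates shows that $f(A)$ is definable. For the dimension inequality I would use the two facts below for a definable $S\subseteq\R^{N+1}$ and the coordinate projection $\pi\colon\R^{N+1}\to\R^N$:
\begin{enumerate}[(a)]
\item $\dim \pi(S)\le \dim S$;
\item if $\pi|_S$ is injective, then $\dim\pi(S)=\dim S$.
\end{enumerate}
Iterating (b) along the projection $\Gamma_f\to A$, which is a bijection, gives $\dim\Gamma_f=\dim A$. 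Iterating (a) along the projection $\Gamma_f\to f(A)$ gives $\dim f(A)\le\dim\Gamma_f=\dim A$.

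Both (a) and (b) will come from a cell decomposition adapted to $\pi$. This is the cell decomposition theorem from \cite{MR1633348} underlying \cref{def_structure}, but I would use its cylindrical form rather than a triangulation. Concretely, take a decomposition of $\R^{N+1}$ into cells that partitions $S$. Each cell $C\subseteq S$ is then of one of two types over the cell $\pi(C)\subseteq\R^N$:
\begin{enumerate}[(i)]
\item the graph of a continuous definable function on $\pi(C)$, in which case $\dim C=\dim\pi(C)$;
\item a band between two such functions, in which case $\dim C=\dim\pi(C)+1$.
\end{enumerate}
In both cases $\dim\pi(C)\le\dim C$. Since $\pi(S)$ is the finite union of the sets $\pi(C)$, and dimension of a finite union is the maximum of the dimensions, (a) follows. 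For (b), injectivity of $\pi|_S$ rules out bands, since every vertical fibre of a band is an open interval. Hence every cell of $S$ is a graph, and so $\dim\pi(C)=\dim C$ for each cell, which gives equality after taking maxima.

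The main obstacle is consistency of the notion of dimension. The paper defines $\dim$ via \cref{def_structure}, as the least $k$ such that the set is a finite union of $C^1$ submanifolds of dimension at most $k$. The argument above uses instead the cell-theoretic dimension, namely the maximum dimension of the cells in any decomposition. I would first check that these agree, and that the value does not depend on the chosen decomposition. Both points reduce to the fact that a $C^1$ cell of dimension $d$ cannot be covered by finitely many $C^1$ submanifolds of lower dimension. That fact holds because such submanifolds have zero $d$-dimensional Hausdorff measure, whereas an open $d$-cell has positive $d$-dimensional Hausdorff measure.
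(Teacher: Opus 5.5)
Your argument is correct. The paper gives no proof of its own here; it quotes the result from \cite[Ch.~4, (1.6)]{MR1633348}.

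\textbf{The cited argument.} There, dimension is defined as the largest dimension of a cell contained in the set. The inequality is obtained, via the graph of $f$, from two general facts: invariance of dimension under arbitrary definable bijections, and the fibre-dimension formula. Everything is proved purely definably, so it works over any o-minimal expansion of an ordered field.

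\textbf{What your route does differently.} It is leaner. Because you work with a cylindrical decomposition, the only bijection you need is an injective coordinate projection. In that case invariance is immediate, since bands are excluded. Monotonicity under projection is then read off cell by cell.

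\textbf{The price.} You need an outside, measure-theoretic and hence real-specific, input. It must show that cell dimension is independent of the decomposition and agrees with the paper's definition as the least $k$ such that the set is a finite union of $C^1$ submanifolds of dimension at most $k$. Your Hausdorff-measure argument does this correctly. It requires the decomposition used in (a) and (b) to be a $C^1$ cell decomposition, which \cite{MR1633348} provides, so that each cell is genuinely a $C^1$ submanifold of the expected dimension. In doing this you make explicit a compatibility that the paper only asserts after \cref{def_structure}.

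\textbf{A minor implicit step.} Passing from $\Gamma_f$ to $f(A)$ discards the \emph{first} $n$ coordinates. So before iterating (a) you must permute coordinates. This is harmless, since both definability and dimension are invariant under such permutations.
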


We include a planar statement that is used in the proof of \cref{theo1_intro}.
\begin{lemma} \label{def_graph}
    Let $f \colon \R^2 \to \R$ be a non-zero analytic function and $r \ge 0$. Then
    \[
    Z_r \coloneq q  \Set{x \in \R^2 \colon \lvert x \rvert \le r,\; f(x)=0}
    \]
    is $\Ran$-definable of dimension at most one. Moreover, it admits a $\Ran$-definable geometric realization as finite graph that is absolutely continuous on each $1$-simplex.
\end{lemma}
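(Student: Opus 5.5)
The plan is to show that $Z_r$ is $\Ran$-definable by reducing it to a restricted analytic function, then bound its dimension, and finally obtain the graph realization from the triangulation theorem.

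\emph{Definability.} Choose a real-analytic function $f_0$ on a neighbourhood of $[-1,1]^2$ by rescaling, namely $f_0(y)\coloneq f(2ry)$ when $r>0$; the case $r=0$ is trivial, since $Z_0$ is empty or a single point. Let $\tilde f$ be the associated restricted analytic function. Then
\[
Z_r=\Set{x\in\R^2 : x_1^2+x_2^2\le r^2,\ \tilde f(x/(2r))=0}.
\]
This set is obtained from the graph of $\tilde f$, a semialgebraic condition, and a linear change of variables, by intersection and projection. Hence it is $\Ran$-definable.

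\emph{Dimension.} By \cref{def_structure}, $Z_r$ is a finite union of $C^1$ cells. If $\dim Z_r=2$, some cell would be an open subset of $\R^2$ on which $f$ vanishes. Since $\R^2$ is connected, analytic continuation would then force $f\equiv 0$, contradicting $f\neq 0$. Therefore $\dim Z_r\le 1$.

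\emph{Graph realization.} Apply \cref{def_structure} to the compact definable set $Z_r$. This gives a finite complex $K$ and a definable homeomorphism $\varphi\colon |K|\to Z_r$ that is $C^1$ on open simplices. Because $Z_r$ is closed and bounded, we may take $K$ closed, i.e.\ simplicial; the triangulation of a closed set uses a closed complex. The dimension of the complex equals $\dim Z_r\le1$, so $K$ is a finite $1$-dimensional simplicial complex (possibly with isolated vertices). This is a geometric realization in the sense of \cref{geo_real}.

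\emph{Absolute continuity.} On each closed $1$-simplex, parametrized affinely by $[0,1]$, the two coordinate functions of $\varphi$ are continuous and $\Ran$-definable. By \cref{lem:ac} they are absolutely continuous, which gives the required regularity.

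The main obstacle I expect is the bookkeeping in this last step. One must ensure that the triangulation provided is by a closed (simplicial) complex of dimension exactly $\le1$, and that restricting $\varphi$ to a closed edge yields a continuous definable function on a compact interval, so that \cref{lem:ac} applies.
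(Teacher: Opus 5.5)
Your proposal is correct and follows essentially the same route as the paper: an analytic-continuation argument (no open piece of $Z_r$) gives $\dim Z_r\le 1$, the triangulation of \cref{def_structure} applied to the compact set $Z_r$ yields a finite simplicial complex, and \cref{lem:ac} gives absolute continuity on each edge. The only addition is that you spell out the $\Ran$-definability via rescaling to a restricted analytic function, which the paper simply calls clear.
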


\begin{proof}
    Clearly, the set $Z_r$ is $\Ran$-definable. Moreover, $Z_r$ has empty interior: otherwise $f$ would vanish on a non-empty open subset of $\R^2$, hence on all of $\R^2$ by analyticity, contradicting the assumption that $f \not\equiv 0$. Therefore $\dim(Z_r)\le 1$.

   Consequently, \cref{def_structure} ensures that $Z_r$ is $\Ran$-definably homeomorphic to (the polyhedron of) a complex $K$ of dimension at most one. Since $Z_r$ is compact, then $K$ is closed and therefore it is a finite simplicial complex. In particular, by Lemma \ref{lem:ac}, the homeomorphism is absolutely continuous on every simplex. Thus, $Z_r$ admits a $\Ran$-definable geometric realization $\varphi \colon |K| \to Z_r$ as finite graph, see \cref{geo_real}.
\end{proof}

The first technical tame-geometric tool that we use in the present work is the following Hardt's definable triviality theorem, see \cite[Ch.~9, (1.2)]{MR1633348}.
\begin{theorem} \label[theorem]{def_triviality}
    Let $\mathcal{S}$ be an o-minimal structure, $f \colon A \to B$ be a continuous $\mathcal{S}$-definable map between $\mathcal{S}$-definable sets $A,B$, and $C_1,\dots,C_N \subseteq A$ be $\mathcal{S}$-definable subsets. Then there exists a finite partition of $B$ into $\mathcal{S}$-definable sets $\{B_i\}_{i\in I}$, $\mathcal{S}$-definable sets $\{F_i\}_{i\in I}$, $\mathcal{S}$-definable subsets $F_{i,j}\subseteq F_i$, and $\mathcal{S}$-definable homeomorphisms
    \[
    \varphi_i \colon B_i \times F_i \to f^{-1}(B_i)
    \]
such that the following diagram commutes
\begin{equation*}
    \begin{tikzcd}
B_i\times F_i \arrow[rr, "\varphi_i"] \arrow[rd, "p_1"'] &     & f^{-1}(B_i) \arrow[ld, "f"] \\
                                                      & B_i &                            
\end{tikzcd}
\end{equation*}
and such that
    \[
    \varphi_i(B_i\times F_{i,j})=C_j\cap f^{-1}(B_i)
    \qquad\text{for every } i\in I,\ j\in\{1,\dots,N\}.
    \]
\end{theorem}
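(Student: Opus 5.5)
Since $A\cong\Gamma_f\subseteq A\times B$ via $x\mapsto(x,f(x))$, and $A$ is definable in some $\R^n$, I would first reduce to the case $A\subseteq B\times\R^n$ with $f$ the restriction of the coordinate projection $p\colon B\times\R^n\to B$. The sets $C_j$ are replaced by their images in $B\times\R^n$. The statement is then a relative (parametrized) version of the triangulation theorem, \cref{def_structure}. The plan is to prove it by induction on $n$, allowing finitely many definable subsets $C_j$ at every stage. Keeping the $C_j$ in the statement is exactly what makes the induction go through.

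\textbf{Base case $n=1$.} Take a cell decomposition of $B\times\R$ compatible with $A,C_1,\dots,C_N$, and refine $B$ so that over each cell $B_i$ of the induced decomposition of $B$ the following holds: the fibre structure is described by finitely many continuous definable functions $\xi_1<\dots<\xi_k$ on $B_i$, and each of $A$ and the $C_j$ is a union of graphs of the $\xi_l$ and of bands between consecutive ones. Let $F_i\subseteq\R$ be the corresponding union of the points $1,\dots,k$ and the open intervals between them (and unbounded rays where needed). Map $(b,s)$ piecewise affinely, sending $l\mapsto\xi_l(b)$ and interpolating linearly between consecutive points; on the unbounded pieces use a fixed semialgebraic homeomorphism such as $s\mapsto\xi_k(b)+(s-k)$. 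This map is a definable homeomorphism $B_i\times F_i\to p^{-1}(B_i)\cap A$ over $B_i$, and it carries $B_i\times F_{i,j}$ onto $C_j\cap f^{-1}(B_i)$.

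\textbf{Inductive step.} Write $B\times\R^n=(B\times\R^{n-1})\times\R$ and let $q$ be the projection onto the last factor. I would apply a strengthened $n=1$ step over the base $B\times\R^{n-1}$. Instead of a bare cell decomposition, I would use the parametrized triangulation: a definable triangulation of the base in which the fibre data $\xi_l$ vary continuously on each \emph{closed} simplex. Then I apply the induction hypothesis to $B\times\R^{n-1}\to B$ with respect to the finitely many images of the simplices. Over each resulting $B_i$, the fibre of $B\times\R^{n-1}$ is identified with a fixed $F'_i$, carrying a fixed simplicial pattern. Combining this with the vertical interpolation in the last coordinate gives the required trivialization of $A$ and the $C_j$ over $B_i$.

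\textbf{Main obstacle.} The delicate point is continuity of the glued homeomorphism at the boundaries of cells in the fibre directions. On a cell the functions $\xi_l$ are continuous, but two of them may collide, or go to $\pm\infty$, on the frontier of that cell inside a single fibre. A naive cell-by-cell interpolation therefore need not extend to a homeomorphism of the whole fibre. The fix I would attempt first is the one in van den Dries: replace cell decompositions by definable triangulations compatible with all the data, in which the functions $\xi_l$ extend continuously to closed simplices and may coincide only along faces. With that in place, affine interpolation on simplices glues to a global homeomorphism, and the induction closes.
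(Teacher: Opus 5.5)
The paper does not prove this statement; it quotes Hardt's theorem from van den Dries's book, so your sketch has to stand on its own. The architecture is sound: pass to the graph of $f$, induct on the fibre dimension while carrying the $C_j$ along, triangulate the intermediate base, apply the induction hypothesis with the simplices as marked sets, and interpolate vertically. Your case $n=1$ is fine.

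The gap is the step on which everything rests. You assume that $B\times\R^{n-1}$ admits a triangulation $\Phi$, compatible with the cells, on whose \emph{closed} simplices the section functions $\xi_l$ of your cylindrical decomposition extend continuously. For the $\xi_l$ you actually have, this is false, and no choice of $\Phi$ repairs it.
\begin{itemize}
\item You never bound $A$, so some $\xi_l$ may tend to $\pm\infty$ at finite frontier points (think of $1/x$).
\item Worse, a bounded $\xi_l$ can have a whole interval of limits at a frontier point. A triangulation is a homeomorphism on closed simplices, so it cannot blow that point up.
\end{itemize}
For a concrete example, take $B=\R$ and $n=3$ with fibre coordinates $(x,y,z)$. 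Let $A$ be the closure of $\{(b,x,y,z): y>0,\ z=x(x^2+y^2)^{-1/2}\}$; this is a product family. Over the cell $\{y>0\}$ of $(b,x,y)$-space, $A$ is the graph of $\xi=x(x^2+y^2)^{-1/2}$. Over the $b$-axis its fibre is the segment $\lvert z\rvert\le 1$, and at each point $p$ of the axis every value in $[-1,1]$ is a limit of $\xi$, depending on the direction of approach. Each ray from $p$ into $\{y>0\}$ eventually lies in one of the finitely many open simplices there whose closures contain $\Phi^{-1}(p)$. Hence two rays giving different limits share a simplex, and on that simplex $\xi\circ\Phi$ has no continuous extension at $\Phi^{-1}(p)$. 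This happens over every $b$, so it cannot be pushed into a lower-dimensional piece of the partition of $B$. Without the extension property, your interpolated map has no reason to be continuous along the axis.

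What is missing is the good-direction normalization that the triangulation theorem itself relies on, here needed uniformly in $b$. One way to supply it:
\begin{enumerate}
\item Bound the fibres by a semialgebraic homeomorphism $\R\to(-1,1)$ in each coordinate.
\item Let $Y$ be the union of the boundaries of $A$ and of the $C_j$ relative to $B\times\R^n$. It is closed there and $\dim Y<\dim B+n$. So, off a subset of $B$ of smaller dimension (handled by induction on $\dim B$), its fibres $Y_b$ have empty interior.
\item For such $b$, the bad directions for $Y_b$ (those $u$ for which some line parallel to $u$ meets $Y_b$ in infinitely many points) form a definable subset of $\mathbb{S}^{n-1}$ of dimension at most $n-2$. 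A dimension count then gives finitely many directions $u_1,\dots,u_K$, with $K>\dim B$, such that every $b$ has a good one among them.
\item On each piece of the resulting finite partition of $B$, change the fibre coordinates linearly so that the last axis is that good direction.
\end{enumerate}
After this, $Y$ meets every vertical line in finitely many points, and you can choose the decomposition so that the graphs of the $\xi_l$ lie in $Y$. Their cluster sets at frontier points are then finite. Since simplices are convex, these cluster sets are also connected, hence singletons along each closed simplex of a triangulation compatible with the cells. The extended functions then agree on faces with section functions of the lower cells, which is exactly what your gluing needs.

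Finally, interpolate against a reference fibre $A_{b_0}$, $b_0\in B_i$, transported by $h_i(b_0,\cdot)$. Do not use the fixed pattern $1,\dots,k$ of your base case, because the number of $\xi_l$ drops across faces.
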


In our proof of \cref{theo1_intro}, we use \cref{def_triviality} to obtain uniform families of one-dimensional definable sets. In this regard, we also need precise tame-geometric properties for the multiplicative integral in the metabelian setting. We will summarize these properties in \cref{def_integral} and we discuss all the tame-geometric inputs needed for its proof in the following subsections.

\subsection{Tameness properties of the endpoint map} \label{sec:tamenessendpoint}

Our proof of \cref{def_integral} relies on the stability under integration result obtained in \cite{MR2769219}.
Following \cite[Definition~1.1 and Definition~1.2]{MR2769219}, if $S$ is a $\Ran$-definable set and $\ell\ge 0$, then for every Lebesgue measurable function $f\colon S\times\R^\ell \to\R$ one defines
\[
I_S(f):S\to \R,\quad I_S(f)(x)\coloneq 
\begin{cases}
\displaystyle\int_{\R^m} f(x,y)\,\de y, & \text{if } f(z,\cdot)\text{ is integrable for every $z\in S$},\\[0.8em]
0, & \text{otherwise.}
\end{cases}
\]
Moreover, one denotes by $\mathscr{C}(S)$ the $\R$-algebra generated by the $\Ran$-definable functions on $S$ and by the functions of the form $x\mapsto \log g(x)$, where $g\colon S\to(0,+\infty)$ is $\Ran$-definable. The elements of $\mathscr{C}(S)$ are called \emph{constructible functions}. Clearly, constructible functions are $\Ranexp$-definable.

\begin{theorem}[Cluckers--Miller, {\cite[Theorem~1.3]{MR2769219}}]\label[theorem]{CM_integration}
   Let $S$ be a $\Ran$-definable set and let $f\in \mathscr{C}(S\times\R^\ell)$. Then $
    I_S(f)\in \mathscr{C}(S).$
\end{theorem}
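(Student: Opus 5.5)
The statement is quoted from \cite{MR2769219}, and the paper uses it as a black box. The argument I would follow to prove it first reduces to the case $\ell=1$. After that, the core step is a one-variable integration of prepared constructible functions.

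\emph{Reduction to $\ell=1$.} By Fubini--Tonelli,
\[
\int_{\R^\ell} f(x,y)\,\de y=\int_{\R}\Big(\int_{\R^{\ell-1}} f(x,y',y_\ell)\,\de y'\Big)\de y_\ell .
\]
So it suffices to prove two things for a constructible $f$ on $S\times\R$: that $I_S(f)\in\mathscr C(S)$, and that the integrability locus $\mathrm{Int}(f)=\{x\in S: f(x,\cdot)\in L^1(\R)\}$ is $\Ran$-definable. Granting these two facts, we induct on $\ell$. The inner integral is constructible on $(S\times\R)\cap\mathrm{Int}$, which is a $\Ran$-definable set. The outer integral is handled by the case $\ell=1$. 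Integrability of $f(x,\cdot)$ on $\R^\ell$ is detected by applying the same statement to $|f|$. For this we need that $|f|$ is dominated by, and comparable to, a constructible function, which is built into the preparation below.

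\emph{Preparation in the last variable.} I would invoke the Lion--Rolin subanalytic preparation theorem, together with $\Ran$-cell decomposition, for the finitely many $\Ran$-definable functions $g_i$ and $\log h_j$ from which $f$ is built. This yields a finite partition of $S\times\R$ into cells $A$ over bases $B\subseteq S$, with a definable centre $\theta\colon B\to\R$. On each cell every generator is prepared: we get $g_i(x,y)=a_i(x)\,|y-\theta(x)|^{r_i}\,U_i(x,y)$ with $r_i\in\mathbb Q$, $a_i$ definable, and $U_i$ a unit. The unit has the form $U_i=F_i(\psi(x),\,c(x)|y-\theta|^{1/p},\,d(x)|y-\theta|^{-1/p})$, where $F_i$ is analytic and positive on a compact box and the arguments stay bounded. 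Taking logarithms shows that on $A$ the function $f$ is a finite sum of terms
\[
b(x)\,|y-\theta(x)|^{r}\,\big(\log|y-\theta(x)|\big)^{s}\,V(x,y),\qquad V \text{ a bounded analytic unit-type function as above}.
\]
The fibres of $A$ over $x$ are intervals with definable endpoints $\alpha(x)<\beta(x)$.

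\emph{Integrating the prepared terms; main obstacle.} I expand $F$ in its convergent power series. Termwise integration of $|y-\theta|^{r+k/p}\log^s|y-\theta|$ between $\alpha$ and $\beta$ produces powers and logarithms of $|\beta-\theta|$ and $|\alpha-\theta|$, plus explicit coefficients; for the single exponent $r+k/p=-1$ one gets an extra $\log$ power. The main difficulty is showing that the resulting infinite sums reassemble into finitely many products of $\Ran$-definable functions and logarithms. I would first try to regroup the series as analytic functions of the bounded quantities $c(\beta-\theta)^{1/p}$, $d(\beta-\theta)^{-1/p}$, and so on. These are restricted analytic functions evaluated at definable bounded arguments, hence $\Ran$-definable. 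The isolated resonant exponent $-1$ is split off and contributes only a finite number of $\log$ terms. A second delicate point is integrability near unbounded endpoints or near $\theta$. On each cell, integrability of the prepared sum should reduce to definable conditions on $x$: vanishing of the coefficients $b(x)$ of non-integrable leading exponents, read off after refining the cell decomposition so that cancellations between terms are resolved. This is exactly where I expect the genuine work of Cluckers--Miller to lie.
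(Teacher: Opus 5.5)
The paper does not prove this statement. It quotes it from Cluckers--Miller and uses it only through \cref{def_primitive}, so your outline has to stand on its own. Its overall shape (preparation in the last variable, one-variable integration of prepared terms, then Fubini) is reasonable, but the reduction to $\ell=1$ rests on a false claim.

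The integrability locus $\mathrm{Int}(f)=\{x\in S: f(x,\cdot)\in L^1(\R)\}$ need not be $\Ran$-definable. Take $S=(0,\infty)\times\R$ and $f(x_1,x_2,y)=(x_2-\log x_1)\,\mathbf{1}_{\{y>1\}}$, which lies in $\mathscr C(S\times\R)$. Then $\mathrm{Int}(f)$ is the graph of $\log$. This set is not globally subanalytic, since otherwise $\exp$ would be $\Ran$-definable, contradicting polynomial boundedness. In general one only gets that $\mathrm{Int}(f)$ is the zero set of a constructible function, hence $\Ranexp$-definable.

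So your inductive hypothesis is false, and the inner integral cannot be treated as an element of $\mathscr C$ on $(S\times\R)\cap\mathrm{Int}$. The induction must carry a different statement, for instance: there is $g\in\mathscr C(S)$ with $g(x)=\int f(x,y)\,\de y$ for every $x\in\mathrm{Int}(f)$. This implies the paper's version, in which $I_S(f)\equiv 0$ unless $\mathrm{Int}(f)=S$. In the Fubini step you then replace the inner integral by such a $g\in\mathscr C(S\times\R)$, defined everywhere. When $f(x,\cdot)\in L^1(\R^\ell)$, the exceptional values of $y_\ell$ form a null set, so the outer integral is unchanged.

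The same example refutes your mechanism for detecting integrability on a cell. The coefficients $b(x)$ of your prepared terms are constructible, not subanalytic: taking the logarithm of $a(x)|y-\theta|^rU$ produces the factor $\log a(x)$. Hence the loci where non-integrable leading contributions cancel are zero sets of constructible functions. No refinement of an $\Ran$-cell decomposition can isolate them, so controlling these cancellations is genuinely part of what must be proved.

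The same holds for showing that the termwise-integrated series reassemble into finitely many products of $\Ran$-definable functions and logarithms. You explicitly defer both steps to ``the genuine work of Cluckers--Miller''. As written, the proposal is a roadmap with its two essential steps missing, not a proof.

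A smaller, repairable inaccuracy: the Lion--Rolin units depend on both $c(x)|y-\theta|^{1/p}$ and $d(x)|y-\theta|^{-1/p}$. The resonant exponent $-1$ is therefore hit along an infinite diagonal of index pairs, not at a single exponent. That diagonal sums to a power of $\log|y-\theta|$ times a coefficient that is again subanalytic in $x$: an analytic function of $\psi(x)$ and the bounded quantity $c(x)d(x)$, times a power of $c(x)$ or $d(x)$.
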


We only use \cref{CM_integration} in the following simple form.
\begin{corollary}\label[corollary]{def_primitive}
    Let $S$ be a $\Ran$-definable set and let $h \colon S \times [0,1] \to \R^m$ be a $\Ran$-definable map. Assume that $h(x,\cdot)$ is integrable for all $x\in S$. Then
    \[
    H \colon S \times [0,1] \to \R^m, \qquad
    (x,t) \mapsto \int_0^t h(x,\tau)\,\de \tau,
    \]
    is $\Ranexp$-definable.
\end{corollary}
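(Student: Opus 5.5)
We deduce the corollary directly from \cref{CM_integration}, working one component at a time. Write $h=(h_1,\dots,h_m)$. It suffices to prove that each component $H_k(x,t)=\int_0^t h_k(x,\tau)\,\de\tau$ is $\Ranexp$-definable. Indeed, the graph of $H$ is the intersection of the preimages of the graphs of the $H_k$ under coordinate projections, and this intersection is definable by the axioms in \cref{def:o-minimal_structure}.

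To match the format of \cref{CM_integration}, take the $\Ran$-definable set $S'\coloneq S\times[0,1]$ and $\ell=1$. Define $f\colon S'\times\R\to\R$ by
\[
f(x,t,\tau)\coloneq \begin{cases} h_k(x,\tau), & 0\le \tau\le t,\\ 0, & \text{otherwise.}\end{cases}
\]
This $f$ is $\Ran$-definable. Its graph is obtained from the graph of $h_k$ and the semialgebraic condition $0\le\tau\le t$ through finite Boolean operations, products and projections. In particular $f$ is Lebesgue measurable, and $f\in\mathscr{C}(S'\times\R)$, because $\Ran$-definable functions are among the generators of the algebra of constructible functions.

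For every $(x,t)\in S'$, the function $f(x,t,\cdot)$ is the restriction of the integrable function $h_k(x,\cdot)$ to $[0,t]$, extended by zero. It is therefore integrable on $\R$, so the first case of the definition of $I_{S'}$ always applies. Hence $I_{S'}(f)(x,t)=\int_0^t h_k(x,\tau)\,\de\tau=H_k(x,t)$.

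By \cref{CM_integration}, $H_k\in\mathscr{C}(S')$. Constructible functions are $\Ranexp$-definable, as noted above: they are finite sums and products of $\Ran$-definable functions and functions of the form $\log g$ with $g>0$ $\Ran$-definable, and $\log$ is $\Ranexp$-definable because its graph is the graph of $\exp$ with the coordinates swapped. Therefore $H_k$ is $\Ranexp$-definable, and so is $H$.

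The only delicate point is that integrability must hold for every parameter $(x,t)\in S'$. If it failed at some point, $I_{S'}(f)$ would be set to $0$ there instead of equalling $H_k$. The hypothesis that $h(x,\cdot)$ is integrable for every $x\in S$ is exactly what rules this out. Nothing else in the argument is expected to cause difficulty.
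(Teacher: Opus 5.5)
Your proposal is correct and matches the paper's proof: reduce to one component, extend $h$ by zero outside $\{0\le\tau\le t\}$ to get an $\Ran$-definable (hence constructible) function on $S\times[0,1]\times\R$, use the integrability hypothesis so that $I_{S\times[0,1]}(f)$ equals $\int_0^t h(x,\tau)\,\de\tau$, and then apply the Cluckers--Miller theorem. You also spell out two points the paper leaves implicit: the component-wise reduction, and why constructible functions are $\Ranexp$-definable (via $\log$).
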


\begin{proof}
    We argue component-wise, so we may assume $m=1$. Consider the $\Ran$-definable set
    \[
    E \coloneq  \Set{(x,t,\tau)\in S \times[0,1]\times[0,1] : 0\le \tau \le t},
    \]
    and the function 
    \[
    f\colon S\times[0,1]\times\R\to\R, \quad (x,t,\tau) \mapsto 
    \begin{cases}
    h(x,\tau), & \text{if } (x,t,\tau)\in E,\\
    0, & \text{otherwise.}
    \end{cases}
    \]
    Then $f$ is $\Ran$-definable, and therefore $f\in \mathscr{C}(S\times[0,1]\times\R)$. Moreover, by assumption for every $(x,t)\in S\times[0,1]$, the function $\tau\mapsto f(x,t,\tau)$ is integrable and
    \[
    I_{S\times[0,1]}(f)(x,t)=\int_0^t h(x,\tau)\,\de \tau.
    \]
    By \cref{CM_integration}, the latter function is in $\mathscr{C}(S\times[0,1])$. Since every constructible function is $\Ranexp$-definable, the claim follows.
\end{proof}

\begin{proposition} \label[proposition]{def_integral}
    Let $G$ be a simply connected, metabelian Lie group and consider mixed coordinates $\Phi:\g'\times B\to G$ as in \eqref{eq:mixed_coordiantes}. Let $\Lambda 
    $ be an $\Ran$-definable set and consider an $\Ran$-definable map
    \[
        \sigma \colon \Lambda \times [0,1] \to B,
    \]
    such that $\sigma(\lambda,\cdot) \colon [0,1] \to B$ is an absolutely continuous curve for every $\lambda \in \Lambda$, and such that $\sigma(\Lambda\times[0,1])$ is bounded. For every $\lambda\in \Lambda$, let $\gamma^\lambda$ be the multiplicative integral of $\sigma(\lambda,\cdot)$, see \cref{def:multiplicative_integral}, and define
    \[
        \gamma \colon \Lambda\times[0,1]\to G,
        \qquad
        \gamma(\lambda,t)\coloneq q \gamma^\lambda(t).
    \]
    Then the map
    \[
        \Phi^{-1}\circ\gamma \colon \Lambda \times [0,1] \to \g' \times B,
        \qquad
        (\lambda,t)\mapsto \Phi^{-1}(\gamma^\lambda(t)),
    \]
    is $\Ranexp$-definable. Equivalently, the family $\{\gamma^\lambda\}_{\lambda\in \Lambda}$ is $\Ranexp$-definable when $G$ is expressed in mixed coordinates.
\end{proposition}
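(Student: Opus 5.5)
The proof rests on the explicit formula \eqref{eq:curve_coordinates}. Since $\sigma(\lambda,\cdot)$ is absolutely continuous with values in $B$, its multiplicative integral $\gamma^\lambda$ has control $u=\partial_t\sigma(\lambda,\cdot)\in L^1([0,1],B)$. Hence in mixed coordinates $\Phi^{-1}(\gamma^\lambda(t))=(a(\lambda,t),b(\lambda,t))$ with
\[
b(\lambda,t)=\sigma(\lambda,t)-\sigma(\lambda,0),\qquad a(\lambda,t)=\int_0^t \rho(\ad_{b(\lambda,\tau)})\,\partial_\tau\sigma(\lambda,\tau)\,\de\tau .
\]
The component $b$ is $\Ran$-definable directly, so the whole task is to show that $a$ is $\Ranexp$-definable. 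For this I will apply \cref{def_primitive} with $S=\Lambda$ and integrand $h(\lambda,\tau)\coloneq \rho(\ad_{b(\lambda,\tau)})\partial_\tau\sigma(\lambda,\tau)$. The work is to check that $h$ is $\Ran$-definable and integrable in $\tau$.

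\textbf{Step 1 (the derivative).} Set
\[
D\coloneq\Set{(\lambda,\tau): \partial_\tau\sigma(\lambda,\tau)\text{ exists}},\qquad s(\lambda,\tau)\coloneq\partial_\tau\sigma(\lambda,\tau)\ \text{on } D,\quad s\coloneq 0 \text{ off } D.
\]
Both $D$ and $s$ are first-order definable from $\sigma$ via an $\varepsilon$–$\delta$ formula, so they are $\Ran$-definable. For each $\lambda$, the function $\sigma(\lambda,\cdot)$ is definable in one variable, hence piecewise $C^1$ with finitely many breakpoints. Therefore $s(\lambda,\cdot)$ agrees a.e.\ with the true derivative. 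Since $\sigma(\lambda,\cdot)$ is absolutely continuous, $s(\lambda,\cdot)$ is integrable, with $\int_0^t s=\sigma(\lambda,t)-\sigma(\lambda,0)$.

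\textbf{Step 2 (the factor $\rho(\ad_b)$).} The map $b\mapsto\rho(\ad_b)\in\End(\g')$ (restricted to $B$, since $\rho(\ad_b)X\in\g'$ for $X\in B$) is entire, as $\rho$ is entire and $\ad_b$ is linear in $b$. It is not globally $\Ran$-definable in general, but boundedness of $\sigma(\Lambda\times[0,1])$ rescues us. Indeed $b(\Lambda\times[0,1])$ lies in a compact ball $K\subset B$. After an affine rescaling of $K$ into $[-1,1]^{\dim B}$, the entries of $\rho(\ad_b)$ on $K$ coincide with restricted analytic functions. So $b\mapsto\rho(\ad_b)$ restricted to $K$ is $\Ran$-definable.

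\textbf{Step 3 (conclusion).} The integrand $h(\lambda,\tau)=\rho(\ad_{b(\lambda,\tau)})\,s(\lambda,\tau)$ is then $\Ran$-definable, being a composition and product of definable maps. It is integrable in $\tau$, because $\rho(\ad_b)$ is bounded on $K$ and $s(\lambda,\cdot)\in L^1$. \cref{def_primitive} gives that $(\lambda,t)\mapsto a(\lambda,t)$ is $\Ranexp$-definable. Combining this with the $\Ran$-definability of $b$, the graph of $\Phi^{-1}\circ\gamma$ is $\Ranexp$-definable, as claimed.

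The main obstacle is Step 1: making sure that the derivative, defined through a definable formula, is an admissible integrand whose integral genuinely reproduces the absolutely continuous curve. This rests on the fact that definable one-variable functions are differentiable off a finite set. Step 2 is the point where the boundedness hypothesis is essential, since without it $\rho(\ad_b)$ would fail to be globally subanalytic.
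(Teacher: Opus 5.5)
Your proposal is correct and follows essentially the same route as the paper's proof. You take the control to be the $\Ran$-definable partial derivative, set to zero off the finitely many non-differentiability points in each fiber, and use the boundedness of $\sigma$ to make $b\mapsto\rho(\ad_b)$ restricted analytic, hence $\Ran$-definable, on a compact set containing the image of $b$; you then apply \cref{def_primitive} to the $\Ran$-definable integrand $\rho(\ad_{b})u$, exactly as the paper does. Your explicit check that this integrand is integrable in $\tau$ (a bounded factor times an $L^1$ function) verifies a hypothesis of \cref{def_primitive} that the paper leaves implicit.
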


\begin{proof}
Consider the partial differentiability set
\begin{equation*}
    D \coloneq \Set{(\lambda,t)\in \Lambda\times [0,1] : \sigma(\lambda,\cdot)\text{ is differentiable at $t$}}\subseteq \Lambda\times [0,1].
\end{equation*}
The set $D$ is $\Ran$-definable and the partial derivative $\partial_t\sigma :D\to B$ is $\Ran$-definable (see e.g.\ \cite[Lemma 6.8]{Coste}). Note that if $f:I\to \R$ is a definable continuous function on an open interval $I\subset \R$, then it is differentiable out of a finite set of points, \cite[Cor.\ 6.5]{Coste}. Hence, for all $\lambda\in  \Lambda$ the fiber of $D\subset \Lambda\times[0,1]$ over $\Lambda$ is the interval $[0,1]$ minus a finite set of points (that may depend on $\lambda$). Thus, define
\begin{equation}
    u : \Lambda \times [0,1] \to B, \qquad u(\lambda,t)\coloneq 
        \begin{cases}
\partial_t\sigma(\lambda,t), & \text{if }(\lambda,t)\in D,\\
0, & \text{otherwise}.
    \end{cases}
\end{equation}
As a piece-wise defined $\Ran$-definable function on $\Ran$-definable pieces, $u$ is $\Ran$-definable. Hence \(u(\lambda,\cdot)\) coincides almost everywhere with the derivative of
the absolutely continuous curve \(\sigma(\lambda,\cdot)\).
\[
\sigma(\lambda,t)-\sigma(\lambda,0)
=
\int_0^t u(\lambda,\tau)\,\de \tau,
\qquad
(\lambda,t)\in \Lambda \times[0,1].
\]
    Let $(a(\lambda,t),b(\lambda,t))\coloneq (\Phi^{-1}\circ\gamma)(\lambda,t)$ denote the mixed coordinates of $\gamma^\lambda(t)$. We need to prove that such function is $\Ranexp$-definable on $\Lambda\times [0,1]$. By \eqref{eq:curve_coordinates}, we have
    \[
    \begin{cases}
    a(\lambda,t)=\displaystyle\int_0^t \rho(\ad_{b(\lambda,\tau)})\,u(\lambda,\tau)\,\de \tau,\\[0.8em]
    b(\lambda,t)=\displaystyle\int_0^t u(\lambda,\tau)\,\de \tau
    =\sigma(\lambda,t)-\sigma(\lambda,0),
    \end{cases}
    \]
    where $\rho$ is the entire function defined as in \eqref{eq:power_series}. In particular, $b$ is $\Ran$-definable on $\Lambda\times[0,1]$. Since $\sigma(\Lambda\times[0,1])$ is bounded, also $b(\Lambda\times[0,1])$ is bounded. Set
    \[
    K\coloneq \overline{b( \Lambda\times[0,1])}\subseteq B.
    \]
    Then $K$ is a closed and bounded $\Ran$-definable set. The map
    \[
    B\to \End(\g),\qquad X\mapsto \rho(\ad_X),
    \]
    is real-analytic, being the composition of the linear map $X \mapsto \ad_X$ with the entire function $\rho$.
    
    Since $K$ is compact, there exists an open box $Q\subseteq B$ such that
    \[
    K\subseteq Q.
    \]
Each component of the restriction of $X\mapsto \rho(\ad_X)$ to $Q$ is a restricted analytic function. Hence the restriction of $X\mapsto \rho(\ad_X)$ to $K$ is $\Ran$-definable. In particular, the map
    \[
    (\lambda,\tau)\mapsto \rho(\ad_{b(\lambda,\tau)})
    \]
    is $\Ran$-definable on $\Lambda\times[0,1]$. It follows that the integrand
    \[
    (\lambda,\tau)\mapsto \rho(\ad_{b(\lambda,\tau)})\,u(\lambda,\tau)
    \]
    is $\Ran$-definable on $\Lambda\times[0,1]$. By \cref{def_primitive}, the first component $a$ is therefore $\Ranexp$-definable. Since $b$ is already $\Ran$-definable, the map
    \[
    (\lambda,\tau)\mapsto (a(\lambda,\tau),b(\lambda,\tau))
    \]
    is $\Ranexp$-definable, concluding the proof.
\end{proof}

\section{Main results} \label{main}

We are ready to prove our main results \cref{theo1_intro} and \cref{thm:rank_two_intro}. We point out that it is not restrictive to consider simply connected polarized groups $(G,V)$. Indeed, in view of \cref{rem:quotient}, a curve $\gamma$ is abnormal for $(G,V)$ if and only if its lift in the universal cover $\pi \colon \widehat{G} \to G$ is abnormal. Therefore, we get that $\abn(G,V)=\pi (\abn(\widehat{G},V))$. Being $\pi$ a local diffeomorphism, it preserves $k$-rectifiable sets, for each $k \in \Z$.

\subsection{The one-codimensional case} \label{sec:one_codim} We start by discussing the easier case of metabelian Lie groups such that $\dim(G/G')=1$. In what follows, if $\g$ is a Lie algebra and $X_1,\dots,X_k \in \g$, we use the shorthand notation $[X_1,\dots,X_k]$ for $\ad_{X_1} \circ \dots \circ \ad_{X_{k-1}}(X_k)$. For a subset $E \subseteq \g$, we denote with $I(E)$ the smallest ideal of $\g$ containing $E$.

\begin{proposition} \label[proposition]{prop:ideal_codim1}
    Let $\g$ be a Lie algebra such that $\dim(\g/\g')=1$, and $V \subseteq \g$ be a vector subspace such that $\Lie(V)=\g$. Then $I(V \cap \g') = \g'$. 
\end{proposition}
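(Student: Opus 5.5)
Since $\g'$ is an ideal and $V\cap\g'\subseteq\g'$, we have $I(V\cap\g')\subseteq\g'$. The work is the reverse inclusion, and the plan is to reduce it to a statement about a quotient algebra. Set $J\coloneq I(V\cap\g')$, which is an ideal of $\g$ contained in $\g'$, and consider the quotient $\pi\colon\g\to\g/J$. It suffices to show that $\pi(\g')=(\g/J)'$ vanishes, i.e.\ that $\g/J$ is abelian.

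The first step is to describe $\pi(V)$. Since $V\not\subseteq\g'$ (otherwise $\Lie(V)\subseteq\g'\neq\g$, using $\dim(\g/\g')=1$) and $\dim(\g/\g')=1$, we get $\dim(V/(V\cap\g'))=1$. Choose $X\in V\setminus\g'$; then $V=\R X+(V\cap\g')$. Because $V\cap\g'\subseteq J$, the image $\pi(V)=\R\,\pi(X)$ is one-dimensional, or possibly zero.

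The second step uses bracket generation. Surjective Lie algebra morphisms preserve bracket generation, as already noted in the proof of \cref{rem:quotient}. Hence $\g/J=\pi(\Lie(V))=\Lie(\pi(V))=\Lie(\R\,\pi(X))$. The Lie algebra generated by a single vector is $\R\,\pi(X)$ itself, since $[\pi(X),\pi(X)]=0$. So $\g/J$ is at most one-dimensional and in particular abelian. Therefore $\g'\subseteq J$, and combined with $J\subseteq\g'$ this gives $J=\g'$.

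I do not expect a genuine obstacle here. The only points needing care are the observation that $V\not\subseteq\g'$, which is where the hypotheses $\Lie(V)=\g$ and $\dim(\g/\g')=1$ enter, and the fact that the image of a generating set generates the quotient algebra. No metabelian assumption is needed.
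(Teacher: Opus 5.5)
Your argument is correct, but it takes a different route from the paper's. The paper works with an explicit spanning set. It extends a basis $Y_1,\dots,Y_m$ of $V\cap\g'$ by some $Y_{m+1}\in V$, and notes that $\g$ is spanned by the iterated brackets $[Y_{i_1},\dots,Y_{i_k}]$, of which only $Y_{m+1}$ itself lies outside $\g'$. By codimension one, $\g'$ is spanned by the remaining brackets. Antisymmetry of the innermost bracket then lets one assume $i_k\neq m+1$, so each such bracket is an iterated bracket of an element of $V\cap\g'$ and lies in $I(V\cap\g')$.

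You instead pass to $\g/J$ and observe that it is generated by the single vector $\pi(X)$, hence abelian, so $\pi(\g')=(\g/J)'=0$. (Incidentally, your hedge ``or possibly zero'' is unnecessary: $\pi(X)\neq 0$ because $X\notin\g'\supseteq J$.) The underlying fact is the same in both proofs, since $[Y_{m+1},Y_{m+1}]=0$ is exactly what the antisymmetry swap exploits. Your version avoids the bookkeeping with the spanning set and the somewhat terse codimension step, and it makes clear where $\dim(\g/\g')=1$ enters.

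Your argument also generalizes at once. For any bracket-generating $V$, the quotient $\g/I(V\cap\g')$ is generated by $\pi(V)$, whose dimension is $\dim(V/(V\cap\g'))=\dim(\g/\g')$. This is precisely the structure the paper uses in Case~1 of the proof of \cref{thm:rank_two_intro}, where the quotient $(G/N,V/I)$ carries a rank-$2$ polarization. In exchange, the paper's proof gives an explicit spanning set of $\g'$ made of iterated brackets ending in elements of $V\cap\g'$.
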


\begin{proof}
    Since $\g'$ is an ideal of $\g$, we get $I(V \cap \g') \subseteq  \g'$. For the other inclusion, we point out that, since $V$ is bracket-generating, then $V + \g' = \g$. In particular, $V \cap \g'$ has codimension $1$ in $V$. Let $\{Y_1,\dots,Y_m\}$ be a basis of $V \cap \g'$ and extend it to a basis $\{Y_1,\dots,Y_m, Y_{m+1}\}$ of $V$. Since $V$ is bracket-generating, $\g$ is spanned by elements of the form
    \begin{equation} \label{eq:span_codim1}
        [Y_{i_1}, \dots, Y_{i_k}], \quad \text{with $k \ge 1$.}
    \end{equation}
    The only case for an element as in  \eqref{eq:span_codim1} to be not in $\g'$ is when $k=1$ and $i_1 = m+1$. Moreover, since $[Y_{i_1}, \dots, Y_{i_{k-1}}, Y_{i_k}]=- [Y_{i_1}, \dots, Y_{i_k}, Y_{i_{k-1}}]$, we can further assume $i_k \neq m+1$ when $k \ge 2$. Therefore, since $\g'$ is one-codimensional, it is spanned by elements of the form
    \begin{equation} \label{eq:span_derived_codim1}
        [Y_{i_1}, \dots, Y_{i_k}], \quad \text{with $k \ge 1$ and $i_k \neq m+1$.}
    \end{equation}
    Since each element as in \eqref{eq:span_derived_codim1} belongs to $I(V \cap \g')$, we conclude that $\g' \subseteq I(V \cap \g')$.    
    \end{proof}

\begin{proposition} \label[proposition]{poly_codim1}
    Let $G$ be a metabelian Lie group with $\dim(G/G')=1$. Let $V$ be a bracket-generating polarization. Then for every $\lambda \in V^\perp\setminus\{0\}$, there exists $Y \in V \cap \g'$  such that $P^\lambda_{Y} \not\equiv 0$.
\end{proposition}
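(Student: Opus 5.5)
We argue by contradiction, using the ideal $I^\lambda$ from \cref{rem:abnormal_ideal} together with \cref{prop:ideal_codim1}. Fix $\lambda \in V^\perp\setminus\{0\}$ and suppose that $P^\lambda_Y \equiv 0$ for every $Y \in V \cap \g'$. By definition of $I^\lambda$, this means $V\cap \g' \subseteq I^\lambda$.

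By \cref{rem:abnormal_ideal}, $I^\lambda$ is an ideal of $\g$, so it contains the smallest ideal generated by $V\cap\g'$. Since $\dim(\g/\g')=1$ and $V$ is bracket-generating, \cref{prop:ideal_codim1} applies and gives
\[
\g' = I(V\cap \g') \subseteq I^\lambda .
\]

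Now we use that $\lambda \in V^\perp$. Evaluating at the identity, every $X\in V$ satisfies $P^\lambda_X(\id)=\lambda(X)=0$. For $Y\in\g'\subseteq I^\lambda$ we have $P^\lambda_Y\equiv 0$, and in particular $\lambda(Y)=P^\lambda_Y(\id)=0$. Hence $\lambda$ vanishes on $V+\g'$.

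As observed in the proof of \cref{prop:ideal_codim1}, bracket-generation gives $V+\g'=\g$: indeed $\Lie(V)\subseteq V+\g'$. Therefore $\lambda \equiv 0$ on $\g$, contradicting $\lambda\neq 0$. So some $Y\in V\cap\g'$ has $P^\lambda_Y\not\equiv 0$.

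There is no substantial obstacle. The only points requiring care are that $I^\lambda$ is an ideal, which is \cref{rem:abnormal_ideal}, and the evaluation at the identity, which uses $\Ad_\id=\mathrm{id}$. Note also that the argument never uses the metabelian hypothesis; only $\dim(\g/\g')=1$ and bracket-generation enter.
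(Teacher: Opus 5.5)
Your proposal is correct and follows essentially the same route as the paper: assume $P^\lambda_Y\equiv 0$ on $V\cap\g'$, use that $I^\lambda$ is an ideal (\cref{rem:abnormal_ideal}) together with \cref{prop:ideal_codim1} to get $\g'\subseteq I^\lambda$, evaluate at the identity to get $\lambda\in(\g')^\perp\cap V^\perp$, and conclude $\lambda=0$ from $V+\g'=\g$. Your observation that the metabelian hypothesis plays no role here is also accurate; the paper's argument does not use it either.
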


\begin{proof}
    Since $V$ is bracket-generating, we have that $\g = V + \g'$. Let $\lambda \in V^\perp$ and assume that $P^\lambda_{Y} \equiv 0$, for every $Y \in V\cap \g'$. Then, combining \cref{rem:abnormal_ideal} with \cref{prop:ideal_codim1}, we get that $P^\lambda_{Y} \equiv 0$ for every $Y \in \g'$. In particular, since $P^\lambda_{Y}(\id) = \lambda(Y)$, we obtain that $\lambda \in (\g')^\perp$.  We thus deduce that $\lambda = 0$.
\end{proof}


\begin{theorem} \label{thm:rank_one}
    Let $(G,V)$ be a metabelian, simply connected, polarized Lie group such that $\dim(G/G')=1$. Then the abnormal set $\abn(G,V)$ is either empty or it is contained in a subgroup of $\exp(V \cap \g')$ of dimension at most $\dim(G)-3$. In particular, the abnormal set $\abn(G,V)$ is $(\dim(G)-3)$-rectifiable and $(G,V)$ satisfies the Sard property.
\end{theorem}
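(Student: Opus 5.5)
The plan is to show that every abnormal curve has its $B$-component frozen at the origin, so that it lives entirely inside the abelian subgroup $G'$, and then to bound dimensions.

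\emph{Setup and choice of coordinates.} Since $V$ is bracket-generating, $V+\g'=\g$, so I can choose $X\in V\setminus\g'$ and take $B=\spn\{X\}$ in the mixed coordinates \eqref{eq:mixed_coordiantes}. Write $V=(V\cap\g')\oplus\spn\{X\}$. A horizontal control then splits as $u(t)=y(t)+\dot s(t)X$ with $y(t)\in V\cap\g'$, and the $B$-coordinate of $\gamma_u(t)$ is $b(t)=s(t)X$ with $s(0)=0$.

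\emph{Step 1: the $B$-coordinate is frozen.} Let $\gamma_u$ be $\lambda$-abnormal with $\lambda\in V^\perp\setminus\{0\}$.
\begin{itemize}
\item By \cref{poly_codim1} there is $Y\in V\cap\g'$ with $P^\lambda_Y\not\equiv0$.
\item By \cref{poly_invariance}, $P^\lambda_Y(a,sX)=f(s)$, where $f(s)\coloneq\lambda(e^{s\,\ad_X}Y)$ is a nonzero real-analytic function of one variable. Indeed, if $f\equiv 0$ then $P^\lambda_Y\equiv0$ everywhere, because it does not depend on $a$.
\item Hence the zero set of $f$ is discrete.
\item Abnormality gives $f(s(t))=0$ for all $t$. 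Since $s$ is continuous and $s(0)=0$, we get $s\equiv0$.
\end{itemize}
Therefore $u(t)\in V\cap\g'$ almost everywhere and $\gamma_u\subseteq G'$. Because $G'$ is abelian and $\exp\colon\g'\to G'$ is a diffeomorphism, \eqref{eq:curve_coordinates} (equivalently, a direct computation) gives $\gamma_u(t)=\exp(w(t))$ with $w(t)=\int_0^t y\in V\cap\g'$.

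\emph{Step 2: the remaining abnormality condition is linear.} With the curve inside $G'$, the conditions $P^\lambda_Z(\gamma_u(t))=0$ for $Z\in V\cap\g'$ are automatic. The only condition left is the one for $Z=X$. Since $[\g',\g']=0$, we have $\Ad_{\exp w}X=e^{\ad_w}X=X+[w,X]$. As $\lambda(X)=0$, the condition becomes
\[
\lambda([w(t),X])=0\quad\text{for all } t.
\]
This is linear in $w$. So the endpoints of $\lambda$-abnormal curves lie in $\exp(U_\lambda)$, where
\[
U_\lambda\coloneq\{w\in V\cap\g' : \lambda([w,X])=0\},
\]
and $\exp(U_\lambda)$ is a subgroup of $\exp(V\cap\g')$.

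\emph{Step 3: case split on $\operatorname{codim}V$, which I expect to be the delicate point.} Taking the union over all $\lambda$ can add up to $\operatorname{codim}V-1$ dimensions, so I will argue by cases.
\begin{itemize}
\item If $\operatorname{codim}V=0$, then $V^\perp=0$ and the abnormal set is empty.
\item If $\operatorname{codim}V\ge2$, then every endpoint lies in the single subgroup $\exp(V\cap\g')$. Its dimension is $\dim V-1\le\dim G-3$, and this subgroup is independent of $\lambda$.
\item If $\operatorname{codim}V=1$, then $V^\perp=\spn\{\lambda\}$ and there is a single subgroup $\exp(U_\lambda)$ to control. I claim that $w\mapsto\lambda([w,X])$ is a nonzero functional on $V\cap\g'$, so that $\dim U_\lambda=\dim V-2=\dim G-3$.
\end{itemize}
For the claim in the last case, suppose instead that $\lambda([V\cap\g',X])=0$. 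We also have $\lambda([V\cap\g',V\cap\g'])=0$, because $\g'$ is abelian, and $\lambda(V)=0$. Hence $\lambda$ vanishes on $V+[V,V]$. But $V+[V,V]$ strictly contains $V$: otherwise $V$ would be a subalgebra, contradicting $\Lie(V)=\g\ne V$. Since $\operatorname{codim}V=1$, this forces $V+[V,V]=\g$, so $\lambda=0$, a contradiction.

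In all cases the abnormal set lies in a subgroup of $\exp(V\cap\g')$, which is a submanifold of dimension at most $\dim G-3$. This gives both the rectifiability statement and the Sard property.
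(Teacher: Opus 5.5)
Your proof is correct and follows essentially the same route as the paper: you freeze the $B$-coordinate via \cref{poly_codim1} and \cref{poly_invariance}, land in $\exp(V\cap\g')$, and in the codimension-one case cut down to the proper subspace $\{w\in V\cap\g' : \lambda([w,X])=0\}$, which coincides with the paper's $W$. Your explicit computation $\Ad_{\exp w}X = X+[w,X]$ and your argument that this functional is nonzero (via $V+[V,V]=\g$) spell out details the paper only states briefly.
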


\begin{proof}
    Fix $B \subseteq V$ to be a ($1$-dimensional) vector subspace such that $B \oplus \g' =\g$ and consider the mixed coordinates $\g' \times B$ on $G$ as in \eqref{eq:mixed_coordiantes}.  Fix a $\lambda$-abnormal curve $\gamma_u(t) =(a(t),b(t))$ in mixed coordinates, for some $\lambda \in \mathbb{S}(V^\perp)$. From \cref{poly_codim1}, there exists $Y \in V \cap \g'$  such that $P^\lambda_{Y} \not\equiv 0$. From \cref{poly_invariance}, we have $P^\lambda_Y(a,b) = P^\lambda_Y(0,b)$ and we observe that $P^\lambda_Y(0,\cdot) \colon B \cong \R \to \R$ is a non-constant analytic function, therefore its level sets are totally disconnected. Since $\gamma_u$ is $\lambda$-abnormal, we must have $P^\lambda_{Y}(0,b(t))=0$, for every $t \in [0,1]$. Since $b(0)=0$, the previous discussion implies that $b(t)=0$, for every $t \in [0,1]$ and therefore $\gamma_u(t) \in G'$, for every $t \in [0,1]$.

    We recall that $G'$ is an abelian Lie subgroup. Therefore, since $\gamma_u$ is horizontal, we get that $\gamma_u \subseteq \exp(V \cap \g')$. Since $V + \g'= \g$ and $\dim(\g/\g')=1$ , we get that $\dim(\exp(V \cap \g'))=\dim(V)-1$. If $\dim(V)=\dim(\g)$ then the abnormal set is empty, since $V^\perp = \{0\}$. If $\dim(V) \le \dim(\g) - 2$, then $\dim(\exp(V \cap \g') = \dim(V)-1 \le \dim(G) - 3$. We assume now  that $\dim(V) = \dim(\g) - 1$. In this case, if $\gamma_u$ is $\lambda$-abnormal, necessarily $\ker(\lambda)=V$. 
  By \eqref{eq:abn-equivalence}, and being $\gamma_u \subseteq G'$, we get  that $\gamma_u \subseteq \exp(W)$ where
\begin{equation*}
    W \coloneq  \Set{w \in V \cap \g' \colon \text{$[X,w] \in V$ for every $X \in V$}}.
\end{equation*}
Since $V$ is bracket generating, we infer that $W$ is a proper subspace of $V \cap \g'$, and therefore, being $\dim(\g'\cap V)= \dim(G)-2$, we get $\dim(W) \le \dim(G)-3$.
\end{proof}

\subsection{Proof of \cref{theo1_intro}}

    Let $(G,V)$ be a simply connected, polarized Lie group such that $G$ is metabelian and $\dim(V)=2$. Since $V$ is bracket-generating, then $V + \g'=\g$ and therefore $\dim(G/G') \le 2$. If $\dim(G/G') = 0$, then $G$ cannot be solvable. The case $\dim(G/G') = 1$ has been addressed in 
    \cref{sec:one_codim}. 
    We then consider the case $\dim(G/G') = 2$, which implies $V \oplus \g'=\g$. We consider mixed coordinates $\g' \times B$ on $G$ as in \eqref{eq:mixed_coordiantes}, with $B=V$. 

    We fix a basis $V = \spn\{X_1,X_2\}$, and we define 
    \[
    Y \coloneq  [X_1,X_2], \quad \Lambda \coloneq  \mathbb{S}(\Set{X_1,X_2,[X_1,X_2]}^\perp) \subseteq \mathbb{S} (\g^*).
    \]
    We point out that if $V=\g$, then the abnormal set is empty. Otherwise, since $V$ is bracket-generating, we get $Y \notin V$. The only case where $\g = \spn\{X_1,X_2,Y\}$ and $\dim(\g/\g')=2$ is the sub-Riemannian Heisenberg group, for which it is well known that the abnormal set consists of the identity element only. In every other case, $\Lambda$ is non-empty and $\dim(\Lambda)=\dim(G)-4$. By \cref{prop:goh_2}, since we are in the rank $2$ case, the abnormal set of $(G,V)$ is given by
    \begin{align} \label{eq:abn_2}
        \abn(G,V)  & =  \bigcup_{\lambda \in \Lambda} \Set{\End(u) \colon \text{$P^\lambda_{X_i}(\gamma_u(t))=P^\lambda_{Y}(\gamma_u(t))=0$, for every $t \in [0,1]$, $i\in\{1,2\}$}} \nonumber \\
        &=\bigcup_{\lambda \in \Lambda} \Set{\End(u) \colon \text{$P^\lambda_Y(\gamma_u(t))=0$, for every $t \in [0,1]$}},
    \end{align}
where in the second equality we used that $P^\lambda_{X_i}(\gamma_u(0))=0$, since $\lambda\in \Lambda$, together with the fact that, in view of \eqref{eq:diff}, the time derivative of $P^\lambda_{X_i}(\gamma_u(t))$ is a multiple of $P^\lambda_Y(\gamma_u(t))$. Note also that $P_Y^\lambda$ is not identically zero (otherwise, by \cref{rem:abnormal_ideal}, $\lambda$ would be zero). 

Since $Y \in \g'$ and $G$ is metabelian, from \cref{poly_invariance} we get that $P^\lambda_Y\colon \g' \times B \to \R$ only depends on the $B$ component. Therefore, we will effectively consider it as a function $P^\lambda_Y\colon B \cong \R^2 \to \R$.

    The outline of the proof is the following: first, we construct a countable family of $\Ranexp$-definable maps defined on $\Ranexp$-definable sets of dimension at most $\dim(G)-3$ parameterizing endpoints of abnormal geodesics. Being $\Ranexp$ an o-minimal structure and using \cref{def_image_dimension}, we conclude by proving that the union of images of these maps contains $\abn(G,V)$.

\medskip
    \paragraph{\textbf{Step 1: Constructing a countable family of tame maps}}
Fix a norm $\lvert \cdot \rvert$ on $B$. For $\lambda \in \Lambda$ and $r \in \N$, we define the $\Ran$-definable sets $Z^\lambda_r \coloneq  \{ b \in B \colon \lvert b \rvert \le r, \, P^\lambda_Y(b)=0\}$ and
\begin{equation*}
  E_r \coloneq  \Set{ (\lambda,b) \colon b \in Z^\lambda_r}\subseteq  \Lambda \times B ,
\end{equation*}
together with the projection maps $p \colon E_r \to  \Lambda$ and $q \colon E_r \to B$. Since $P^\lambda_Y$ is not identically zero, we recall from \cref{def_graph} that  $\dim(Z^\lambda_r)\leq 1$. Since $0\in Z_r^\lambda$, the subset
\[
E_r^0 \coloneq \Set{(\lambda,0) \colon \lambda\in \Lambda} \subseteq \Lambda \times B
\]
is $\Ran$-definable and contained in $E_r$. Applying \cref{def_triviality} to the map $p\colon E_r\to  \Lambda$, compatibly with the definable subset $E_r^0$ (meaning that $E_r^0$ plays the role of $C_1$ in the statement of the theorem), there exist a finite disjoint partition
\[
\Lambda=\bigsqcup_{i\in I_r} \Lambda_{i,r},
\]
for some set $I_r$, into $\Ran$-definable subsets, $\Ran$-definable sets $F_{i,r}\subseteq B$, and $\Ran$-definable homeomorphisms
\[
\varphi_{i,r}\colon \Lambda_{i,r}\times F_{i,r}\to p^{-1}(\Lambda_{i,r})\subseteq E_r
\]
such that $\varphi_{i,r}(\lambda,b)\in p^{-1}(\lambda)$ for every $i\in I_r$ and $(\lambda,b)\in \Lambda_{i,r}\times F_{i,r}$, and such that there exists $\xi_{i,r}\in F_{i,r}$ with
\begin{equation} \label{eq:zero_exists} 
\varphi_{i,r}(\lambda,\xi_{i,r})=(\lambda,0),
\quad\text{for every }\lambda\in \Lambda_{i,r}.
\end{equation}
We point out that $q\circ \varphi_{i,r}(\lambda,\cdot): F_{i,r} \to Z^\lambda_r$ is a $\Ran$-definable homeomorphism, 
for each $\lambda \in \Lambda_{i,r}$. In particular, $F_{i,r}$ is bounded and $\dim(F_{i,r})\le 1$. Therefore, by \cref{def_graph}, the set $F_{i,r}$ admits a $\Ran$-definable geometric realization as a finite graph. Choosing this geometric realization with $\xi_{i,r}$ as a vertex, we get a cell decomposition
\begin{equation*}
    F_{i,r} = \left( \bigsqcup_{n \in N_{i,r}} \alpha^n_{i,r} \right) \sqcup \left( \bigsqcup_{m \in M_{i,r}} \beta^m_{i,r}((0,1)) \right),
\end{equation*}
for some sets $M_{i,r}$, $N_{i,r}$, some $\alpha^n_{i,r} \in B$ and some $\Ran$-definable homeomorphisms
\[
\beta^m_{i,r} \colon [0,1] \to B.
\]
More precisely, each $\beta^m_{i,r}$ is the restriction to $[0,1]$ of the realization map on a $1$-simplex.

Moreover, there exists $n_0\in N_{i,r}$ such that $\alpha^{n_0}_{i,r}=\xi_{i,r}$. By \eqref{eq:zero_exists}, we get that
\[
\varphi_{i,r}(\lambda,\alpha^{n_0}_{i,r})=(\lambda,0)
\qquad\text{for every }\lambda\in \Lambda_{i,r}.
\]

    For every $i \in I_r$, $\lambda \in \Lambda_{i,r}$ and $\mu = (m,\varepsilon) \in M_{i,r} \times \{\pm1\}$ we define the absolutely continuous curve with values in $Z^\lambda_r$
    \begin{equation*}
        \sigma^{\lambda,\mu}_{i,r} \colon [0,1] \to B, \quad t \mapsto \begin{cases}
        q \circ \varphi_{i,r}(\lambda, \beta^m_{i,r}(t)), & \text{if $\varepsilon = +1$}, \\
        q \circ \varphi_{i,r}(\lambda, \beta^m_{i,r}(1-t)), & \text{if $\varepsilon = -1$},
        \end{cases}
    \end{equation*}
    and we denote by $\gamma^{\lambda,\mu}_{i,r}$ the multiplicative integral of $\sigma^{\lambda,\mu}_{i,r}$. We also set $g^{\lambda,\mu}_{i,r} \coloneq   \gamma^{\lambda,\mu}_{i,r}(1)$.
    
    We point out that, for every $\mu \in M_{i,r} \times \{\pm 1\}$, the map
    \begin{equation*}
        \sigma^\mu_{i,r} \colon \Lambda_{i,r} \times [0,1] \to B, \quad (\lambda,t) \mapsto \sigma^{\lambda,\mu}_{i,r}(t)
    \end{equation*}
    is a $\Ran$-definable  continuous map with bounded image. By \cref{lem:ac} for all $\lambda \in \Lambda$, $\sigma^{\mu}_{i,r}(\lambda,\cdot):[0,1]\to B$ is absolutely continuous. The assumptions of \cref{def_integral} are thus satisfied, and then the map
\begin{equation*}
    \Phi^{-1}\circ \gamma^m_{i,r} \colon \Lambda_{i,r} \times [0,1] \to \g' \times B, \quad (\lambda,t) \mapsto \Phi^{-1}( \gamma^\mu_{i,r}(t))
\end{equation*}
is $\Ranexp$-definable. In particular, evaluating at $t=1$, the map $\lambda \mapsto g^{\lambda,\mu}_{i,r}$ is $\Ranexp$-definable. Moreover, the group law is real-analytic in mixed coordinates, being obtained by conjugating the analytic multiplication map on $G$ by the analytic diffeomorphism $\Phi$. 
Therefore, for every $r,\ell \in \N$, $i \in I_r$, and $j \colon \{1,\dots,\ell\} \to M_{i,r} \times \{\pm 1\}$, the map
\begin{equation*}
    T_{i,r,\ell,j} \colon \Lambda_{i,r} \times [0,1] \to G,
    \quad
    (\lambda,t) \mapsto g^{\lambda,j(1)}_{i,r}\cdots g^{\lambda,j(\ell-1)}_{i,r}\gamma^{\lambda,j(\ell)}_{i,r}(t)
\end{equation*}
is $\Ranexp$-definable in mixed coordinates.
The first step of the proof is concluded by observing that $\dim(\Lambda_{i,r} \times [0,1]) \le \dim(\Lambda) + 1 = \dim(G)-3$.    
\medskip
\paragraph{\textbf{Step 2: Proving that the family of maps covers the abnormal set}}
    
    We fix $g \in \abn(G,V)$ and we claim that $g$ is in the image of some map $T_{i,r,\ell,j}$ constructed in Step 1. Since $g$ is in the abnormal set, there exists $\lambda \in \Lambda$ such that $g = \gamma_u(1)$, for some $\lambda$-abnormal curve $\gamma_u$, with $u \in L^1([0,1],V)$.
    We write $\gamma_u(t) =(a(t),b(t))$ in exponential coordinates as in \eqref{eq:curve_coordinates}, for some functions $a,b$.
    We observe that, since $B=V$ and in view of \eqref{eq:curve_coordinates}, the curve $\gamma_u$ is the multiplicative integral of $b$. 

Let $r \in \N$ and $i \in I_r$ such that $\lambda \in \Lambda_{i,r}$ and $b(t) \in Z^\lambda_r$ for all $t\in [0,1]$.

    Since $b(1)$ belongs to the connected component of $Z^\lambda_r$
 containing $0 \in B$, there exists a compatible sequence of curves $\sigma^{\lambda,\mu_1}_{i,r},\dots, \sigma^{\lambda,\mu_k}_{i,r}$, and $t \in [0,1]$, such that 
 \begin{equation*}
 \begin{cases}
     \sigma^{\lambda,\mu_1}_{i,r}(0)=0, \\ 
     \sigma^{\lambda,\mu_k}_{i,r}(t)=b(1),
 \end{cases}
 \end{equation*}
 in particular the curve $\eta : [0,1]\to B$ defined by the concatenation 
 \[
\eta \coloneq  \sigma^{\lambda,\mu_1}_{i,r} * \cdots * \sigma^{\lambda,\mu_{k-1}}_{i,r} * (\sigma^{\lambda,\mu_k}_{i,r} \circ \delta_t),
 \]
 where $\sigma^{\lambda,\mu_k}_{i,r} \circ \delta_t \colon s \mapsto \sigma^{\lambda,\mu_k}_{i,r}(ts)$, satisfies $\eta(1)=b(1)$ and $\eta ([0,1]) \subseteq  Z^\lambda_r$.
 
 We recall from \cref{def_graph} that $Z^\lambda_r$ admits a geometric realization as finite graph and therefore the fundamental group of each connected component is a finitely-generated free group. In particular, every homotopy class of loops based at $0$ contains a representative given by a finite concatenation of compatible curves among the arcs $\sigma^{\lambda,\mu}_{i,r}$, with $\mu\in M_{i,r} \times \{\pm1\}$. Let
\begin{equation*}
\rho \coloneq q  \sigma^{\lambda,\mu_{k+1}}_{i,r} * \cdots * \sigma^{\lambda,\mu_{k+h}}_{i,r},  \quad \rho([0,1])\subseteq Z^\lambda_r
\end{equation*}
be a closed curve based at $0 \in B$ and homotopically equivalent to $b * \overline{\eta}$. 
Then $b$ is homotopically equivalent to $\rho*\eta$ (in $Z^\lambda_r$) and, from \cref{prop:homotopy}, the end-point of $\gamma_u$, which is the multiplicative integral of $b$, is the end-point of the multiplicative integral of $\rho*\eta$. In view of \cref{rem:path2}, that end-point is
 \begin{equation*}
     g^{\lambda,\mu_{k+1}}_{i,r} \cdots g^{\lambda,\mu_{k+h}}_{i,r} g^{\lambda,\mu_1}_{i,r} \cdots g^{\lambda,\mu_{k-1}}_{i,r}  \gamma^{\lambda,\mu_k}_{i,r}(t) = T_{i,r,\ell,j}(\lambda,t),
 \end{equation*}
 for $\ell = k+h$ and some $j \colon \{1,\dots,\ell\} \to M_{i,r} \times \{\pm1\}$. We then proved that $\gamma_u(1)$ is in the imagine of some map $T_{i,r,\ell,j}$. We thus proved that
\[
\abn(G,V)\subseteq \bigcup_{r,\ell\in\N}\ \bigcup_{i\in I_r}\ \bigcup_{j:\{1,\dots,\ell\}\to M_{i,r} \times \{\pm1\}}
T_{i,r,\ell,j}(\Lambda_{i,r}\times[0,1]).
\]

By Step~1, for every $r,\ell\in\N$, $i\in I_r$, and $j:\{1,\dots,\ell\}\to M_{i,r} \times \{\pm1\}$, the set $\Lambda_{i,r}\times[0,1]$ is $\Ranexp$-definable and $\dim(\Lambda_{i,r}\times[0,1])\le \dim(G)-3$. Since $\Phi^{-1}\circ T_{i,r,\ell,j}$ is $\Ranexp$-definable in mixed coordinates, \cref{def_image_dimension} yields
\[
\dim\bigl(T_{i,r,\ell,j}( \Lambda_{i,r}\times[0,1])\bigr)\le \dim(G)-3.
\]

Therefore, each image $T_{i,r,\ell,j}(\Lambda_{i,r}\times[0,1])$ is the finite union of $C^1$ submanifolds of dimension at most $(\dim(G)-3)$. Finally, the above family is countable, hence $\abn(G,V)$ is $(\dim(G)-3)$-rectifiable. This concludes the proof. \hfill \qedsymbol

\subsection{Proof of \cref{thm:rank_two_intro}}

Set $I \coloneq  I(V \cap \g')$ and consider the normal integral subgroup $N \le G$ generated by $\exp(I)$. From \cref{rem:integral}, we get that $N$ is a Lie subgroup.

    A Goh-abnormal curve $\gamma_u$ is in particular $\lambda$-abnormal for some $\lambda \in \mathbb{S}(V^\perp)$. Therefore, $\gamma$ is contained in at least one of the following sets:
    \begin{align*}
        A_1 &\coloneq  \Set{\End(u) \colon \text{$\gamma_u$ Goh-abnormal and $\lambda$-abnormal for some $\lambda \in \mathbb{S}(V^\perp) \cap \mathbb{S}( I^\perp)$}}, \\
        A_2 &\coloneq  \Set{\End(u) \colon \text{$\gamma_u$ Goh-abnormal and $\lambda$-abnormal for some $\lambda \in \mathbb{S}(V^\perp) \setminus \mathbb{S}(I^\perp)$}}.
    \end{align*}
    Therefore,  $\abn^{Goh}(G,V) \subseteq A_1 \cup A_2$ and it is sufficient to prove that both $A_1$ and $A_2$ are $(\dim(G)-3)$-rectifiable.
    \medskip
\paragraph{\textbf{Case 1}}
Let $g \in A_1$. Then $g = \End(u)$ for some Goh-abnormal curve $\gamma_u$ that is $\lambda$-abnormal with $\lambda \in \mathbb{S}(V^\perp) \cap \mathbb{S}(I^\perp)$. In view of \cref{rem:quotient}, the curve $\gamma_u$ is abnormal when composed with the quotient map $\pi \colon G \to G/N$.
     In particular $\pi(g) \in \abn(G/N,V/I)$ and therefore $A_1 \subseteq \pi^{-1}\left(\abn(G/N,V/I)\right)$. We observe that $\dim(V/I)=\dim(V) - \dim(V \cap \g')=2$, since $V + \g' = \g$ and $\dim(\g/\g')=2$. 
    From \cref{theo1_intro}, we get that $\abn(G/N,V/I)$ is $(\dim(G/N) - 3)$-rectifiable. 
    We conclude that $A_1$ is $(\dim(G) - 3)$-rectifiable.

\medskip
\paragraph{\textbf{Case 2}} 
Let $g \in A_2$. Then $g = \End(u)$ for some Goh-abnormal curve $\gamma_u$ that is $\lambda$-abnormal with $\lambda \in \mathbb{S}(V^\perp) \setminus \mathbb{S}(I^\perp)$.
     Fix $B = \spn\{X_1,X_2\} \subseteq V$ to be a vector subspace such that $B \oplus \g' =\g$. Consider the mixed coordinates $\g' \times B$ on $G$ as in \eqref{eq:mixed_coordiantes} and write the curve $\gamma_u(t) =(a(t),b(t))$ as in \eqref{eq:curve_coordinates}. By \cref{rem:abnormal_ideal}, together with the fact that $\lambda \notin I(V \cap \g')^\perp$, there exists $Y \in V \cap \g'$ such that $P^\lambda_Y \not\equiv 0$. From \cref{poly_invariance}, we have $P^\lambda_Y(a,b) = P^\lambda_Y(0,b)$ and we observe that $P_Y^\lambda(0,\cdot) \colon B \cong \R^2 \to \R$ is a non-constant analytic function, therefore the singular points of its zero level set are totally disconnected. Since $\gamma_u$ is Goh-abnormal, then $P^\lambda_Y(0,b(t))=0$, moreover $0=P^\lambda_{[X_i,Y]}(0,b(t)) = X_i P^\lambda_Y(0,b(t))$ for $i=1,2$. Therefore, $b(t)$ is a singular point of the zero level set of $P_Y^\lambda(0,\cdot)$, for every $t \in [0,1]$. 
    
    From the previous discussion we get that $b(t)=0$, and therefore $\gamma_u \subseteq G'$. Moreover, we recall that $G'$ is an abelian Lie subgroup. Therefore, since $\gamma_u$ is horizontal, we get that $\gamma_u \subseteq \exp(V \cap \g')$. We thus proved that $A_2 \subseteq \exp(V \cap \g')$.
     
    Since $V + \g'= \g$ and $\dim(\g/\g')=2$ , we get that $\dim(V \cap \g')=\dim(V)-2$. If $\dim(V)=\dim(\g)$ then the abnormal set is empty, since $V^\perp = \{0\}$. If $\dim(V) \le \dim(\g) - 1$, then $\dim(V \cap \g') = \dim(V)-2 \le \dim(G) - 3$ and we get that $A_2$ is  $(\dim(G)-3)$-rectifiable. \hfill \qedsymbol

\subsection{Proof of \cref{prop:free-metabelian-sharpness}}

First, we define the polarized groups in the statement of \cref{prop:free-metabelian-sharpness}. For $s \ge 2$, we set $I_s \coloneq  \{(\alpha,\beta)\in\mathbb N^2:\alpha + \beta \leq s-2\}$.

\begin{definition}
     The free-metabelian Lie group $M_{2,s}$ of rank $2$ and step $s \ge 2$ is the simply connected Lie group whose Lie algebra $\mathfrak{m}_{2,s}$ admits a basis 
     $\{X_1,X_2\} \cup \{Y_{(\alpha,\beta)}\}_{(\alpha,\beta) \in I_s}$ for which the only non-trivial bracket relations between its elements are
     \begin{equation} \label{eq:relation_free}
         [X_1,X_2]=Y \coloneq  Y_{0,0}, \quad [X_1,Y_{\alpha,\beta}]=Y_{\alpha+1,\beta}, \quad [X_2,Y_{\alpha,\beta}] =Y_{\alpha,\beta+1}, \quad \text{if $\alpha+\beta < s-2$.}
     \end{equation}
     Moreover, $V \coloneq  \spn\set{X_1,X_2}$ is the first layer of a stratification of $\mathfrak{m_{2,s}}$. In particular, $(M_{2,s},V)$ is a stratified group.
\end{definition}

In the latter definition, the Lie algebra $\mathfrak{m}_{2,s}$ is indeed a metabelian Lie algebra (see \cite{MR159847,MR2905023}), it has nilpotency step $s$, and its dimension is \begin{equation*}
   \dim(\mathfrak{m}_{2,s}) = \dim(M_{2,s}) = \frac{s(s-1)}{2} +2.
  \end{equation*} 

Since the Lie group $M_{2,2}$ corresponds to the Heisenberg group, where the statement is well-known, we fix $s\geq 3$. We write abnormal polynomials in mixed coordinates as in \cref{sec:mixed}, by choosing $B$ as the first stratum of $\mathfrak{m}_{2,s}$, i.e., $B=\spn\{X_1,X_2\}$.
For $b=x_1X_1+x_2X_2$ and $\lambda\in\{X_1,X_2,Y\}^{\perp}$, we have
\[
 P_Y^\lambda(0,b)
 =\lambda\bigl(e^{\ad_b}Y\bigr)
 =\sum_{(\alpha,\beta)\in I_n}
   \frac{\lambda(Y_{\alpha,\beta})}{\alpha!\beta!}x_1^\alpha x_2^\beta.
\]
Thus, the functions $P_Y^\lambda(0,\cdot) \colon \R^2 \to \R$ are precisely the nonzero
polynomials of degree at most $s-2$ that vanish at the origin. By the characterization of the abnormal set in rank two established in \eqref{eq:abn_2}, the multiplicative integral of every planar curve contained in the zero set of one of these polynomials, for $\lambda \neq 0$, is abnormal.

Next, we need a preliminary lemma about the multiplicative integrals of planar curves written as a graph in free-metabelian Lie groups of rank $2$.

\begin{lemma} \label{multiplicative_graph}
    Let $f \colon [0,1] \to \R$ be absolutely continuous, with $f(0)=0$. Then, the mixed coordinates $(a(t),b(t))$ of the multiplicative integral $\gamma$ of $\sigma(t) \coloneq  tX_1 + f(t)X_2$ are given by
    \begin{equation*}
        a(t)= \sum_{(\alpha,\beta) \in I_s} \frac{Y_{\alpha,\beta}}{(\alpha+\beta+2)\alpha!\beta!}\int^t_0 \bigg(x^{\alpha+1}f(x)^\beta f'(x) - x^\alpha f(x)^{\beta+1}\bigg)\, \de x , \quad b(t) = \sigma(t).
    \end{equation*}
\end{lemma}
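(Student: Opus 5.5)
The plan is to apply formula \eqref{eq:curve_coordinates} directly. Here $B=V=\spn\{X_1,X_2\}$, and the multiplicative integral of $\sigma$ has control $u=\dot\sigma = X_1 + f'(t)X_2$. Since $\sigma(0)=0$, we get immediately $b(t)=\int_0^t u = \sigma(t)$. It remains to compute
\[
a(t)=\int_0^t \rho(\ad_{b(\tau)})u(\tau)\,\de\tau ,
\]
where $b(\tau)=\tau X_1+f(\tau)X_2$ and $u(\tau)=X_1+f'(\tau)X_2$.

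First, we evaluate $\rho(\ad_b)u$. Note that $\rho(0)=0$, so only powers $k\ge 1$ contribute, and
\[
\ad_b u = [\tau X_1+fX_2,\,X_1+f'X_2] = (\tau f' - f)\,Y .
\]
For $k\ge 2$ we have $\ad_b^{k} u=\ad_b^{k-1}\bigl((\tau f'-f)Y\bigr)$. The operators $\ad_{X_1}$ and $\ad_{X_2}$ commute on $\g'=\spn\{Y_{\alpha,\beta}\}$; this follows from Jacobi together with $[Y,\cdot]=0$ on $\g'$, and is also visible directly from \eqref{eq:relation_free}. Hence, by the binomial expansion,
\[
\ad_b^{k-1}Y=\sum_{\alpha+\beta=k-1}\binom{k-1}{\alpha}\tau^\alpha f^\beta\, Y_{\alpha,\beta},
\]
with the convention that $Y_{\alpha,\beta}=0$ when $\alpha+\beta>s-2$. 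Substituting the coefficients $k/(k+1)!$ of $\rho$ and writing $k=\alpha+\beta+1$ gives
\[
\rho(\ad_b)u=\sum_{(\alpha,\beta)\in I_s}\frac{(\alpha+\beta+1)}{(\alpha+\beta+2)!}\cdot\frac{(\alpha+\beta)!}{\alpha!\beta!}\,\tau^\alpha f^\beta(\tau f'-f)\,Y_{\alpha,\beta}.
\]
The scalar factor simplifies to $1/\bigl((\alpha+\beta+2)\,\alpha!\,\beta!\bigr)$, since $(\alpha+\beta+1)(\alpha+\beta)!/(\alpha+\beta+2)!=1/(\alpha+\beta+2)$. Moreover, $\tau^\alpha f^\beta(\tau f'-f)=\tau^{\alpha+1}f^\beta f'-\tau^\alpha f^{\beta+1}$. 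Integrating from $0$ to $t$ then yields exactly the claimed formula for $a(t)$. The integrals are well defined because $f$ is absolutely continuous, hence bounded, and $f'\in L^1$.

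The only delicate point is justifying the binomial expansion, namely the commutation $[\ad_{X_1},\ad_{X_2}]=\ad_Y=0$ on $\g'$. This holds because $\mathfrak m_{2,s}$ is metabelian, so $Y\in\g'$ acts trivially on the abelian ideal $\g'$. The rest is bookkeeping with the series of $\rho$, which truncates by nilpotency.
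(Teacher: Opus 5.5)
Your proposal is correct and follows essentially the same route as the paper: plug the control $X_1+f'X_2$ into \eqref{eq:curve_coordinates}, reduce $\ad_b u$ to $(\tau f'-f)Y$, expand $\ad_b^{k-1}Y$ binomially via \eqref{eq:relation_free}, and reindex with $k=\alpha+\beta+1$. The only difference is that you spell out two points the paper leaves implicit: why the binomial expansion is legitimate ($\ad_{X_1}$ and $\ad_{X_2}$ commute on the abelian ideal $\g'$), and how the coefficient simplifies to $1/\bigl((\alpha+\beta+2)\,\alpha!\,\beta!\bigr)$.
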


\begin{proof}
    Clearly, the control of $\gamma$ is $X_1+f'(x)X_2$ and  $b(t)=\sigma(t)$. We compute $a(t)$ using \eqref{eq:curve_coordinates}:
    \begin{align*}
        a(t) &= \int^t_0 \sum_{k=0}^{s-1} \frac{k}{(k+1)!} \ad^k_{xX_1+f(x)X_2}(X_1 + f'(x)X_2) \de x, \\
        &= \int^t_0 \sum_{k=1}^{s-1} \frac{k}{(k+1)!} (xf'(x)-f(x)) \ad^{k-1}_{xX_1+f(x)X_2}(Y) \de x, \\
        &= \int^t_0 \sum_{k=1}^{s-1} \sum_{j=0}^{k-1} \frac{k}{(k+1)!} \binom{k-1}{j} (xf'(x)-f(x))x^jf(x)^{k-1-j}Y_{j,k-1-j} \de x, \\
        &= \sum_{(\alpha,\beta) \in I_s} \frac{Y_{\alpha,\beta}}{(\alpha+\beta+2)\alpha!\beta!}\int^t_0 \bigg(x^{\alpha+1}f(x)^\beta f'(x) - x^\alpha f(x)^{\beta+1}\bigg)\, \de x,
    \end{align*}
    where we used the relations \eqref{eq:relation_free} and the change of indices $\alpha = j$, $\beta = k-1-j$.
\end{proof}

We set $\lambda_0\in\{X_1,X_2,Y\}^{\perp}$ to be the covector such that 
\begin{equation*}
    P^{\lambda_0}_Y(x_1,x_2) = (1-x_1^{s-3})x_2-x_1,
\end{equation*}
and $\Lambda \coloneq  \{X_1,X_2,Y,Y_{0,1}\}^{\perp}$, so that the $x_2$ coefficient of $P^\lambda_Y$ is zero, for every $\lambda \in \Lambda$. In what follows, we use that $P^{\lambda_0}_Y$ is irreducible, as explained in the following remark.
\begin{remark} \label{rem:irreducible}
    We claim that the polynomial $(1-x_1^{s-3})x_2-x_1$ is irreducible in $\R[x_1,x_2]$. Indeed, the claim is obvious if $s=3$. If $s >3$, since $(1-x_1^{s-3})x_2-x_1$ has degree one in $x_2$, it is irreducible in $\R(x_1)[x_2]$. Since $\gcd(1-x_1^{s-3},x_1)=1$, then it is also primitive in $\R[x_1][x_2]$. From Gauss's Lemma \cite[Exercise~3.4]{MR1322960}, we get that $(1-x_1^{s-3})x_2-x_1$ is irreducible in $\R[x_1,x_2]$.
\end{remark}

We define the smooth map
\begin{equation*}
    F \colon \R^2 \times \Lambda \to \R, \quad (x_1,x_2,\lambda) \mapsto P^{\lambda_0+\lambda}_Y(x_1,x_2),
\end{equation*}
and we observe that the function
\begin{equation*}
    f_0 \colon [0,\tfrac{1}{2}] \to \R, \quad x \mapsto \frac{x}{1 - x^{s-3}},
\end{equation*}
satisfies $F(x,f_0(x),0)=0$, for every $x \in [0,\tfrac{1}{2}]$. Moreover, $\partial_{x_2}F(x,f_0(x),0)=1-x^{s-3}\neq 0$, for every $x \in [0,\tfrac{1}{2}]$. A standard application of the implicit function theorem shows that there exists a neighborhood $U \subseteq \Lambda$ of zero such that, for every $\lambda \in U$, there exists a unique (smooth) function $f_\lambda \colon [0,\tfrac{1}{2}] \to \R$, with $f_\lambda(0)=0$, such that
\begin{equation} \label{eq:def_f_lambda}
    F(x,f_\lambda(x),\lambda)=0, \quad \text{for every $x \in [0,\tfrac{1}{2}]$}.
\end{equation}
Since $F$ is smooth, then $\lambda \mapsto f_\lambda(x)$ is also smooth, for every $x \in [0,\tfrac{1}{2}]$. 

For $\lambda \in U$, let $\gamma_\lambda \colon [0,\tfrac{1}{2}] \to M_{2,s}$ denote the multiplicative integral of $t \mapsto tX_1 + f_\lambda(t)X_2$. Since by construction $P^{\lambda_0+\lambda}(0,tX_1+f_\lambda(t)X_2)=0$, we get that $\gamma_\lambda$ is $(\lambda_0+\lambda)$-abnormal. Therefore the smooth map
\begin{equation*}
    \Psi \colon (0,\tfrac{1}{2}) \times U \to M_{2,s}, \quad (t,\lambda) \mapsto \gamma_\lambda(t),
\end{equation*}
is valued in $\abn(M_{2,s},V)$. Since $\dim((0,\tfrac{1}{2})\times \Lambda) = \dim(M_{2,s})-3$, it is sufficient to prove that $\de\Psi_{(t^*,0)}$ is injective,
for some $t^* \in (0,\tfrac{1}{2})$. We write $\Psi$ in mixed coordinates using \cref{multiplicative_graph}:
\begin{equation*}
    \Psi(t,\lambda)= \left(\sum_{(\alpha,\beta) \in I_s} c_{\alpha,\beta}Y_{\alpha,\beta}\int^t_0 \bigg(x^{\alpha+1}f_\lambda(x)^\beta f_\lambda'(x) - x^\alpha f_\lambda(x)^{\beta+1}\bigg)\, \de x, tX_1 + f_\lambda(t)X_2\right),
\end{equation*}
with $c_{\alpha,\beta} \neq 0$, for every $(\alpha,\beta) \in I_s$.

Next, we fix $t^* \in (0,\tfrac{1}{2})$ and $(\tau,\mu)$ to be in the kernel of $\de\Psi_{(t^*,0)}$. In particular, since $\mu \in \Lambda$, the $x_2$ coefficient of $P^\mu_Y$ is zero. We set
\begin{equation*}
    h(x) \coloneq  \left.\frac{\de}{\de\varepsilon}\right|_{\varepsilon=0}
 f_{\varepsilon \mu}(x).
\end{equation*}
Differentiating the $X_1$ component of $\Psi(t,\lambda)$ along $(\tau,\mu)$ at $(t^*,0)$, we get $\tau=0$. Doing the same for the $X_2$ component we get $h(t^*)=0$. From \eqref{eq:def_f_lambda}, we also get,
\begin{align}
    0 &= \left.\frac{\de}{\de\varepsilon}\right|_{\varepsilon=0} F(x,f_{\varepsilon\mu}(x),\varepsilon\mu) \notag \\ 
    &= \left.\frac{\de}{\de\varepsilon}\right|_{\varepsilon=0} \left[(1-x^{s-3})f_{\varepsilon\mu}(x) -x + P^{\varepsilon\mu}_Y(x,f_{\varepsilon\mu}(x))\right] \notag \\ 
    &= (1-x^{s-3})h(x) + P^\mu_Y(x,f_0(x)), \quad \text{for every $x \in (0,\tfrac{1}{2})$.}\label{eq:diff_F}
\end{align}
Differentiating the $Y_{\alpha,\beta}$ component of $\Psi(t,\lambda)$ we thus get
\begin{align}
    0 &= \left.\frac{\de}{\de\varepsilon}\right|_{\varepsilon=0} \int^{t^*}_0 \bigg(x^{\alpha+1}f_{\varepsilon\mu}(x)^\beta f_{\varepsilon\mu}'(x) - x^\alpha f_{\varepsilon\mu}(x)^{\beta+1}\bigg)\, \de x, \notag \\
    &= \left.\frac{\de}{\de\varepsilon}\right|_{\varepsilon=0} \bigg(\frac{1}{\beta+1} x^{\alpha+1}f_{\varepsilon\mu}(x)^{\beta +1}\bigg|^{t^*}_0 - \frac{\alpha+\beta+2}{\beta+1} \int^{t^*}_0 x^\alpha f_{\varepsilon\mu}(x)^{\beta+1}\, \de x \bigg), \notag \\
    &= -(\alpha+\beta+2)\int^{t^*}_0 x^\alpha f_0(x)^{\beta}h(x)\, \de x, \quad \text{for every $(\alpha,\beta) \in I_s$.} \label{eq:diff_Y}
\end{align}
where we first integrated the first term by parts and then used that $h(t^*)=0$.

Since $P^\mu_Y$ is a linear combination of monomials $x_1^\alpha x_2^\beta$, with $(\alpha,\beta) \in I_s$, from \eqref{eq:diff_Y} we get
\begin{equation*}
    0 = \int^{t^*}_0 P^\mu_Y(x,f_0(x))h(x) \, \de x \stackrel{\eqref{eq:diff_F}}{=} \int^{t^*}_0 (x^{s-3}-1)h(x)^2 \, \de x.
\end{equation*}
 If $s>3$, then $(x^{s-3}-1) < 0$ on $(0,t^*)$, we get that $h(x) \equiv 0$ on $[0,t^*]$, and from \eqref{eq:diff_F}, we infer that $P^\mu_Y(x,f_0(x)) \equiv 0$ on $ [0,t^*]$. If $s=3$, the latter directly follows from \eqref{eq:diff_F}. We thus proved that $P^\mu_Y=0$ in a subset of maximal dimension of the zero-level set of $P^{\lambda_0}_Y$, which is irreducible thanks to  \cref{rem:irreducible}. Therefore, from the real-Nullstellensatz \cite[Theorem~4.5.1]{MR1659509}, we get that $P^{\lambda_0}_Y$ divides $P^\mu_Y$.

Since $\deg(P^{\lambda_0}_Y)=s-2$ and $\deg(P^\mu_Y)\le s-2$, we get that $P^\mu_Y = c P^{\lambda_0}_Y$, for some $c \in \R$. We point out that the $x_2$ coefficient of $P^\mu_Y$ is zero while the $x_2$ coefficient of $P^{\lambda_0}_Y$ is not. We infer that $\mu=0$ and therefore $\de\Psi_{(t^*,0)}$ is injective. 

The map $\Psi$ is a smooth embedding from a neighborhood of $(t^*,0)$.
Its image is therefore a smooth submanifold inside
$\abn(M_{2,s},V)$ of dimension $\dim((0,\tfrac{1}{2})\times \Lambda) = \dim(M_{2,s})-3$. \hfill \qedsymbol

\bibliographystyle{alpha}	
\bibliography{biblio}
		
\end{document}